\newtheorem{theorem}{Theorem}[section]
\newtheorem*{theorem*}{Theorem}
\newtheorem{lemma}{Lemma}[section]
\newtheorem{prop}{Proposition}[section]
\newtheorem{cor}{Corollary}[section]
\newtheorem{problem}{Problem}
\DeclareMathOperator{\dv}{div}
\DeclareMathOperator{\real}{Re}
\DeclareMathOperator{\imag}{Im}
\DeclareMathAccent{\mpetito}{\mathalpha}{operators}{23}
\newcommand{\ds}{\,{\rm d}s}
\newcommand{\dy}{\,{\rm d}y}
\newcommand{\forallt}{\qquad\text{for all }}
\newcommand{\fct}[4]{\arraycolsep=1.4pt\begin{array}{rcl}{#1}&\longrightarrow&{#2}\\{#3}&\longmapsto&{#4}\end{array}}
\providecommand{\keywords}[1]
{
  \small	
  \textbf{\textit{Keywords---}} #1
}
\title{Reconstruction of obstacles in a Stokes flow as a shape-from-moments problem}
\author{Alexandre Munnier\footnote{\href{mailto:me@somewhere.com}{alexandre.munnier@univ-lorraine.fr}}}
\affil{Université de Lorraine, CNRS, Inria, IECL, F-54000 Nancy, France}
\begin{document}
%
\maketitle
\begin{abstract}
We address the classical inverse problem of recovering the position and shape of obstacles immersed in a planar Stokes flow 
using boundary measurements. We prove that this problem can be transformed into a shape-from-moments problem to which 
ad hoc reconstruction methods can be applied. The effectiveness of this approach is confirmed by numerical tests that show significant improvements over those available in the literature to date. 
\end{abstract}
\keywords{Geometric inverse problem, Stokes equations, non-primitive variables, shape-from-moments problem, biharmonic single-layer potential, Prony's system, partial balayage.}
\section{Introduction}
\label{SEC:1}
A shape-from-moments problem is a geometric inverse problem consisting in recovering the shape of an unknown (possibly 
multi-connected) domain $\mathcal O$ 
from a finite section of its complex moments:
$$\int_{\mathcal O} \overline{z^m} z^n\, {\rm d}m(z)\qquad (n,m\in\mathbb N).$$
In some cases, only the harmonic moments (i.e. for which $m=0$ in the identity above) are available. 
The 2D inverse gravimetric problem is inherently a shape-from-moments problem and can therefore be handled with tools developed for 
this type of problem as it is explained in the recent article \cite{Gerber-Roth:2023aa}. 
More surprisingly, the geometric Calder{\'o}n inverse problem has also been shown in \cite{Munnier:2018aa} to be equivalent to a shape-from-moments problem, leading to an original and efficient reconstruction method.
\par
In this work, we are going to show that the problem of detecting (and reconstructing) obstacles immersed in a Stokes flow 
from boundary measurements can be seen (and dealt with) as a shape-from-moments problem as well. 
Let us go into   more detail and start by making precise the
geometric parameters and the problem under consideration:
\par
Let $\varOmega$ and $\mathcal O_j$ ($j=1,\ldots,N$) be open Jordan domains of class  $\mathcal C^{1,1}$ such 
that $\overline{\mathcal O_j}\subset \varOmega$ and $\overline{\mathcal O_j}\cap\overline{\mathcal O_k}=
\varnothing$ for every indices $j,k=1,\ldots,N$, $j\neq k$. The boundaries of $\varOmega$ and $\mathcal O_j$ are denoted by  $\varGamma_0$ 
and $\varGamma_j$  respectively and we also denote:
$$\mathcal O=\cup_{j=1}^N\mathcal O_j\qquad\text{and}\qquad\varGamma=\cup_{j=1}^N\varGamma_j.$$
 The unit normal vector field $n$ defined on $\varGamma$ and on $\varGamma_0$  is always 
assumed to be pointing towards the interior of the domain enclosed by the Jordan curves (see Fig.~\ref{fig2}).
\begin{figure}[ht]
\centerline{\input{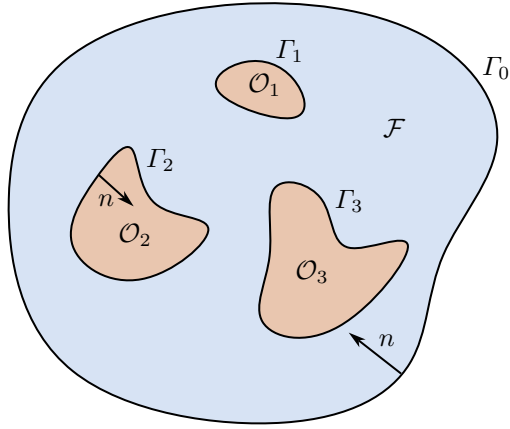}}
\caption{\label{fig2}$\mathcal F$ and $\mathcal O_j$ are respectively the fluid and obstacle domains, and $\varOmega$ is the union 
of $\mathcal F$ and  $\overline{\mathcal O}_j$ ($j=1,\ldots,N$). The boundaries of the domains are $\mathcal C^{1,1}$ Jordan curves. At every point of the boundaries, the unit normal vector $n$ is directed 
towards the interior of the domain enclosed by the curve.}
\end{figure}
We assume that the domain $\mathcal F=
\varOmega\setminus\cup_{j=1}^N \overline{\mathcal O}_j$ is filled with a fluid whose flow is governed by the Stokes equations.
Thus, the velocity and pressure fields $u$ and $p$ verify:
\begin{subequations}
\label{primitives}
\begin{empheq}[left=\empheqlbrace]{align}
\label{eq:stokes_1}
-\nu\Delta u+\nabla p &=0\quad\text{in }\mathcal F,\\
\label{eq:stokes_2}
\dv u&=0\quad\text{in }\mathcal F,\\
\label{eq:stokes_3}
u&=0\quad\text{on }\varGamma,\\
\label{eq:stokes_4}
u&=v\quad\text{on }\varGamma_0,
\end{empheq}
where $\nu>0$ stands for the kinematic viscosity of the fluid and $v$ is a prescribed velocity on $\varGamma_0$ satisfying the flux condition:
\begin{equation}
\label{f_cond}
\int_{\varGamma_0} v\cdot n\ds= 0.
\end{equation}
\end{subequations}
The Cauchy stress tensor is 
defined in $\mathcal F$ by:
\begin{equation}
\label{C_T_tensor}
T(u,p)=\nu\big[\nabla u+(\nabla u)^t\big]-p\,{\rm Id}.
\end{equation}
The classical reconstruction problem we focus on in this article consists in determining the shape and position of the obstacles $\mathcal O_j$ by observing the Cauchy force $T(u,p)n$ along  $\varGamma_0$. More precisely, introducing the Sobolev space:
$$\widetilde H^{1/2}(\varGamma_0;\mathbb R^2)=\Big\{v\in H^{1/2}(\varGamma_0;\mathbb R^2)\,:\, \int_{\varGamma_0} v\cdot n\ds=0\Big\},$$
and denoting by $H^{-1/2}(\varGamma_0;\mathbb R^2)$ its dual space (using $L^2(\varGamma_0;\mathbb R^2)$ as pivot space), we   define 
the operator:
$$\fct{\widetilde\varLambda_\varGamma:\widetilde H^{1/2}(\varGamma_0;\mathbb R^2)}{H^{-1/2}(\varGamma_0;\mathbb R^2)}{v}{T(u,p)n,}$$
where $(u,p)$ is the solution of System \eqref{primitives}. We can now state:
 \begin{problem}
 \label{pb1}
Reconstruct the multi-connected domain $\mathcal O$ from $\widetilde\varLambda_\varGamma$.
 \end{problem}
In \cite{Alvarez:2005aa} it is established that $\widetilde\varLambda_\varGamma$ uniquely determines the multi-connected domain 
$\mathcal O$.
Relevant references on this problem can be found  in 
\cite{Caubet:2016ab}. As far as reconstruction methods are concerned, most are based on the minimization of a cost functional using shape sensitivity analysis techniques.
However, as explained in \cite{Badra:2011aa}, the problem is severely ill-posed and generally these methods, although capable of roughly capturing the position and shape of the obstacles, give mixed results. To achieve a satisfactory reconstruction, the obstacles generally have to 
satisfy certain constraints: they must be close to the outer boundary (where the measurements are made), sometimes sufficiently small, sometimes their number must be known a priori.
\par
The new approach we propose in this paper includes the following steps: first, we will show that Problem~\ref{pb1} can be equivalently reformulated in terms of vorticity and stream function (the so-called ``non-primitive variables''), leading to an inverse problem for the biharmonic operator. 
This problem will then be transformed into a system of integral equations involving biharmonic single-layer potentials.  Following an idea 
initially introduced in \cite{Munnier:2017aa}, we will explain how the formulation can be modified so that only integrals on the obstacles boundaries appear and this will allow us to compute, for any harmonic functions $h_1,h_2$ in $\varOmega$, the quantities:
$$\int_{\mathcal O} h_1 h_2\, {\rm d}m.$$
By choosing harmonic monomials for $h_1$ and $h_2$, we will be able to deduce the  complex moments of the obstacles:
\begin{equation}
\label{gtyhui}
\int_{\mathcal O} \overline{z^m} z^n\, {\rm d}m(z)\forallt n,m\in\mathbb N,
\end{equation}
thus transforming the initial reconstruction problem into a classical shape-from-moments problem. At this point, two algorithms of reconstruction 
can be used. The first one, described in \cite{Gustafsson:2009aa} requires a finite section of 
all the complex moments. The boundaries of the obstacles are 
obtained as the zero level line of a function constructed from the orthogonal polynomials on $\mathcal O$.
The second requires only a finite section of the harmonic moments. The obstacles are first considered as Dirac masses whose 
locations and weights are obtained by solving a so-called Prony's system (a non-linear system associated to a Vandermonde matrix).  Then, the shapes 
of the obstacles are deduced by applying a ``partial-balayage'' operator (derived from the theory of ``balayage'' of measures). This method 
was successfully applied to solve inverse gravimetric problems in \cite{Gerber-Roth:2023aa} and will be seen to provide fairly accurate results
regardless of the size, number and location of the obstacles.
\par
It should be noted that one of the original features of the proposed algorithms is that they are not iterative, unlike most of those available to date.
They require a finite number of measurements and provide a direct approximation of the target domains.
\section{The problem in non-primitive variables}
\label{SEC:2}
For any vector $x=(x_1,x_2)$ in $\mathbb R^2$, $x^\perp$ is the vector rotated counter-clockwise, i.e. $x^\perp=(-x_2,x_1)$. The velocity field $u$ in System \eqref{primitives} satisfies $\dv u=0$ in $\mathcal F$ and 
$\int_{\mathscr C}u\cdot n\ds=0$ for any Jordan curve $\mathscr C$ included in $\mathcal F$. This ensures the existence  of 
a stream function $\psi$ such that $\nabla^\perp\psi=u$ in $\mathcal F$. From 
  \eqref{eq:stokes_1}, we deduce that:
\begin{equation}
\label{holo}
-\nu\nabla^\perp\omega+\nabla p = 0\quad\text{in }\mathcal F,
\end{equation}
where $\omega=\Delta\psi$ is the vorticity field. 
This implies (by taking the rotational of the identity above) that $\omega$ is harmonic in $\mathcal F$ and that:
$$
\int_{\varGamma_j}\partial_n\omega\ds=0\forallt j=1,\ldots,N.
$$
From the prescribed velocity field $v$ in \eqref{eq:stokes_4}, we define a function $f^d$ on $\varGamma_0$ such that $\partial_\tau f^d=v_\cdot n$, 
where $\tau$ is the unit tangent vector field on $\varGamma_0$ oriented such that $\tau^\perp=n$. The existence of $f^d$ is guaranteed by 
the flux condition \eqref{f_cond}. Let also introduce 
$f^n=-v\cdot\tau$ on $\varGamma_0$. The boundary value problem \eqref{primitives} can be restated in 
terms of non-primitive variables $(\psi,\omega)$ as follows:
\begin{subequations}
\label{sys_courant_1}
\begin{empheq}[left=\empheqlbrace]{align}
\label{sys_courant_1_eq1}
\Delta\omega &=0\quad\text{in }\mathcal F,\\
\Delta\psi &=\omega\quad\text{in }\mathcal F,\\
\label{sys_courant_1_eq3}
\big(\psi,\partial_n\psi\big) &=(c_j,0)\quad\text{on }\varGamma_j\text{ for }j=1,\ldots,N,\\
\label{sys_courant_1_eq4}
\big(\psi,\partial_n\psi\big) &=(f^d,f^n)\quad\text{on }\varGamma_0,
\end{empheq}
where the $c_j$ ($j=1,\ldots,N$) are real constants such that:
\begin{equation}
\label{sys_courant_1_eq6}
\int_{\varGamma_j}\partial_n\omega\ds=0\forallt j=1,\ldots,N.
\end{equation}
\end{subequations}
Extending the stream function $\psi$ by $c_j$ in $\mathcal O_j$, it can be assumed to be defined in the whole domain 
$\varOmega$. In a similar way, the vorticity 
field $\omega$ is assumed to be defined in $\varOmega$, extended by $0$ in the obstacles. 
In the sequel, we shall also require the functions $\xi_j$ ($j=0,1,\ldots,N$) solving:
\begin{subequations}
\label{defxi}
\begin{empheq}[left=\empheqlbrace]{align}
\Delta\omega_j&=0\quad\text{in }\mathcal F\\
\Delta\xi_j&=\omega_j\quad\text{in }\mathcal F\\
\big(\xi_j,\partial_n\xi_j\big) &=(1,0)\quad\text{on }\varGamma_j,\\
\big(\xi_j,\partial_n\xi_j\big) &=(0,0)\quad\text{on }\varGamma_k,\quad(k=0,1,\ldots,N,\,k\neq j).
\end{empheq}
\end{subequations}
The proof of the existence and uniqueness of the functions $\xi_j$ in $H^2(\mathcal F)$ is classical (see \cite[Proposition 1.3]{Girault:1986aa}). Once extended by suitable 
contants in the obstacles  they can be considered as functions of the space $H^2_0(\varOmega)$. For every function $u\in H^2(\varOmega)$, we can define the Dirichlet and Neumann traces on any curve $\varGamma_k$ ($k=0,\ldots,N$)  denoted respectively 
by $\gamma_{\varGamma_k}^d u$ 
and $\gamma_{\varGamma_k}^n u$ (the Neumann trace is defined taking into account the orientation of $n$). The total trace operator is next 
given by:
$$\fct{\gamma_{\varGamma_k}:H^2(\varOmega)}{H^{3/2}({\varGamma_k})\times H^{1/2}(\varGamma_k)}
{u}{\big(\gamma_{\varGamma_k}^d u,\gamma_{\varGamma_k}^n u\big).}$$
In the sequel we denote by $H(\varGamma_k)$ the space $H^{3/2}({\varGamma_k})\times H^{1/2}(\varGamma_k)$ and 
by $H'(\varGamma_k)=H^{-3/2}({\varGamma_k})\times H^{-1/2}(\varGamma_k)$ its dual space, using $L^2(\varGamma_k)\times L^2(\varGamma_k)$ as pivot space.
\begin{prop}
\label{PROP:1}
For any $f=(f^d,f^n)\in H(\varGamma_0)$, there exists a unique function $\psi$ in $H^2(\varOmega)$ constant in every $\mathcal O_j$ 
($j=1,\ldots,N$) and such that its restriction to $\mathcal F$ solves  
System \eqref{sys_courant_1}. It is obtained as the unique function achieving:
\begin{equation}
\label{eq:min}
\min\Big\{\|\Delta u\|_{L^2(\varOmega)}\,:\,\gamma_{\varGamma_0}u=f,\,\gamma_{\varGamma_j}u=(c_j,0),\, c_j\in\mathbb R,\,(j=1,\ldots,N),\,u\in H^2(\varOmega)\Big\}.
\end{equation}
\end{prop}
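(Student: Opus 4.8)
The plan is to read \eqref{eq:min} as the minimisation of the quadratic functional $J(u)=\|\Delta u\|_{L^2(\varOmega)}^2$ over the affine subspace
\[
\mathcal A=\big\{u\in H^2(\varOmega):\gamma_{\varGamma_0}u=f,\ \gamma_{\varGamma_j}^n u=0,\ \gamma_{\varGamma_j}^d u\ \text{constant},\ j=1,\dots,N\big\}
\]
of the Hilbert space $H^2(\varOmega)$. First I would check that $\mathcal A$ is nonempty, by surjectivity of the total trace operator on the $\mathcal C^{1,1}$ domain $\varOmega$ (prescribe $f$ on $\varGamma_0$ and $(0,0)$ on each $\varGamma_j$), and that its direction space
\[
V=\big\{v\in H^2(\varOmega):\gamma_{\varGamma_0}v=0,\ \gamma_{\varGamma_j}^n v=0,\ \gamma_{\varGamma_j}^d v\ \text{constant},\ j=1,\dots,N\big\}
\]
is closed, being the intersection of kernels of continuous trace maps with the closed condition that $\gamma_{\varGamma_j}^d v$ be constant. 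Existence and uniqueness of the minimiser then reduce, via the projection theorem for the bilinear form $a(u,v)=\int_\varOmega\Delta u\,\Delta v\dx$, to proving that $v\mapsto\|\Delta v\|_{L^2(\varOmega)}$ is a norm on $V$ equivalent to $\|\cdot\|_{H^2(\varOmega)}$.

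This coercivity is the heart of the argument and the step I expect to be the main obstacle. I would establish it from two ingredients. The first is that the kernel is trivial: if $v\in V$ and $\Delta v=0$, then $v$ is harmonic in $\varOmega$ with vanishing Cauchy data $(v,\partial_n v)=(0,0)$ on $\varGamma_0$, so Holmgren's unique continuation forces $v\equiv 0$. The second is the a priori elliptic estimate $\|v\|_{H^2(\varOmega)}^2\le C\big(\|\Delta v\|_{L^2(\varOmega)}^2+\|v\|_{L^2(\varOmega)}^2\big)$, valid on $\mathcal C^{1,1}$ domains. I would combine them by a standard compactness argument: a hypothetical sequence $v_n\in V$ with $\|v_n\|_{H^2(\varOmega)}=1$ and $\|\Delta v_n\|_{L^2(\varOmega)}\to 0$ would, after extracting a subsequence, converge weakly in $H^2$ and strongly in $L^2$ (Rellich) to some $v\in V$ with $\Delta v=0$, hence $v=0$; but the a priori estimate would then force $\|v_n\|_{H^2(\varOmega)}\to 0$, a contradiction. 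This yields $\|v\|_{H^2(\varOmega)}\le C\|\Delta v\|_{L^2(\varOmega)}$ on $V$, so $J$ is coercive and strictly convex on $\mathcal A$ and possesses a unique minimiser $\psi$.

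It then remains to identify $\psi$ with the solution of \eqref{sys_courant_1}. The minimiser satisfies the Euler--Lagrange identity $\int_\varOmega\Delta\psi\,\Delta v\dx=0$ for all $v\in V$, and, writing $\omega=\Delta\psi$, I would test it against three families of admissible variations. Functions $v\in H^2_0(\mathcal F)$ extended by zero give $\Delta\omega=0$ in $\mathcal F$, i.e. the vorticity is harmonic. Functions $v\in H^2_0(\mathcal O_j)$ extended by zero give $\Delta^2\psi=0$ in $\mathcal O_j$; together with the prescribed data $(\psi,\partial_n\psi)=(c_j,0)$ on $\varGamma_j$ and uniqueness for the biharmonic Dirichlet problem, this yields $\psi\equiv c_j$ in $\mathcal O_j$, the asserted constancy in the obstacles. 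Finally, integrating by parts in $\mathcal F$ against a general $v\in V$ whose Dirichlet trace on $\varGamma_j$ is the constant $d_j$ and whose Neumann trace vanishes, the interior term disappears (since $\omega$ is harmonic) and the boundary terms collapse to $-\sum_j d_j\int_{\varGamma_j}\partial_n\omega\ds=0$; the arbitrariness of the $d_j$ then produces the natural conditions $\int_{\varGamma_j}\partial_n\omega\ds=0$ of \eqref{sys_courant_1_eq6}. Together with the imposed traces on $\varGamma_0$ and $\varGamma_j$, this shows that $\psi|_{\mathcal F}$ solves \eqref{sys_courant_1}.

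Conversely, any $u\in H^2(\varOmega)$ that is constant in the obstacles and whose restriction to $\mathcal F$ solves \eqref{sys_courant_1} satisfies the same variational identity, as one sees by reversing the integrations by parts and using precisely the flux conditions \eqref{sys_courant_1_eq6} to annihilate the boundary terms; such a $u$ therefore belongs to $\mathcal A$ and is the minimiser, so by uniqueness of the latter the function $\psi$ is unique. The delicate point throughout remains the coercivity on $V$: the constraints carried by the inner curves $\varGamma_j$ obstruct a direct Poincaré-type bound, and it is only the pairing of the elliptic a priori estimate with unique continuation from $\varGamma_0$ that excludes a nontrivial direction in the kernel.
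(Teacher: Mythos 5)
Your proof is correct, but it takes a genuinely different route from the paper's. The paper's argument is a pure Hilbert-space projection: it equips $H^2(\varOmega)$ with the scalar product $(\theta_1,\theta_2)_{\varOmega}=(\Delta\theta_1,\Delta\theta_2)_{L^2(\varOmega)}+(\gamma_{\varGamma_0}\theta_1,\gamma_{\varGamma_0}\theta_2)_{H(\varGamma_0)}$, takes a lift $u_f$ of $f$ vanishing in $\mathcal O$, and identifies the minimizer of \eqref{eq:min} as the orthogonal projection of $u_f$ onto $B(\varOmega)=\big(H^2_0(\mathcal F)\oplus\langle \xi_1,\ldots,\xi_N\rangle\big)^\perp$, where the $\xi_j$ are the auxiliary functions \eqref{defxi}; the verification that this projection solves \eqref{sys_courant_1} is left as ``classical''. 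You instead run the direct method on the affine constraint set and then extract the PDE from the Euler--Lagrange identity, testing with variations in $H^2_0(\mathcal F)$, in $H^2_0(\mathcal O_j)$, and with variations whose Dirichlet traces on the $\varGamma_j$ are arbitrary constants. What your route buys is precisely what the paper leaves implicit: it explains why the minimizer is constant inside each $\mathcal O_j$ even though \eqref{eq:min} constrains only traces, and it exhibits the flux conditions \eqref{sys_courant_1_eq6} as natural boundary conditions generated by varying the constants $d_j$; your converse direction also shows cleanly that any solution of \eqref{sys_courant_1} extended by constants is the minimizer, which yields uniqueness. What the paper's route buys is brevity and reuse: the $\xi_j$ are needed later anyway, and existence and uniqueness collapse to the projection theorem once the new scalar product is known to be equivalent to the $H^2$ one (which rests on the same analytic ingredients as your coercivity).

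Two corrections of detail, neither fatal. First, the a priori estimate $\|v\|^2_{H^2(\varOmega)}\leqslant C\big(\|\Delta v\|^2_{L^2(\varOmega)}+\|v\|^2_{L^2(\varOmega)}\big)$ is false for general $v\in H^2(\varOmega)$ (harmonic functions already violate it); it requires a boundary condition. It is valid where you apply it only because every $v\in V$ satisfies $v=0$ on $\varGamma_0=\partial\varOmega$, i.e.\ $V\subset H^2(\varOmega)\cap H^1_0(\varOmega)$, where the $\mathcal C^{1,1}$ Dirichlet regularity theory applies; you should state this restriction explicitly. Second, your closing assessment---that the constraints on the inner curves obstruct a direct bound, so that unique continuation from the Cauchy data on $\varGamma_0$ is essential---is not accurate: since $\partial\varOmega=\varGamma_0$, the inclusion $V\subset H^2(\varOmega)\cap H^1_0(\varOmega)$ already gives the direct estimate $\|v\|_{H^2(\varOmega)}\leqslant C\|\Delta v\|_{L^2(\varOmega)}$ (elliptic regularity plus uniqueness for the Dirichlet problem), so both the compactness step and Holmgren's theorem can be dispensed with; the curves $\varGamma_j$ play no role in the coercivity.
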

\begin{proof}
Let $u_f$ be a function in $H^2(\varOmega)$ such that $\gamma_{\varGamma_0}u_f=f$ and $u_f=0$ in $\mathcal O$. 
The space $H^2(\varOmega)$ is provided with the scalar product:
$$\big(\theta_1,\theta_2)_{\varOmega}=\big(\Delta \theta_1,\Delta \theta_2)_{L^2(\varOmega)}+
\big(\gamma_{\varGamma_0}\theta_1,\gamma_{\varGamma_0}\theta_2\big)_{H(\varGamma_0)},\qquad\theta_1,\theta_2\in H^2_0(\varOmega),$$
where $(\cdot,\cdot)_{H(\varGamma_0)}$ stands for the scalar product in $H(\varGamma_0)$.
Identifying $H^2_0(\mathcal F)$ with the subspace of $H^2_0(\varOmega)$ consisting in the functions that vanishe in $\mathcal O$, we define 
the subspace of $H^2(\varOmega)$:
$$B(\varOmega)=\big(H^2_0(\mathcal F)\oplus\langle \xi_1,\ldots,\xi_N\rangle\big)^\perp.$$
It is classical to verify that the function achieving \eqref{eq:min} is the orthogonal projection of $u_f$ on $B(\varOmega)$ in $H^2(\varOmega)$ 
and that it is unique. 
\end{proof}
%
From identities \eqref{holo} and \eqref{sys_courant_1_eq1}, we deduce that $\omega$ and $p$ are conjugate harmonic functions, or equivalently that the complex function 
$\nu\omega+ip$ is holomorphic in $\mathcal F$. It means that (up to a constant) $p$ can be deduced from $\omega$ (see \cite[\S 7.1]{Langlois:2014aa}).
We denote by $D^2\psi$ the Hessian matrix of $\psi$ and by $(\perp)$ the rotation matrix of angle $\pi/2$ counter-clockwise. 
The Cauchy stress tensor \eqref{C_T_tensor} reads:
$$T(\psi,\omega)=\nu\big[(\perp)D^2\psi-D^2\psi(\perp)\big]-p\,{\rm Id}.$$
Introducing $s$, the arc length parameterization on $\varGamma_0$, we recall the relations:
$$\partial_s n(s)=-\kappa(s)\tau(s)\qquad\text{and}\qquad\partial_s\tau(s) = \kappa(s) n(s),$$
where the function $\kappa$ is the curvature of $\varGamma_0$. Forming the scalar product of $T(\psi,\omega)n$ with $n$ and $\tau$ on 
$\varGamma_0$ we obtain:
$$
T(\psi,\omega)n\cdot n=2\nu D^2\psi n\cdot \tau-p\qquad\text{and}\qquad
T(\psi,\omega)n\cdot \tau=\nu\big[-D^2\psi\tau\cdot\tau+D^2\psi n\cdot n\big].$$
Using the identities $\partial_s(\nabla\psi\cdot n)=D^2\psi \tau\cdot n -\kappa\partial_\tau \psi$, 
$\omega = D^2\psi\tau\cdot\tau+D^2\psi n\cdot n$ and
$\partial_s(\nabla\psi\cdot\tau)=D^2\psi\tau\cdot\tau+\kappa\partial_n\psi$
we end up with:
$$T(\psi,\omega)n\cdot n=2\nu\, \kappa\, \partial_\tau \psi+2\,\nu\,\partial_s(\partial_n\psi)-p
\qquad\text{and}\qquad T(\psi,\omega)n\cdot \tau=-2\nu\,\kappa\, \partial_n\psi+2\nu\,\partial_s(\partial_\tau\psi)-\nu\,\omega.$$
Because of \eqref{holo}, we have on $\varGamma_0$, $\partial_sp=-\nu\partial_n\omega$, and hence $p$ is an antiderivative of 
$-\nu\partial_n\omega$ 
on $\varGamma_0$ so we denote $p=-\nu\int \partial_n\omega$. Summarizing, taking into account \eqref{sys_courant_1_eq4}, we 
obtain eventually:
$$
T(\psi,\omega)n\cdot n=\nu\Big[2  \kappa \partial_s f^d+2 \partial_s f^n+ \int \partial_n\omega\Big]\qquad\text{and}\qquad
T(\psi,\omega)n\cdot \tau=\nu\Big[2 \partial_{ss}^2 f^d-2 \kappa f^n-\omega\Big].
$$
So, for prescribed $f^d$ and $f^n$, it is equivalent to measure the Cauchy force exerted by the fluid on $\varGamma_0$ and 
to measure the pair $(\omega,\partial_n\omega)$ on this boundary. The function $\omega$ in System \eqref{sys_courant_1} is
harmonic and square-integrable on $\varOmega$, so it admits a Dirichlet and a Neumann trace on $\varGamma_0$ 
and we can define:
\begin{equation}
\label{fct_Lambda}
\fct{\varLambda_\varGamma:H(\varGamma_0)}{H'(\varGamma_0)}
{(f^d,f^n)}{(\partial_n\omega|_{\varGamma_0},-\omega|_{\varGamma_0}).}
\end{equation}
From now on, we shall focus on the following problem:
\begin{problem}
\label{pb2}
Reconstruct the multi-connected domain $\mathcal O$ from $\varLambda_\varGamma$.
\end{problem}
 \section{Identifiability for the biharmonic inverse problem}
Identifiability results for Problem~\ref{pb1} are given in \cite[Theorem 3.1]{Alvarez:2005aa} and in \cite[Theorem 2.1]{Alves:2007aa} and 
 can probably be adapted to Problem~\ref{pb2}. We prefer to provide an original proof directly for the biharmonic inverse problem.
\begin{theorem}
\label{theo:ident}
Let $\mathcal O$ and $\mathcal O'$ be two sets of obstacles as described in Section~\ref{SEC:1}. 
Assume that there exists $f=(f^d,f^n)\in H(\varGamma_0)$ and an open set $\mathscr U$ 
in $\mathbb R^2$ such that $\mathscr U\cap \varGamma_0\neq \varnothing$, $ {\mathscr U}\cap  \big({\mathcal O\cup\mathcal O'}\big)=
\varnothing$, $f^d$ non-constant in $\mathscr U\cap \varGamma_0$ and:
$$ \langle\varLambda_\varGamma f,\gamma_{\varGamma_0}\theta\rangle_{\varGamma_0}=\langle\varLambda_{\varGamma'} f,\gamma_{\varGamma_0}\theta\rangle_{\varGamma_0}\forallt \theta\in\mathscr D(\mathscr U).$$
Then 
$\mathcal O=\mathcal O'$. 
\end{theorem}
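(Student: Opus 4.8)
The plan is to rewrite the hypothesis as an equality of Cauchy data for the vorticities on the measurement arc, run unique continuation twice (first for $\omega$, then for $\psi$), and finally reach a contradiction through a clamped-plate energy identity in which the flux condition \eqref{sys_courant_1_eq6} makes the boundary term disappear.

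First I would unfold the duality pairing. Writing $\varLambda_\varGamma f=(\partial_n\omega|_{\varGamma_0},-\omega|_{\varGamma_0})$ and $\gamma_{\varGamma_0}\theta=(\theta|_{\varGamma_0},\partial_n\theta|_{\varGamma_0})$, the tested identity reads $\int_{\varGamma_0}(\partial_n\omega)\,\theta-\omega\,\partial_n\theta\ds=\int_{\varGamma_0}(\partial_n\omega')\,\theta-\omega'\,\partial_n\theta\ds$ for every $\theta\in\mathscr D(\mathscr U)$. Since the Dirichlet and Neumann traces of such $\theta$ can be prescribed independently and are supported in $\varGamma_0\cap\mathscr U$, this forces $\omega=\omega'$ and $\partial_n\omega=\partial_n\omega'$ on the open arc $\varGamma_0\cap\mathscr U$. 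Let $\mathcal G$ be the connected component of $\mathcal F\cap\mathcal F'=\varOmega\setminus(\overline{\mathcal O}\cup\overline{\mathcal O'})$ whose boundary contains $\varGamma_0$; it is well defined because $\varGamma_0$ is connected and the obstacles are compactly contained in $\varOmega$, and the hypothesis $\mathscr U\cap(\mathcal O\cup\mathcal O')=\varnothing$ ensures $\varGamma_0\cap\mathscr U\subset\partial\mathcal G$. As $\omega$ and $\omega'$ are harmonic in $\mathcal G$, their difference has vanishing Cauchy data on the $\mathcal C^{1,1}$ arc $\varGamma_0\cap\mathscr U$, hence extends harmonically by zero across it and vanishes by unique continuation: $\omega=\omega'$ in $\mathcal G$.

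Next I would upgrade this to the stream functions. On $\mathcal G$ one has $\Delta(\psi-\psi')=\omega-\omega'=0$, so $\psi-\psi'$ is harmonic; moreover $\gamma_{\varGamma_0}\psi=\gamma_{\varGamma_0}\psi'=f$, so $\psi-\psi'$ has vanishing Dirichlet and Neumann traces on $\varGamma_0\cap\mathscr U$, and the same unique-continuation argument gives $\psi=\psi'$ in $\mathcal G$. Arguing now by contradiction, suppose $\mathcal O\neq\mathcal O'$. After possibly exchanging the two configurations, I would select a connected component $\mathcal D$ of the symmetric difference with $\mathcal D\subset\mathcal F\cap\mathcal O'$ adjacent to $\mathcal G$. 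Then $\psi$ is biharmonic in $\mathcal D$ (because $\mathcal D\subset\mathcal F$), and on each connected component $C_i$ of $\partial\mathcal D\subset\varGamma\cup\varGamma'$ it inherits the clamped conditions $\partial_n\psi=0$ and $\psi=a_i$ (constant, the matching of the various obstacle constants along $C_i$ following from the continuity of $\psi$): on the $\varGamma$-arcs directly from the $\mathcal O$-boundary condition, and on the $\varGamma'$-arcs from $\psi=\psi'$ on the neighbouring $\mathcal G$-side combined with the $\mathcal O'$-boundary condition. Two integrations by parts then yield
\[
\int_{\mathcal D}(\Delta\psi)^2\dx=-\sum_i a_i\int_{C_i}\partial_n\omega\ds,
\]
and each integral $\int_{C_i}\partial_n\omega\ds$ vanishes — by the flux condition \eqref{sys_courant_1_eq6} on the $\varGamma$-components, and by harmonicity of $\omega$ in the fluid region enclosed by $C_i$ on the $\varGamma'$-components. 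Hence $\omega=\Delta\psi\equiv0$ in $\mathcal D$, so $\omega\equiv0$ in the connected set $\mathcal F$ by unique continuation, and $\psi$ is harmonic there. But then $\psi-a_i$ is harmonic in $\mathcal F$ with vanishing Cauchy data on a $\varGamma'$-arc of $\partial\mathcal D$ lying in the interior of $\mathcal F$, forcing $\psi\equiv a_i$ in $\mathcal F$; restricting to $\varGamma_0$ makes $f^d=\psi|_{\varGamma_0}$ constant, contradicting the hypothesis that $f^d$ is non-constant on $\mathscr U\cap\varGamma_0$. Therefore $\mathcal O=\mathcal O'$.

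The two unique-continuation steps are routine. The delicate part — where I expect the real work to lie — is the geometric bookkeeping of the last step: choosing a component $\mathcal D$ of the symmetric difference all of whose boundary carries transferable clamped data (equivalently, whose outer interface is adjacent to $\mathcal G$), and checking that every boundary integral $\int_{C_i}\partial_n\omega\ds$ is annihilated, either by the flux relation \eqref{sys_courant_1_eq6} or by interior harmonicity of $\omega$. This is precisely where the fourth-order nature of the problem makes itself felt: the two clamped conditions $\psi=\text{const}$, $\partial_n\psi=0$ are not full Cauchy data for $\Delta^2$, so a direct local unique continuation across an obstacle arc is unavailable, and one is driven to the global energy identity whose boundary term survives only because of the flux structure built into the model.
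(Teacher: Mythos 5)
Your opening steps are sound: recovering the equality of the Cauchy data of $\omega$ and $\omega'$ on $\varGamma_0\cap\mathscr U$ from the tested identity, and then running unique continuation twice to get $\omega=\omega'$ and $\psi=\psi'$ in the component $\mathcal G$ of $\varOmega\setminus\big(\overline{\mathcal O}\cup\overline{\mathcal O'}\big)$ touching $\varGamma_0$, is a legitimate variant of the paper's first step (the paper works instead with $\varphi=\psi-\psi'$ extended by zero outside $\varOmega$ and continues it as a biharmonic function). Where you genuinely diverge is afterwards: the paper never decomposes the symmetric difference. It shows that $\psi'$ takes constant Dirichlet data (and zero Neumann data) on each curve $\varGamma_j$, so that $\psi'$ is an admissible competitor in the variational problem of Proposition~\ref{PROP:1} characterizing $\psi$, and symmetrically; equality of the two Dirichlet energies and uniqueness of the minimizer then give $\psi=\psi'$ in all of $\varOmega$, after which the non-constancy of $f^d$ yields the conclusion in two lines. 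Your clamped-plate energy identity on a component $\mathcal D$ of the symmetric difference replaces this global variational step by a local one.

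This is precisely where your proof has a genuine gap, and it is not mere bookkeeping: the component $\mathcal D$ you need --- one all of whose $\varGamma'$-arcs are adjacent to $\mathcal G$, so that the clamped data of $\psi'$ can be transferred to $\psi$ --- may fail to exist. Since your unique continuation gives $\psi=\psi'$ only in $\mathcal G$, you know nothing on interface arcs that face other components of $\varOmega\setminus\big(\overline{\mathcal O}\cup\overline{\mathcal O'}\big)$. Take $\mathcal O'$ a single horseshoe-shaped obstacle and $\mathcal O$ a disk sealing its mouth (overlapping both tips), so that $\overline{\mathcal O\cup\mathcal O'}$ encloses a pocket $P$, a component of the common fluid region distinct from $\mathcal G$. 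Then the (unique) component of $\mathcal O'\setminus\overline{\mathcal O}$ has $\varGamma'$-arcs facing $P$, and after swapping roles the component of $\mathcal O\setminus\overline{\mathcal O'}$ has $\varGamma$-arcs facing $P$; in either case the energy identity cannot be closed, so the contradiction argument cannot even start. Completing your route would require an additional mechanism to propagate $\psi=\psi'$ into pockets --- which is essentially what the paper's variational step supplies globally (though, in fairness, the paper's own claim that $\varphi$ vanishes in the whole set $\mathbb R^2\setminus\overline{\mathcal O\cup\mathcal O'}$ quietly glosses over the same connectedness issue). A secondary, fixable error: your justification that $\int_{C_i}\partial_n\omega\ds=0$ on $\varGamma'$-components is wrong as stated, since the region enclosed by such a curve generally contains obstacles of $\mathcal O$, where $\omega$ is not harmonic; moreover components of $\partial\mathcal D$ are generically mixed, made of arcs of both $\varGamma$ and $\varGamma'$, a case your dichotomy does not cover. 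The correct argument is that the flux conditions \eqref{sys_courant_1_eq6} together with harmonicity make every period of $\nabla\omega$ vanish (equivalently, the pressure is single-valued), so the flux of $\nabla\omega$ through \emph{any} closed curve in $\overline{\mathcal F}$ is zero.
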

In this statement, the bracket $\langle\cdot,\cdot\rangle_{\varGamma_0}$ stands for the duality pairing in 
$H'(\varGamma_0)\times H(\varGamma_0)$ that extends the scalar product of $L^2(\varGamma_0)\times L^2(\varGamma_0)$.
\begin{proof}Denote by $\psi$ and $\psi'$ the solutions to System \eqref{sys_courant_1} corresponding respectively to the
obstacles $\mathcal O$ and 
$\mathcal O'$ and  the same right hand side $f=(f^d,f^n)$ in identity \eqref{sys_courant_1_eq4}. As already explained earlier, these functions extended by suitable constants in the obstacles are in $H^2(\varOmega)$.
The function $\varphi=\psi-\psi'$ is furthermore extended by zero outside $\varOmega$ and 
since $\gamma_{\varGamma_0}\varphi=0$, this function is in $H^2(\mathbb R^2)$ and is biharmonic in $\mathbb R^2\setminus\big(\varGamma_0\cup \varGamma\cup\varGamma'\big)$. For every $\theta\in\mathscr D(\mathscr U)$, an integration by parts 
yields:
$$\big(\Delta\varphi,\Delta\theta\big)_{L^2(\mathbb R^2)}=\langle\varLambda_{\varGamma} f-\varLambda_{\varGamma'} f,\gamma_{\varGamma_0}\theta\rangle_{\varGamma_0}=0,$$
which means that $\Delta\varphi$ is harmonic in $\mathscr U$. But since $\varphi=0$ in the open set 
$\mathscr U\setminus\overline{\varOmega}$, the unique continuation principle entails 
 that the function $\varphi$  vanishes in the whole set $\mathbb R^2\setminus\big(\overline{\mathcal O\cup\mathcal O'}\big)$. 
\par
For every  $j=1,\ldots,N$, we can decompose the boundary $\varGamma_j$ into two parts:
$$\varGamma_j=\big(\varGamma_j\cap\overline{\mathcal O'}\big)\cup\big(\varGamma_j\cap(\mathbb R^2\setminus\overline{\mathcal O'})\big).$$
On $\varGamma_j\cap\overline{\mathcal O'}$, $\psi'$ is piecewise constant by definition and this is also true on $\varGamma_j\cap(\mathbb R^2\setminus\overline{\mathcal O'})$ because $\psi'=\psi$ on this set. Since $\psi'$ is continuous, $\psi'$ is constant on every 
curve $\varGamma_j$. 
According the Proposition~\ref{PROP:1}, this implies that $\|\Delta\psi'\|_{L^2(\varOmega)}\leqslant \|\Delta\psi\|_{L^2(\varOmega)}$. The role 
played by $\psi$ and $\psi'$ being symmetric, it turns out that $\|\Delta\psi'\|_{L^2(\varOmega)}= \|\Delta\psi\|_{L^2(\varOmega)}$ and 
then $\psi=\psi'$ by uniqueness of the minimum in Proposition~\ref{PROP:1}. If there were an index $j\in\{1,\ldots,N\}$ such that the open set $\mathcal O'_j\setminus\overline{\mathcal O}$ were non-empty, the function $\psi$ would be constant on this set (because $\psi'$ is) and therefore constant on the whole set $\varOmega\setminus\overline{\mathcal O}$. This would imply that $f^d$ is constant, which is excluded by 
hypothesis. We deduce that 
$\mathcal O\subset \mathcal O'$ and, because $\mathcal O$ and $\mathcal O'$ play symmetric roles, that $\mathcal O=\mathcal O'$.
\end{proof}
 \section{Biharmonic single-layer potential}
 This section is devoted to establishing (and recalling) some results on the biharmonic single-layer potential.  So we leave aside for a moment the inverse problem 
 we are dealing with and we consider a general framework in which $\varGamma$ represents a finite, disjoint union of $\mathcal C^{1,1}$ 
 Jordan curves. Sticking to our convention, the unit normal vector $n$ on $\varGamma$ is directed towards the interior of the domain 
 enclosed by the curves. If $u$ is a function defined on both sides of $\varGamma$ and admitting one-sided Dirichlet and Neumann 
 traces on $\varGamma$, we can define the jump of these traces across  the curve:
 $$\big[\partial_n u\big]_\varGamma=\gamma_\varGamma^n u^+-\gamma_\varGamma^n u^-\qquad\text{and}\qquad
  \big[ u\big]_\varGamma=\gamma_\varGamma^d u^+-\gamma_\varGamma^d u^-.$$
 These quantities are defined piecewise on each connected component of $\varGamma$ and each component is by definition a Jordan curve. 
The notation $u^-$ represents the restriction of $u$ to the domain  bounded by the curve, while $u^+$ designates the part of the function 
$u$   outside the curve. 
 \par
 The fundamental solution of the Bilaplacian is defined by:
\begin{equation}
\label{eq:def_G}
G(x)=\frac{1}{8\pi}\Big[|x|^2\ln\frac{|x|}{\kappa_0}+\kappa_1\Big]\forallt x\in\mathbb R^2,
\end{equation}
where $\kappa_0$ and $\kappa_1$ are real constants that will be fixed later on. Using the usual abuse of notation to identify $G(x-y)$ with 
a two-variables function $G(x,y)$, the biharmonic single-layer potential  is defined for every $q=(q_n,q_d)\in H'(\varGamma)$  by:
\begin{equation}
\label{eq:def_Sq}
\mathscr S_\varGamma q(x)=\int_\varGamma G (x,y)q_n(y)+\partial_{n(y)}G(x,y)q_d(y)\,{\rm d}s(y)\forallt x\in\mathbb R^2.
\end{equation}
The operator $\mathscr S_\varGamma:H'(\varGamma)\longrightarrow H^2_{\ell oc}(\mathbb R^2)$ is bounded
so the same conclusion applies to the operator:
\begin{equation}
\label{def_Vgamma}
\fct{S_\varGamma:H'(\varGamma)}{H(\varGamma)}
{q}{\gamma_\varGamma\circ\mathscr S_\varGamma q.}
\end{equation}
The results presented without proof in this section are borrowed from \cite{Munnier:2023aa}:
\begin{theorem}
\label{theo:def_pos}
Let $R>0$ be the radius of a circle $\mathcal C_R$ that enclosed $\varGamma$. 
If we choose $\kappa_0>eR$ and $\kappa_1>R^2$ in the definition \eqref{eq:def_G}, then the
operator $S_\varGamma$ is strongly elliptic on $H'(\varGamma)$ and therefore invertible.
\end{theorem}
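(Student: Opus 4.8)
The plan is to establish strong ellipticity through a coercivity (G\aa rding) inequality for the bilinear form
\[
a(q,q') := \langle S_\varGamma q, q'\rangle_\varGamma, \qquad q,q'\in H'(\varGamma),
\]
and then to upgrade it to invertibility by combining the Fredholm alternative with an injectivity argument. The starting point is that $u:=\mathscr S_\varGamma q$ is biharmonic in $\mathbb R^2\setminus\varGamma$, and that the density $q=(q_n,q_d)$ is recovered from the jumps $[u]_\varGamma$ and $[\partial_n u]_\varGamma$ (and those of the higher-order traces $\Delta u$, $\partial_n\Delta u$) through the standard single-layer jump relations. First I would write the biharmonic Green formula on each side of $\varGamma$, so that the interface contributions reassemble exactly into $a(q,q)$ while the volume contributions produce the biharmonic energy $\int|\Delta u|^2$. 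Since $\langle\cdot,\cdot\rangle_\varGamma$ extends the $L^2\times L^2$ pairing, this is the identity that converts the abstract boundary operator into a coercive energy.

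The genuine difficulty, and the reason the constants $\kappa_0,\kappa_1$ enter, is the growth of the fundamental solution. Because $G(x)\sim\tfrac{1}{8\pi}|x|^2\ln|x|$, the potential $u$ grows like $|x|^2\ln|x|$ at infinity while $\Delta u$ decays only like $\ln|x|$, so $\int_{\mathbb R^2}|\Delta u|^2$ diverges as soon as the total mass $\int_\varGamma q_n\,\ds$ is nonzero, and $a(q,q)$ cannot simply equal the full-space energy. I would therefore localise: apply Green's formula on the open disk $D_R$ bounded by $\mathcal C_R$, split it along $\varGamma$ into the enclosed region and the annular fluid part, and obtain
\[
a(q,q)=\int_{D_R}|\Delta u|^2\,\dx - b_R(q),
\]
where $b_R(q)$ gathers the boundary contributions on $\mathcal C_R$ (pairings of $u,\partial_n u,\Delta u,\partial_n\Delta u$). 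Expanding $G(x-y)$ for $|x|=R$ shows that, modulo terms controlled by the interior energy, $b_R(q)$ reduces to a finite-dimensional quadratic form in the low-order moments $\int_\varGamma q_n\,\ds$, $\int_\varGamma y\,q_n\,\ds$ and $\int_\varGamma q_d\,\ds$.

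The crux is then a positivity statement for this finite-dimensional form: by completing the square in the explicit expansion, I would show that the choices $\kappa_0>eR$ and $\kappa_1>R^2$ make the contribution of the constant and quadratic-polynomial modes strictly positive. This is the exact two-dimensional analogue of the logarithmic-capacity condition for the Laplace single layer, where replacing $\ln|x-y|$ by $\ln(|x-y|/d)$ with $d$ larger than the diameter restores positive definiteness; here the factor $e$ and the threshold $R^2$ account for the interplay between the $|x|^2\ln|x|$, $|x|^2$ and constant parts of $G$. Combining this positivity with the $L^2$-coercivity of $\Delta u$ on $D_R$ and the continuity of the trace maps yields
\[
a(q,q)\geqslant \alpha\,\|q\|_{H'(\varGamma)}^2 - \langle Kq,q\rangle_\varGamma,
\]
with $\alpha>0$ and $K$ compact (lower order), i.e. strong ellipticity. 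I expect this positivity computation to be the main obstacle, since it is precisely where the thresholds on $\kappa_0,\kappa_1$ are forced and where an error in the far-field bookkeeping would invalidate the sign.

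Finally, invertibility follows from Fredholm theory once injectivity is checked. If $S_\varGamma q=0$ then $u=\mathscr S_\varGamma q$ has vanishing Dirichlet and Neumann traces on $\varGamma$; being biharmonic in the enclosed domain with zero Cauchy data it vanishes there, and the same holds in the exterior, the admissible growth $O(|x|^2\ln|x|)$ left by $G$ being compatible with uniqueness thanks to the normalisation fixed by $\kappa_0,\kappa_1$. Hence $u\equiv 0$ in $\mathbb R^2\setminus\varGamma$, and the jump relations force $q=0$. The G\aa rding inequality makes $S_\varGamma$ Fredholm of index zero, so injectivity upgrades to bijectivity and the theorem follows.
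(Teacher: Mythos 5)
First, a point of reference: the paper itself contains no proof of this theorem --- it is explicitly stated as borrowed from the cited preprint on invertibility criteria for the biharmonic single-layer potential, so your attempt can only be judged on its own merits. Its skeleton is sensible: an energy identity for $\mathscr S_\varGamma q$ valid when enough moments of $q$ vanish (this is exactly the content of the paper's lemma on $\mathscr A^\perp$, the traces of affine functions), together with a separate finite-dimensional analysis in which $\kappa_0$ and $\kappa_1$ must intervene. But the proposal is incomplete precisely at its crux. The assertion that the boundary term $b_R(q)$ on $\mathcal C_R$ ``reduces to a finite-dimensional quadratic form in the low-order moments, modulo terms controlled by the interior energy'' is exactly what has to be proved, and it is delicate: $b_R(q)$ involves the traces of $u$, $\partial_n u$, $\Delta u$ and $\partial_n \Delta u$ on $\mathcal C_R$, hence every Fourier mode of the density, not only the low moments; the infinite-dimensional remainder must be absorbed into $\int_{D_R}|\Delta u|^2\dx$ with a constant strictly smaller than one, and the cross terms between growing and decaying multipoles carry no a priori sign. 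You yourself flag this computation as ``the main obstacle''; deferring it means that the G\aa rding inequality --- and with it the whole theorem, since this is the only place where $\kappa_0>eR$ and $\kappa_1>R^2$ are used --- is not established.

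Second, the injectivity step is incorrect as stated. Zero Cauchy data $(u,\partial_n u)=(0,0)$ on $\varGamma$ does force $u\equiv 0$ in the bounded components (uniqueness for the biharmonic Dirichlet problem), but not in the exterior under the growth you allow: the radial function $u(x)=|x|^2-R^2-2R^2\ln\big(|x|/R\big)$ is biharmonic in $|x|>R$, has vanishing Cauchy data on $\mathcal C_R$, and grows like $|x|^2$. So the claim that the admissible growth $O(|x|^2\ln|x|)$ is ``compatible with uniqueness thanks to the normalisation'' is false, or at best circular: uniqueness fails in that growth class, and injectivity of $S_\varGamma$ is not a soft fact --- it genuinely depends on $\kappa_0,\kappa_1$ (for unfavourable normalisations the operator has a nontrivial kernel, which is the very reason the cited reference is devoted to invertibility \emph{criteria}; compare the logarithmic-capacity-one degeneracy of the Laplace single layer). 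A correct argument must exploit the specific far-field structure of $\mathscr S_\varGamma q$, whose growing part is a combination of finitely many moments of $q$, together with the hypotheses on $\kappa_0,\kappa_1$ --- or else derive injectivity from the positive definiteness you are trying to prove in the first step, which again places all the weight on the missing computation.
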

The so-called jump relations allow recovering $q=(q_n,q_d)$ from the function $\mathscr S_\varGamma q$:
\begin{subequations}
\label{eq:jump}
\begin{align}
q_n&=-\big[\partial_n\Delta \mathscr S_\varGamma q\big]_\varGamma=
-\big(\partial_n \Delta \mathscr S^+_\varGamma q-\partial_n \Delta \mathscr S^-_\varGamma q\big)\\
q_d&=\big[\Delta \mathscr S_\varGamma q\big]_\varGamma=\Delta \mathscr S^+_\varGamma q-\Delta \mathscr S^-_\varGamma q.
\end{align}
\end{subequations}
Under the hypotheses of Theorem~\ref{theo:def_pos}, the spaces $H'(\varGamma)$ and $H(\varGamma)$ can be provided 
with the scalar products:
\begin{alignat*}{3}
\big(q,q')_{H'(\varGamma)}&=\big\langle q,S_\varGamma q'\big\rangle_\varGamma,&&\forallt q,q'\in H'(\varGamma),\\
\big(p,p')_{H(\varGamma)}&=\big\langle S_\varGamma^{-1}p,p'\big\rangle_\varGamma,&&\forallt p,p'\in H(\varGamma),
\end{alignat*}
where $\langle\cdot,\cdot\rangle_\varGamma$ stands for the duality pairing on $H'(\varGamma)\times H(\varGamma)$ that extends the scalar product 
on $L^2(\varGamma)\times L^2(\varGamma)$.
All the inclusions below are continuous and dense:
$$H(\varGamma)\subset L^2(\varGamma)\times L^2(\varGamma)\subset H'(\varGamma),$$
and $S_\varGamma$ is an isometric operator from $H'(\varGamma)$ onto $H(\varGamma)$. In the following, for every $p\in H(\varGamma)$, 
we denote $\widehat p=S_\varGamma^{-1}p$  so that $\gamma_\varGamma\circ \mathscr S_\varGamma\widehat p=p$ and 
$\|p\|_{H(\varGamma)}=\|\widehat p\|_{H'(\varGamma)}$.
\begin{lemma}
Let $\mathscr A$ be the three-dimensional subspace of $H(\varGamma)$ spanned by the total traces of the affine functions in $\mathbb R^2$.
Then, for every $p, p'\in \mathscr A^\perp$, $\mathscr S_\varGamma \widehat p$ and $\mathscr S_\varGamma \widehat p'$ are 
in $L^2(\mathbb R^2)$ and:
\begin{equation}
\label{eq:zero_fct}
\big(p,p')_{H(\varGamma)}=\int_{\mathbb R^2}\Delta \big(\mathscr S_\varGamma \widehat p\big)\,\Delta  \big(\mathscr S_\varGamma \widehat p'\big){\rm d}m.
\end{equation}
\end{lemma}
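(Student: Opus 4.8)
The plan is to proceed in three stages: translate membership in $\mathscr A^\perp$ into moment conditions on the density $\widehat p$, use these conditions to control the behaviour of $\mathscr S_\varGamma\widehat p$ at infinity, and finally obtain the identity \eqref{eq:zero_fct} by a Green-type integration by parts combined with the jump relations \eqref{eq:jump}.

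First I would unfold the orthogonality. Writing $\widehat p=(q_n,q_d)$ and using $(p,\gamma_\varGamma\ell)_{H(\varGamma)}=\langle S_\varGamma^{-1}p,\gamma_\varGamma\ell\rangle_\varGamma=\langle\widehat p,\gamma_\varGamma\ell\rangle_\varGamma$ for every affine $\ell$, the condition $p\in\mathscr A^\perp$ reads $\int_\varGamma q_n\,\ell\ds+\int_\varGamma q_d\,\partial_n\ell\ds=0$. Testing successively against $\ell=1$ and $\ell=x_1,x_2$ yields the three scalar relations
\[
\int_\varGamma q_n\ds=0,\qquad \int_\varGamma\big(y\,q_n+n\,q_d\big)\ds=0 .
\]
These are exactly the vanishing of the zeroth and first moments of the distribution $T=q_n\delta_\varGamma-\partial_n(q_d\delta_\varGamma)$, for which $\mathscr S_\varGamma\widehat p=G*T$.

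Second, I would insert the Taylor (multipole) expansion of $G(x-y)$ in the bounded variable $y$. Since $G(x)\sim|x|^2\ln|x|$, $\nabla G(x)\sim|x|\ln|x|$ and $D^2G(x)\sim\ln|x|$, the two leading blocks of the expansion of $\mathscr S_\varGamma\widehat p$ are governed precisely by the monopole $\int q_n\ds$ and the dipole $\int(y\,q_n+n\,q_d)\ds$ isolated above, so the moment relations annihilate the $|x|^2\ln|x|$ and $|x|\ln|x|$ growth. Applying $\Delta$ and using $\Delta G(x)=\tfrac1{2\pi}\ln\tfrac{|x|}{\kappa_0}+\tfrac1{2\pi}$ turns $\mathscr S_\varGamma\widehat p$ into a harmonic single/double layer whose far field is controlled by the very same two moments; this gives $\Delta\mathscr S_\varGamma\widehat p=O(|x|^{-2})$, hence $\Delta\mathscr S_\varGamma\widehat p\in L^2(\mathbb R^2)$, and differentiating the expansion once more yields $\nabla\Delta\mathscr S_\varGamma\widehat p=O(|x|^{-3})$. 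For the integral identity this decay is all that is needed; establishing the sharper statement that $\mathscr S_\varGamma\widehat p$ itself lies in $L^2(\mathbb R^2)$ — i.e. tracking the lower-order coefficients that survive the affine moment conditions — is the delicate part, and I expect this far-field bookkeeping to be the main obstacle.

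Third, with the decay in hand I would apply the second Green identity for the bilaplacian on a large ball $B_R$ cut along $\varGamma$. On each connected component of $\mathbb R^2\setminus\varGamma$ the functions $w=\mathscr S_\varGamma\widehat p$ and $w'=\mathscr S_\varGamma\widehat{p'}$ are biharmonic, so the volume term $\int w\,\Delta^2w'$ drops out and only boundary contributions remain. The integral over $\partial B_R$ vanishes as $R\to\infty$ by the estimates $\Delta w'=O(R^{-2})$ and $\partial_n\Delta w'=O(R^{-3})$, and the two one-sided integrals on $\varGamma$ combine; since $w\in H^2_{\ell oc}(\mathbb R^2)$ its Dirichlet and Neumann traces are continuous across $\varGamma$, leaving only the jumps of $\Delta w'$ and $\partial_n\Delta w'$. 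Substituting the jump relations \eqref{eq:jump}, namely $[\Delta w']_\varGamma=q'_d$ and $[\partial_n\Delta w']_\varGamma=-q'_n$, collapses the boundary integral to $\int_\varGamma\big(q'_n\,w+q'_d\,\partial_n w\big)\ds=\langle\widehat{p'},\gamma_\varGamma w\rangle_\varGamma=\langle\widehat{p'},p\rangle_\varGamma$. By symmetry of $S_\varGamma$ this equals $\langle\widehat p,p'\rangle_\varGamma=(p,p')_{H(\varGamma)}$, which is \eqref{eq:zero_fct}.
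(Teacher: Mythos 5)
The paper itself gives no proof of this lemma to compare against: it is one of the results explicitly ``borrowed from \cite{Munnier:2023aa}'' and stated without proof, so your proposal has to be judged on its own merits. On those merits, the substantive part of your argument is correct and is surely the intended one. The translation of $p\in\mathscr A^\perp$ into the vanishing of $\int_\varGamma q_n\ds$ and $\int_\varGamma (y\,q_n+n\,q_d)\ds$ is right; these conditions do kill exactly the $|x|^2\ln|x|$ and $|x|\ln|x|$ blocks of the multipole expansion and yield $\Delta\mathscr S_\varGamma\widehat p=\mathscr O(|x|^{-2})$, $\nabla\Delta\mathscr S_\varGamma\widehat p=\mathscr O(|x|^{-3})$, hence $\Delta\mathscr S_\varGamma\widehat p\in L^2(\mathbb R^2)$. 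Your Green identity on $B_R$ cut along $\varGamma$, using continuity of the Dirichlet and Neumann traces of $w=\mathscr S_\varGamma\widehat p$ across $\varGamma$ (from $H^2_{\ell oc}$ regularity) and the jump relations \eqref{eq:jump} for $w'$, correctly collapses the interface terms to $\int_\varGamma\big(q'_n\,w+q'_d\,\partial_n w\big)\ds=\langle\widehat{p'},p\rangle_\varGamma=(p,p')_{H(\varGamma)}$. Two small points should be made explicit: discarding the $\partial B_R$ term requires bounds on $w$ and $\partial_n w$ there as well ($\mathscr O(\ln R)$ and $\mathscr O(R^{-1})$, which your own expansion provides, since $\Delta w'=\mathscr O(R^{-2})$ and $\partial_n\Delta w'=\mathscr O(R^{-3})$ alone would not suffice if $w$ grew like $R^{2}\ln R$), and for densities merely in $H'(\varGamma)$ one concludes by the same density argument as in the proof of Lemma~\ref{lem:inter}.

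The step you flag as ``the delicate part'' --- proving that $\mathscr S_\varGamma\widehat p$ itself lies in $L^2(\mathbb R^2)$ --- is not a gap you can close, and you were right to be suspicious: as literally written, that claim is false. The affine moment conditions do not touch the $\ln|x|$ and $\mathscr O(1)$ terms of the expansion; the $\ln|x|$ coefficient is $\frac1{8\pi}\langle\widehat p,\gamma_\varGamma|\cdot|^2\rangle_\varGamma$ (a pairing with a quadratic, not an affine, trace), and even when it vanishes the bounded term generically survives. Concretely, take $\varGamma$ the unit circle and $\widehat p=(q_n,q_d)=(\cos 2\theta,0)$: all three affine moments vanish, so $p=S_\varGamma\widehat p\in\mathscr A^\perp$, yet
\[
\mathscr S_\varGamma\widehat p(x)\;\longrightarrow\;\tfrac1{16}\cos 2\phi ,\qquad x=|x|(\cos\phi,\sin\phi),\ |x|\to+\infty,
\]
so $\mathscr S_\varGamma\widehat p\notin L^2(\mathbb R^2)$. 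The statement must be read with the Laplacians, i.e. $\Delta\big(\mathscr S_\varGamma\widehat p\big)$ and $\Delta\big(\mathscr S_\varGamma\widehat p'\big)$ belong to $L^2(\mathbb R^2)$ --- which is precisely what your second step establishes, what makes the right-hand side of \eqref{eq:zero_fct} meaningful, and all that is used later (e.g. in the proof of the theorem following Lemma~\ref{LEM:51}). With that reading, your proof is complete; do not spend effort on the ``far-field bookkeeping'' you announced, as it would be chasing a statement that is not true.
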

\begin{lemma}
\label{lem:inter}
Let $\varGamma$ and $\varGamma'$ be two sets of non-intersecting $\mathcal C^{1,1}$ Jordan curves. Then for every $p_\varGamma\in H(\varGamma)$ 
and $p_{\varGamma'}\in H(\varGamma')$, we have:
$$\big(\gamma_{\varGamma}\circ\mathscr S_{\varGamma'}\widehat p_{\varGamma'},S_\varGamma \widehat p_{\varGamma}\big)_{H(\varGamma)}=
\big(S_{\varGamma'} \widehat p_{\varGamma'},\gamma_{\varGamma'}\circ\mathscr S_{\varGamma}\widehat p_{\varGamma}\big)_{H(\varGamma')}.$$
\end{lemma}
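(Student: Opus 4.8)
The plan is to peel off the two scalar products and reduce the claim to a symmetric reciprocity statement for the fundamental solution $G$. Since $\widehat p_\varGamma=S_\varGamma^{-1}p_\varGamma$ gives $S_\varGamma\widehat p_\varGamma=p_\varGamma$, and since the scalar product $(p,p')_{H(\varGamma)}=\langle S_\varGamma^{-1}p,p'\rangle_\varGamma$ is symmetric (the operator $S_\varGamma$ being self-adjoint for the duality pairing, by the symmetry of its kernel together with the ellipticity of Theorem~\ref{theo:def_pos}), I would first rewrite
$$
\big(\gamma_{\varGamma}\circ\mathscr S_{\varGamma'}\widehat p_{\varGamma'},\,S_\varGamma\widehat p_{\varGamma}\big)_{H(\varGamma)}=\big\langle \widehat p_\varGamma,\,\gamma_\varGamma\circ\mathscr S_{\varGamma'}\widehat p_{\varGamma'}\big\rangle_\varGamma,
$$
and likewise the right-hand side as $\langle \widehat p_{\varGamma'},\gamma_{\varGamma'}\circ\mathscr S_\varGamma\widehat p_\varGamma\rangle_{\varGamma'}$. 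Thus it suffices to prove the reciprocity identity $\langle \widehat p_\varGamma,\gamma_\varGamma\circ\mathscr S_{\varGamma'}\widehat p_{\varGamma'}\rangle_\varGamma=\langle \widehat p_{\varGamma'},\gamma_{\varGamma'}\circ\mathscr S_\varGamma\widehat p_\varGamma\rangle_{\varGamma'}$, where $u=\mathscr S_\varGamma\widehat p_\varGamma$ and $v=\mathscr S_{\varGamma'}\widehat p_{\varGamma'}$.

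The essential point is that $\varGamma\cap\varGamma'=\varnothing$. Hence $v$ is biharmonic, so real-analytic, in a neighbourhood of $\varGamma$: its Dirichlet and Neumann traces on $\varGamma$ are the genuine restrictions of $v$ and $\partial_n v$, and the kernels $G(y,z)$, $\partial_{n(z)}G(y,z)$, $\partial_{n(y)}G(y,z)$, $\partial_{n(y)}\partial_{n(z)}G(y,z)$ are all smooth for $(y,z)\in\varGamma\times\varGamma'$, with no singularity. For densities $\widehat p_\varGamma\in L^2(\varGamma)\times L^2(\varGamma)$ and $\widehat p_{\varGamma'}\in L^2(\varGamma')\times L^2(\varGamma')$ I would then expand both pairings, via the definition \eqref{eq:def_Sq} of $\mathscr S$ and the pairing of each density component against the corresponding trace, as honest double integrals over the compact set $\varGamma\times\varGamma'$. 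After Fubini (legitimate since the integrands are continuous there), the two sides differ only in the order of the arguments in $G$ and in the directional derivatives. The symmetry of the kernel closes the argument: because $G$ is radial, $G(y,z)=G(z,y)$, and the parities of its derivatives, with the symmetry of the Hessian of $G$, give $\partial_{n(z)}G(y,z)=\partial_{n(z)}G(z,y)$, $\partial_{n(y)}G(y,z)=\partial_{n(y)}G(z,y)$ and $\partial_{n(y)}\partial_{n(z)}G(y,z)=\partial_{n(z)}\partial_{n(y)}G(z,y)$. Matching the four terms one by one shows the two double integrals coincide, so the identity holds for $L^2$ densities.

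To reach arbitrary $\widehat p_\varGamma\in H'(\varGamma)$ and $\widehat p_{\varGamma'}\in H'(\varGamma')$ I would close by density and continuity: $L^2(\varGamma)\times L^2(\varGamma)$ is dense in $H'(\varGamma)$, and both sides are bounded bilinear forms on $H'(\varGamma)\times H'(\varGamma')$. Boundedness holds because $\widehat p_{\varGamma'}\mapsto\gamma_\varGamma\circ\mathscr S_{\varGamma'}\widehat p_{\varGamma'}$ is continuous from $H'(\varGamma')$ into $H(\varGamma)$ — the potential maps boundedly into $H^2$ of a fixed neighbourhood of $\varGamma$ disjoint from $\varGamma'$, followed by the bounded trace — and because $\langle\cdot,\cdot\rangle_\varGamma$ is continuous on $H'(\varGamma)\times H(\varGamma)$; the equality on the dense $L^2$ subset then extends to the whole space. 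The main obstacle is precisely this distributional passage: when $\widehat p_\varGamma$ lives only in $H^{-3/2}\times H^{-1/2}$ the ``double integral'' is a priori only a formal nested pairing, and the disjointness of $\varGamma$ and $\varGamma'$ (which removes every singularity and makes the kernels smooth) together with the density-plus-continuity reduction is what makes the term-by-term matching rigorous.
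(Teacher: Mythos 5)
Your proof is correct and follows essentially the same route as the paper's: both reduce the two $H$-scalar products, via the definition $(p,p')_{H(\varGamma)}=\langle S_\varGamma^{-1}p,p'\rangle_\varGamma$ and its symmetry, to duality pairings of the densities against the non-singular cross-potentials, expand them as double integrals over $\varGamma\times\varGamma'$ for $L^2$ densities, apply Fubini (legitimate since the curves are disjoint, so the kernels $G$, $\partial_{n}G$, $\partial_n\partial_n G$ are smooth there, their symmetry being implicit in the paper's notation $G(x,y)=G(x-y)$), and conclude by density. Your write-up is simply more explicit than the paper's on the kernel symmetries and on the continuity needed to pass from $L^2$ densities to general elements of $H'(\varGamma)$ and $H'(\varGamma')$.
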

\begin{proof}
Let $\widehat p_{\varGamma}=(\widehat p_{\varGamma}^n,\widehat p_{\varGamma}^d)\in L^2(\varGamma)\times L^2(\varGamma)$ and $\widehat p_{\varGamma'}=(\widehat p_{\varGamma'}^n,\widehat p_{\varGamma'}^d)\in L^2(\varGamma')\times L^2(\varGamma')$. Then:
\begin{multline*}
\big(\gamma_{\varGamma}\circ\mathscr S_{\varGamma'}\widehat p_{\varGamma'},S_\varGamma \widehat p_{\varGamma}\big)_{H(\varGamma)}
=\int_{\varGamma}\left(\int_{\varGamma'}G(x,y)\widehat p^n_{\varGamma'}(y)+\partial_{n(y)}G(x,y)\widehat p_{\varGamma'}^d(y)\ds(y)\right)\widehat p_{\varGamma}^n(x) 
\\
+\left(\int_{\varGamma'}\partial_{n(x)}G(x,y)\widehat p^n_{\varGamma'}(y)+\partial_{n(x)}\partial_{n(y)}G(x,y)\widehat p_{\varGamma'}^d(y)\dy\right)\widehat p_{\varGamma}^d(x)\ds(x).
\end{multline*}
Since $\varGamma$ and $\varGamma'$ do not intersect, neither kernel is singular. So, we can reverse the order of integration
and conclude with a density argument.
\end{proof}
 \section{The reconstruction problem as a shape-from-moments problem}
 We return now to  Problem~\ref{pb2} with the notation introduced in Section~\ref{SEC:2}.
In Definition~\ref{eq:def_G}, we choose the constants $\kappa_0$ and $\kappa_1$ in such a way that the operators 
$S_{\varGamma_0}$ and $S_\varGamma$ be strongly elliptic as explained in Theorem~\ref{theo:def_pos}. The solution $\psi$ to System~\ref{sys_courant_1} can therefore be represented as a sum of biharmonic single-layer potentials:
$$\psi=\mathscr S_{\varGamma_0} \widehat p_{\varGamma_0}+\mathscr S_\varGamma \widehat p_\varGamma.$$
The densities $\widehat p_{\varGamma_0}=(\widehat p_{\varGamma_0}^n,\widehat p_{\varGamma_0}^d)\in H'(\varGamma_0)$ and $\widehat p_\varGamma
=(\widehat p_{\varGamma}^n,\widehat p_{\varGamma}^d)\in H'(\varGamma)$ 
satisfy the system:
\begin{subequations}
\label{sys:K}
\begin{empheq}[left=\empheqlbrace]{align}
\label{sys:Ka}
S_{\varGamma_0}\widehat p_{\varGamma_0}+\gamma_{\varGamma_0}\circ\mathscr S_{\varGamma}\widehat p_\varGamma&=f \quad \text{in }H(\varGamma_0)\\
\label{sys:Kb}
\gamma_{\varGamma}\circ\mathscr S_{\varGamma_0}\widehat p_{\varGamma_0}+S_\varGamma \widehat p_\varGamma&=(c,0)\quad \text{in }H(\varGamma).
\end{empheq}
In the second identity, $c$ is a function  equal to a real constant $c_j$ on each connected component $\varGamma_j$ ($j=1,\ldots,N$) of $\varGamma$. These constants are uniquely determined by the conditions:
\begin{equation}
\label{eq:nf}
\int_{\varGamma_j}\widehat p^n_\varGamma\ds=0,\qquad(j=1,\ldots,N).
\end{equation}
\end{subequations}
For every $j=1,\ldots,N$, let  $\mathbf 1_{\varGamma_j}$ be the piecewise constant function in $H(\varGamma)$, equal to $(1,0)$ on $\varGamma_j$ 
and $(0,0)$ on $\varGamma_k$ for $k\neq j$ and define 
the subspaces of $H'(\varGamma)$ and $H(\varGamma)$ of codimension $N$:
$${\mathcal H}'(\varGamma)=\big\{q\in H'(\varGamma)\,:\, \langle q,\mathbf 1_{\varGamma_j}\rangle_\varGamma=0,\quad\forall\, j=1,\ldots,N\big\}\quad\text{and}\quad
{\mathcal H}(\varGamma)=\big\{p\in H(\varGamma)\,:\, \langle \widehat{\mathbf 1}_
{\varGamma_j},p\rangle=0,\quad\forall\, j=1,\ldots,N\big\}.$$
Notice that identities \eqref{eq:nf} mean that $\widehat p_\varGamma$ belongs to ${\mathcal H}'(\varGamma)$ and that 
the operator $S_\varGamma$ isometrically maps ${\mathcal H}'(\varGamma)$ onto ${\mathcal H}(\varGamma)$. We introduce the operators:
\begin{equation}
\label{def_KGG}
\fct{K_{\varGamma_0}^{\varGamma}:H(\varGamma_0)}{H(\varGamma)}{p}{\gamma_{\varGamma}\circ \mathscr S_{\varGamma_0}\widehat p}
\qquad\text{and}\qquad
\fct{K_{\varGamma}^{\varGamma_0}:H(\varGamma)}{H(\varGamma_0)}{p}{\gamma_{\varGamma_0}\circ \mathscr S_{\varGamma}\widehat p,}
\end{equation}
and we deduce from Lemma~\ref{lem:inter} that:
\begin{equation}
\label{PRO:41}
\big(K_{\varGamma_0}^\varGamma p_{\varGamma_0},p_\varGamma\big)_{H(\varGamma)}=
\big(p_{\varGamma_0},K_{\varGamma}^{\varGamma_0} p_{\varGamma}\big)_{H(\varGamma_0)}\forallt 
p_\varGamma\in H(\varGamma),\,p_{\varGamma_0}\in H(\varGamma_0).
\end{equation}
We denote by 
$\Pi_{\varGamma}$ 
the orthogonal projection from $H(\varGamma)$ onto ${\mathcal H}(\varGamma)$ and we apply this operator to equation \eqref{sys:Kb}.
Rewriting System \eqref{sys:K} in terms of traces instead of densities
we obtain:
\begin{subequations}
\label{sys:K1}
\begin{empheq}[left=\empheqlbrace]{align}
\label{sys:K1a}
p_{\varGamma_0}+K_\varGamma^{\varGamma_0}   p_\varGamma&=f \quad \text{in }H(\varGamma_0)\\
\Pi_\varGamma\circ K_{\varGamma_0}^\varGamma p_{\varGamma_0}+p_\varGamma&=0\quad\text{in }{\mathcal H}(\varGamma).
\end{empheq}
\end{subequations}
From the identity \eqref{PRO:41}, specifying that $p_{\varGamma_0}$ is equal to $\mathbf 1_{\varGamma_0}$, we deduce that if 
$p_\varGamma$ is in ${\mathcal H}(\varGamma)$, then 
$K_\varGamma^{\varGamma_0}   p_\varGamma$ is in ${\mathcal H}(\varGamma_0)$. 
It follows from equation \eqref{sys:K1a} that if $f$ belongs to ${\mathcal H}(\varGamma_0)$, $p_{\varGamma_0}$ belongs to 
${\mathcal H}(\varGamma_0)$ as well. We conclude that for every 
$f\in{\mathcal H}(\varGamma_0)$, there exists $(p_{\varGamma_0},  p_{\varGamma})\in {\mathcal H}(\varGamma_0)
\times {\mathcal H}(\varGamma)$ such that:
\begin{subequations}
\label{sys:K2}
\begin{empheq}[left=\empheqlbrace]{align}
\label{sys:K2a}
p_{\varGamma_0}+K_\varGamma^{\varGamma_0}   p_\varGamma&=  f \quad \text{in }{\mathcal H}(\varGamma_0)\\
\label{sys:K2b}
\Pi_\varGamma\circ K_{\varGamma_0}^\varGamma   p_{\varGamma_0}+   p_\varGamma&=0 \quad\text{in }{\mathcal H}(\varGamma).
\end{empheq}
\end{subequations}
We now rewrite the measurement function \eqref{fct_Lambda} in terms of biharmonic single-layer potentials:
\begin{subequations}
\label{def_LAMB}
\begin{equation}
\fct{\varLambda_\varGamma:{\mathcal H}(\varGamma_0)}{{\mathcal H}'(\varGamma_0)}{  f}
{\Big(
\gamma^n_{\varGamma_0}\big(\Delta\mathscr S^+_{\varGamma}  \widehat p_{\varGamma}+\Delta \mathscr S^-_{\varGamma_0}  \widehat p_{\varGamma_0}\big)
,
-\gamma^d_{\varGamma_0}\big(\Delta\mathscr S^+_{\varGamma}  \widehat p_{\varGamma}+\Delta \mathscr S^-_{\varGamma_0}  \widehat p_{\varGamma_0}\big)
\Big),}
\end{equation}
where $(p_{\varGamma_0},p_\varGamma)$ is  the solution of  System~\eqref{sys:K2}. Defining also:
\begin{equation}
\label{def_lambda0}
\fct{\varLambda_0:{\mathcal H}(\varGamma_0)}{{\mathcal H}'(\varGamma_0)}{ f}
{\Big(
\gamma^n_{\varGamma_0}\big(\Delta\mathscr S^-_{\varGamma_0}\circ \widehat f\big),
-\gamma^d_{\varGamma_0}\big(\Delta\mathscr S^-_{\varGamma_0}\widehat  f\big)
\Big),}
\end{equation}
\end{subequations}
and using the identity $\mathscr S^+_{\varGamma_0}\widehat  f=\mathscr S^+_{\varGamma}  \widehat p_{\varGamma}+
\mathscr S^+_{\varGamma_0} \widehat p_{\varGamma_0}$, we first establish that, for every $f\in\mathcal H(\varGamma_0)$:
$$\big(\varLambda_\varGamma-\varLambda_0\big)f=
\Big(
-\big[\partial_n\Delta\mathscr S_{\varGamma_0}\widehat p_{\varGamma_0}\big]_{\varGamma_0}+\big[\partial_n\Delta\mathscr S_{\varGamma_0}\widehat f\big]_{\varGamma_0}
,
\big[\Delta\mathscr S_{\varGamma_0}\widehat p_{\varGamma_0}\big]_{\varGamma_0}-\big[\Delta\mathscr S_{\varGamma_0}\widehat f\big]_{\varGamma_0}
\Big).$$
Then, from the relations \eqref{eq:jump}, we deduce that 
$S_{\varGamma_0}\circ \big(\varLambda_\varGamma-\varLambda_0\big)f=p_{\varGamma_0}-f$
and therefore, denoting by $R_\varGamma$ the operator 
$S_{\varGamma_0}\circ \big(\varLambda_\varGamma-\varLambda_0\big)$, we finally obtain that:
\begin{subequations}
\begin{equation}
\label{eq:inv1}
\big({\rm Id}+R_\varGamma)f=p_{\varGamma_0}.
\end{equation}
On the other hand, 
according to \eqref{sys:K2a}, we have $p_{\varGamma_0}=f-K_\varGamma^{\varGamma_0}   p_\varGamma$
and applying the operator $K_{\varGamma}^{\varGamma_0}$ to \eqref{sys:K2b} yields 
$K_{\varGamma}^{\varGamma_0}  p_\varGamma=-K_{\varGamma}^{\varGamma_0}\circ\Pi_\varGamma\circ K_{\varGamma_0}^\varGamma   p_{\varGamma_0}$.
Therefore, denoting $K_\varGamma=K_{\varGamma}^{\varGamma_0} \circ\Pi_\varGamma\circ K_{\varGamma_0}^\varGamma$ we get:
\begin{equation}
\label{eq:inv2}
\big({\rm Id}-K_\varGamma\big)p_{\varGamma_0}=f.
\end{equation}
\end{subequations}
The theorem below follows directly from the identities \eqref{eq:inv1} and \eqref{eq:inv2}:
%
\begin{theorem}
\label{theo:fact}
The operators $\big({\rm Id}+R_\varGamma):\mathcal H(\varGamma_0)\longrightarrow\mathcal H(\varGamma_0)$ and $\big({\rm Id}-K_\varGamma\big):\mathcal H(\varGamma_0)\longrightarrow\mathcal H(\varGamma_0)$ are invertible and they are each other's inverse.
Moreover:
\begin{equation}
\label{eq:factor}
K_\varGamma=R_\varGamma\circ\big({\rm Id}+R_\varGamma\big)^{-1}.
\end{equation}
\end{theorem}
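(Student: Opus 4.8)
The plan is to read the two displayed identities \eqref{eq:inv1} and \eqref{eq:inv2} as the two one-sided relations for a single underlying bijection, namely the solution map of System~\eqref{sys:K2}. First I would record that, for each $f\in\mathcal H(\varGamma_0)$, System~\eqref{sys:K2} admits a \emph{unique} solution $(p_{\varGamma_0},p_\varGamma)\in\mathcal H(\varGamma_0)\times\mathcal H(\varGamma)$: existence was already obtained in the discussion preceding the theorem, and uniqueness follows from Proposition~\ref{PROP:1} together with the injectivity of the single-layer representation $\psi=\mathscr S_{\varGamma_0}\widehat p_{\varGamma_0}+\mathscr S_\varGamma\widehat p_\varGamma$ (equivalently, from the strong ellipticity of $S_{\varGamma_0}$ and $S_\varGamma$). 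This makes the map $A\colon f\mapsto p_{\varGamma_0}$ a well-defined bounded linear operator on $\mathcal H(\varGamma_0)$, and identity \eqref{eq:inv1} says precisely $A={\rm Id}+R_\varGamma$.

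Next I would establish the first composition. Combining \eqref{eq:inv1} and \eqref{eq:inv2} for the same datum $f$ gives $({\rm Id}-K_\varGamma)({\rm Id}+R_\varGamma)f=({\rm Id}-K_\varGamma)p_{\varGamma_0}=f$, whence $({\rm Id}-K_\varGamma)({\rm Id}+R_\varGamma)={\rm Id}$ on $\mathcal H(\varGamma_0)$. In particular $A={\rm Id}+R_\varGamma$ is injective and ${\rm Id}-K_\varGamma$ is surjective.

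The substantive step is the reverse composition, that is, the surjectivity of $A$. Here I would argue by an explicit backward construction: given an arbitrary $q\in\mathcal H(\varGamma_0)$, set $f:=({\rm Id}-K_\varGamma)q\in\mathcal H(\varGamma_0)$ and $p_\varGamma:=-\Pi_\varGamma\circ K_{\varGamma_0}^\varGamma q\in\mathcal H(\varGamma)$. Using $K_\varGamma=K_{\varGamma}^{\varGamma_0}\circ\Pi_\varGamma\circ K_{\varGamma_0}^\varGamma$, one checks that the pair $(q,p_\varGamma)$ satisfies \eqref{sys:K2a} (since $q+K_\varGamma^{\varGamma_0}p_\varGamma=q-K_\varGamma q=f$) and \eqref{sys:K2b} (by the very choice of $p_\varGamma$). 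By the uniqueness invoked above, $(q,p_\varGamma)$ \emph{is} the solution of System~\eqref{sys:K2} associated with $f$, so $q=Af=({\rm Id}+R_\varGamma)f=({\rm Id}+R_\varGamma)({\rm Id}-K_\varGamma)q$. As $q$ was arbitrary, $({\rm Id}+R_\varGamma)({\rm Id}-K_\varGamma)={\rm Id}$. Together with the previous paragraph, this shows that ${\rm Id}+R_\varGamma$ and ${\rm Id}-K_\varGamma$ are both invertible and mutually inverse.

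Finally, the factorization \eqref{eq:factor} is pure algebra: from $({\rm Id}+R_\varGamma)^{-1}={\rm Id}-K_\varGamma$ we get $K_\varGamma={\rm Id}-({\rm Id}+R_\varGamma)^{-1}=\big(({\rm Id}+R_\varGamma)-{\rm Id}\big)({\rm Id}+R_\varGamma)^{-1}=R_\varGamma\circ({\rm Id}+R_\varGamma)^{-1}$. I expect the main obstacle to be the surjectivity/backward step: while $({\rm Id}-K_\varGamma)({\rm Id}+R_\varGamma)={\rm Id}$ drops out immediately from \eqref{eq:inv1}--\eqref{eq:inv2}, promoting a one-sided inverse to a genuine two-sided inverse in this infinite-dimensional setting requires either the explicit backward construction above (hence the uniqueness for System~\eqref{sys:K2}) or, alternatively, a Fredholm argument showing that $K_\varGamma$ is compact --- which it is, since the kernels defining $K_{\varGamma_0}^\varGamma$ and $K_\varGamma^{\varGamma_0}$ are smooth on the disjoint curves $\varGamma_0$ and $\varGamma$, as already noted in the proof of Lemma~\ref{lem:inter} --- so that injectivity of ${\rm Id}+R_\varGamma$ alone forces invertibility.
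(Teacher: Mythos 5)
Your proof is correct and rests on exactly the two identities the paper invokes --- indeed the paper's entire proof is the single sentence that the theorem ``follows directly'' from \eqref{eq:inv1} and \eqref{eq:inv2}. What you add, rightly, is the observation that these identities by themselves only yield the one-sided relation $({\rm Id}-K_\varGamma)\circ({\rm Id}+R_\varGamma)={\rm Id}$, and that upgrading a left inverse to a two-sided inverse in this infinite-dimensional setting needs a further argument: either your backward construction combined with uniqueness for System~\eqref{sys:K2} (which is required precisely so that \eqref{eq:inv1}, derived in the paper for the solution coming from the Stokes problem, applies to the pair $(q,-\Pi_\varGamma\circ K_{\varGamma_0}^\varGamma q)$), or a Fredholm argument. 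The paper leaves all of this implicit. Two points to tighten in your write-up. First, the uniqueness for \eqref{sys:K2} is most cleanly justified by Proposition~\ref{PROP:1} together with the jump relations \eqref{eq:jump}: two solution pairs produce the same $\psi$ on $\mathcal F$; since the potentials are in $H^2_{\rm loc}(\mathbb R^2)$ and biharmonic inside the obstacles, the difference has vanishing Cauchy data on $\varGamma$ and hence vanishes in $\mathcal O$ as well, so its jumps across $\varGamma$ vanish, giving equality of the densities there, and invertibility of $S_{\varGamma_0}$ then finishes; your parenthetical ``equivalently, from the strong ellipticity of $S_{\varGamma_0}$ and $S_\varGamma$'' is not by itself a proof of injectivity of the two-curve representation restricted to $\mathcal F$. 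Second, in the Fredholm variant, what the one-sided identity hands you directly is the surjectivity of ${\rm Id}-K_\varGamma$; since $K_\varGamma$ is compact (smooth kernels on the disjoint curves $\varGamma_0$ and $\varGamma$), that operator has index zero, so surjectivity already forces its invertibility and then ${\rm Id}+R_\varGamma=({\rm Id}-K_\varGamma)^{-1}$; phrasing the step through injectivity of ${\rm Id}+R_\varGamma$ is unnecessary, and that operator is not obviously a compact perturbation of the identity. Either completion is sound: the backward construction stays entirely within the paper's framework, while the Fredholm route bypasses the uniqueness discussion at the price of the (easy) compactness claim.
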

From the point of view of solving the inverse problem we are dealing with, it is important to note that the right-hand member in the identity \eqref{eq:factor} 
can be computed from $\varLambda_\varGamma$ and $\varLambda_0$, two operators we assume to be known. We now turn our attention to the  operator $K_\varGamma$.
From equality \eqref{PRO:41}, we deduce straightforwardly:
\begin{theorem}
\label{Theo:52}
For every $f,g\in \mathcal H(\varGamma_0)$:
\begin{equation}
\label{eq:keystone}
\big(K_\varGamma f,g\big)_{H(\varGamma_0)}=\Big(  \Pi_\varGamma\circ K_{\varGamma_0}^\varGamma f,
K_{\varGamma_0}^\varGamma g\Big)_{H(\varGamma)}.
\end{equation}
\end{theorem}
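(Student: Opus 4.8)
The plan is to unfold the definition $K_\varGamma=K_{\varGamma}^{\varGamma_0}\circ\Pi_\varGamma\circ K_{\varGamma_0}^\varGamma$ and move the operator $K_{\varGamma}^{\varGamma_0}$ across the inner product using the adjointness relation \eqref{PRO:41}. Concretely, for $f,g\in\mathcal H(\varGamma_0)$ I would write $\big(K_\varGamma f,g\big)_{H(\varGamma_0)}=\big(K_{\varGamma}^{\varGamma_0}\bigl(\Pi_\varGamma\circ K_{\varGamma_0}^\varGamma f\bigr),g\big)_{H(\varGamma_0)}$ and apply \eqref{PRO:41} with $p_\varGamma=\Pi_\varGamma\circ K_{\varGamma_0}^\varGamma f\in H(\varGamma)$ and $p_{\varGamma_0}=g\in H(\varGamma_0)$. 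That identity reads $\big(K_{\varGamma_0}^\varGamma p_{\varGamma_0},p_\varGamma\big)_{H(\varGamma)}=\big(p_{\varGamma_0},K_{\varGamma}^{\varGamma_0}p_\varGamma\big)_{H(\varGamma_0)}$, so reading it from right to left converts the $H(\varGamma_0)$ inner product back into an $H(\varGamma)$ inner product, yielding $\big(K_\varGamma f,g\big)_{H(\varGamma_0)}=\big(\Pi_\varGamma\circ K_{\varGamma_0}^\varGamma f,\,K_{\varGamma_0}^\varGamma g\big)_{H(\varGamma)}$, which is exactly \eqref{eq:keystone}.

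The one genuinely non-routine point is the appearance and placement of the projection $\Pi_\varGamma$. After transporting $K_{\varGamma}^{\varGamma_0}$ through \eqref{PRO:41}, the right-hand side is the $H(\varGamma)$ inner product of $\Pi_\varGamma\circ K_{\varGamma_0}^\varGamma f$ with the \emph{unprojected} element $K_{\varGamma_0}^\varGamma g$, whereas the claimed statement pairs $\Pi_\varGamma\circ K_{\varGamma_0}^\varGamma f$ with $K_{\varGamma_0}^\varGamma g$ directly. These agree precisely because $\Pi_\varGamma$ is the orthogonal projection of $H(\varGamma)$ onto $\mathcal H(\varGamma)$: since $\Pi_\varGamma\circ K_{\varGamma_0}^\varGamma f$ already lies in $\mathcal H(\varGamma)$ and $\Pi_\varGamma$ is self-adjoint and idempotent for the $(\cdot,\cdot)_{H(\varGamma)}$ inner product, we have $\big(\Pi_\varGamma\circ K_{\varGamma_0}^\varGamma f,\,K_{\varGamma_0}^\varGamma g\big)_{H(\varGamma)}=\big(\Pi_\varGamma\circ K_{\varGamma_0}^\varGamma f,\,\Pi_\varGamma\circ K_{\varGamma_0}^\varGamma g\big)_{H(\varGamma)}$; the symmetric form on the right is what makes the expression manifestly symmetric in $f$ and $g$, so either form is acceptable. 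Thus the apparent asymmetry is harmless and the projection can be inserted or dropped on the second factor at will.

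The main obstacle I anticipate is therefore purely bookkeeping: ensuring that the element to which \eqref{PRO:41} is applied genuinely lies in the domains where the identity holds, and tracking the self-adjointness of $\Pi_\varGamma$. Since \eqref{PRO:41} is stated for arbitrary $p_\varGamma\in H(\varGamma)$ and $p_{\varGamma_0}\in H(\varGamma_0)$, no restriction to the codimension-$N$ subspaces is needed to invoke it, and the fact that $f,g\in\mathcal H(\varGamma_0)$ plays no essential role in the derivation beyond guaranteeing that $K_\varGamma f$ is well defined as a self-map of $\mathcal H(\varGamma_0)$. The computation is a single application of \eqref{PRO:41} followed by the elementary properties of the orthogonal projection, so the proof is short and the word ``straightforwardly'' in the preamble is justified.
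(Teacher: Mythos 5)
Your proof is correct and follows essentially the same route as the paper, which simply deduces the identity ``straightforwardly'' from \eqref{PRO:41}: unfold $K_\varGamma=K_{\varGamma}^{\varGamma_0}\circ\Pi_\varGamma\circ K_{\varGamma_0}^\varGamma$, apply the adjointness relation \eqref{PRO:41} with $p_\varGamma=\Pi_\varGamma\circ K_{\varGamma_0}^\varGamma f$ and $p_{\varGamma_0}=g$, and use symmetry of the real scalar products. Your remark on inserting or dropping $\Pi_\varGamma$ on the second factor is also consistent with how the paper uses the identity later (where both factors appear projected), though it is not needed to match the stated form of \eqref{eq:keystone}.
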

This theorem is the keystone of the reconstruction method. Indeed, 
let $F$ et $G$ be biharmonic functions in $H^2(\varOmega)$, denote $f_{\varGamma_0}=\gamma_{\varGamma_0} F$ and 
$g_{\varGamma_0}=\gamma_{\varGamma_0} G$ and assume that $f_{\varGamma_0}$ and $g_{\varGamma_0}$ 
are in $\mathcal H(\varGamma_0)$ (simply add   an appropriate constant to $F$ and $G$ to satisfy this assumption). Since $\mathscr S_{\varGamma_0}\widehat f_{\varGamma_0}=F$ and $\mathscr S_{\varGamma_0}\widehat g_{\varGamma_0}=G$ in $\varOmega$, we deduce from \eqref{eq:keystone} that:
\begin{equation}
\big(K_\varGamma f_{\varGamma_0},g_{\varGamma_0}\big)_{H(\varGamma_0)}=
\big(\Pi_\varGamma f_{\varGamma},g_{\varGamma}\big)_{H(\varGamma)},
\end{equation}
where $f_\varGamma=\gamma_\varGamma F$ and $g_\varGamma=\gamma_\varGamma G$.  In other words,   we have access  
to the scalar product in $\mathcal H(\varGamma)$ of the traces (up to an additive constant because of the projector $\Pi_\varGamma$) of any two functions $F$ and $G$ that are biharmonic in $\varOmega$. This
 will enable us to calculate the complex moments of $\mathcal O$.
\par
Let us denote by $\mathscr H(\varOmega)$ the space of the harmonic functions in $H^2(\varOmega)$.
\begin{lemma}
\label{LEM:51}
Let $p$ be in $\mathcal H(\varGamma)$ such that $\big(K_{\varGamma}^{\varGamma_0}p,\gamma_{\varGamma_0}h\big)_{H(\varGamma_0)}=0$ 
for every function $h$ in $\mathscr H(\varOmega)$. Then $\mathscr S_{\varGamma}\widehat p$ is harmonic outside $\mathcal O$.
\end{lemma}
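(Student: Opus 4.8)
The plan is to push the hypothesis from the outer curve $\varGamma_0$ onto the obstacle boundary $\varGamma$, to recognise that $\Delta(\mathscr S_\varGamma\widehat p)$ is a harmonic field on $\mathbb R^2\setminus\overline{\mathcal O}$ that vanishes at infinity, and finally to force this field to be zero by testing it against a complete family of harmonic fields.

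First I would transfer the orthogonality condition. Since every $h\in\mathscr H(\varOmega)$ is in particular biharmonic, the reproducing identity $\mathscr S_{\varGamma_0}\widehat{\gamma_{\varGamma_0}h}=h$ in $\varOmega$ holds, and therefore $K_{\varGamma_0}^\varGamma\gamma_{\varGamma_0}h=\gamma_\varGamma\mathscr S_{\varGamma_0}\widehat{\gamma_{\varGamma_0}h}=\gamma_\varGamma h$, because $\varGamma\subset\varOmega$. Applying the adjunction \eqref{PRO:41} with $p_{\varGamma_0}=\gamma_{\varGamma_0}h$ and $p_\varGamma=p$ then turns the hypothesis into
\[
\big(\gamma_\varGamma h,\,p\big)_{H(\varGamma)}=0\qquad\text{for every }h\in\mathscr H(\varOmega).
\]
The additive constant hidden in $\gamma_{\varGamma_0}h$ is harmless here, since $K_\varGamma^{\varGamma_0}p\in\mathcal H(\varGamma_0)$ is already orthogonal to $\mathbf 1_{\varGamma_0}$.

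Next I set $u=\mathscr S_\varGamma\widehat p$ and $v=\Delta u$. As $u$ is biharmonic off $\varGamma$, $v$ is harmonic on the connected open set $\mathbb R^2\setminus\overline{\mathcal O}$ (no jump occurs across $\varGamma_0$), and the condition $p\in\mathcal H(\varGamma)$, i.e. $\int_{\varGamma_j}\widehat p_n\ds=0$ for every $j$, removes the logarithmic term so that $v\to0$ at infinity; the target is $v\equiv0$ on $\mathbb R^2\setminus\overline{\mathcal O}$. For each $h$ I introduce $w=\mathscr S_\varGamma\widehat{\gamma_\varGamma h}$: the Dirichlet and Neumann traces of a biharmonic single-layer potential being continuous across $\varGamma$, the uniqueness of the biharmonic Cauchy problem gives $w=h$ in $\mathcal O$, hence $\Delta w=0$ there. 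Writing the inner product as a volume integral through a Green's identity (equivalently \eqref{eq:zero_fct} applied to the part of the data lying in $\mathscr A^\perp$, the affine part of $\gamma_\varGamma h$ being harmonic and contributing nothing in $\mathcal O$), the displayed orthogonality becomes
\[
\int_{\mathbb R^2\setminus\overline{\mathcal O}}\Delta w\,\Delta u\,{\rm d}m=0\qquad\text{for every }h\in\mathscr H(\varOmega),
\]
the integral over $\mathcal O$ dropping out because $\Delta w=0$ there.

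The hard part will be the concluding completeness step. The fields $\Delta w=\Delta(\mathscr S_\varGamma\widehat{\gamma_\varGamma h})$ are harmonic on $\mathbb R^2\setminus\overline{\mathcal O}$ and decay at infinity, and I would show that, as $h$ ranges over $\mathscr H(\varOmega)$, they approximate $v$ itself. The mechanism I expect is a Calder\'on/Runge-type decomposition of Cauchy data on $\varGamma$: starting from a biharmonic $W$ in $\mathbb R^2\setminus\overline{\mathcal O}$ with $\Delta W=v$, one corrects its trace by a function $\eta$ harmonic in $\mathbb R^2\setminus\overline{\mathcal O}$, which leaves $\Delta W$ untouched since $\Delta\eta=0$, so that $\gamma_\varGamma(W+\eta)$ becomes the trace of a function harmonic in $\varOmega$; this exhibits $v$ as a limit of admissible $\Delta w$, whence $\int_{\mathbb R^2\setminus\overline{\mathcal O}}v^2\,{\rm d}m=0$ and $v\equiv0$, i.e. $\mathscr S_\varGamma\widehat p$ is harmonic outside $\mathcal O$. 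Making this density argument precise, while simultaneously keeping every integral convergent at infinity — the integrands decay only like $|x|^{-2}$, so one must work within $\mathscr A^\perp$ to recover the missing decay — is the crux of the proof.
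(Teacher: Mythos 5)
Your reduction steps are correct, and in fact more robust than you fear: via \eqref{PRO:41} and the identity $K_{\varGamma_0}^{\varGamma}\gamma_{\varGamma_0}h=\gamma_{\varGamma}h$ (uniqueness of the biharmonic Dirichlet problem in $\varOmega$), the hypothesis becomes $\big(\gamma_\varGamma h,p\big)_{H(\varGamma)}=0$ for all $h\in\mathscr H(\varOmega)$; since affine functions themselves belong to $\mathscr H(\varOmega)$, this already forces $p\in\mathscr A^\perp$, and replacing $h$ by $h-a$, where $\gamma_\varGamma a$ is the $\mathscr A$-component of $\gamma_\varGamma h$, puts the other argument in $\mathscr A^\perp$ as well, so \eqref{eq:zero_fct} applies and your exterior orthogonality $\int_{\mathbb R^2\setminus\overline{\mathcal O}}\Delta w\,\Delta u\,{\rm d}m=0$ is legitimate. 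The genuine gap is the completeness step, which you explicitly leave unproved, and the mechanism you sketch for it would not work as stated, for two reasons. First, the correction cannot be exact: a Calder\'on-type decomposition of $\gamma_\varGamma W$ produces the Cauchy data of a function harmonic in $\mathcal O$ only, and upgrading ``harmonic in $\mathcal O$'' to ``harmonic in all of $\varOmega$'' is a Runge/Walsh approximation theorem, needed here in the $H^{3/2}\times H^{1/2}$ norm, which you neither formulate nor prove. Second, and more fundamentally, even granting $\gamma_\varGamma(W+\eta)=\gamma_\varGamma h$ with $h\in\mathscr H(\varOmega)$, it does not follow that $\Delta\big(\mathscr S_\varGamma\widehat{\gamma_\varGamma h}\big)$ equals, or even approximates, $v$ outside $\mathcal O$: two biharmonic functions in $\mathbb R^2\setminus\overline{\mathcal O}$ with the same Dirichlet and Neumann traces on $\varGamma$ need not coincide (for instance $r^2-1-2\ln r$ is biharmonic outside the unit disk with vanishing Cauchy data on the unit circle), so identifying $\mathscr S_\varGamma\widehat{\gamma_\varGamma h}$ with $W+\eta$ in the exterior requires a uniqueness-with-decay argument at infinity that is absent from your sketch.

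The missing step can actually be bypassed entirely, within your own first step and with no density argument at all. For $x\in\mathbb R^2\setminus\overline{\varOmega}$, the function $h_x=\Delta G(x-\cdot)=\tfrac{1}{2\pi}\big[\ln\big(|x-\cdot|/\kappa_0\big)+1\big]$ is smooth on $\overline{\varOmega}$ and harmonic in $\varOmega$, hence lies in $\mathscr H(\varOmega)$. Since $S_\varGamma$ is an isometry and the scalar product on $H'(\varGamma)$ is symmetric, $\big(\gamma_\varGamma h_x,p\big)_{H(\varGamma)}=\big(\widehat p,\widehat{\gamma_\varGamma h_x}\big)_{H'(\varGamma)}=\big\langle\widehat p,\gamma_\varGamma h_x\big\rangle_\varGamma=\int_\varGamma \Delta_xG(x,y)\,\widehat p^n(y)+\partial_{n(y)}\Delta_xG(x,y)\,\widehat p^d(y)\,{\rm d}s(y)=\Delta\big(\mathscr S_\varGamma\widehat p\big)(x)$, the kernels being non-singular since $x\notin\overline{\varOmega}$. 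So your transferred hypothesis, evaluated at the family $\{h_x\}$, says verbatim that $\Delta\mathscr S_\varGamma\widehat p$ vanishes on $\mathbb R^2\setminus\overline{\varOmega}$; as this function is harmonic on the connected open set $\mathbb R^2\setminus\overline{\mathcal O}$, unique continuation gives the conclusion. For comparison, the paper takes yet another route that also avoids any completeness argument: it stays on $\varGamma_0$, uses the jump relations and the self-adjointness of the Dirichlet-to-Neumann operator to glue the exterior field $\Delta\mathscr S^+_{\varGamma_0}\widehat g_{\varGamma_0}$ (with $g_{\varGamma_0}=K_\varGamma^{\varGamma_0}p$) to an interior $L^2$ harmonic function (Proposition~\ref{PROP:L2H}) into an entire harmonic function, kills it with the generalized Liouville theorem and the $\mathscr O(\ln|x|)$ growth bound, and then concludes by unique continuation as above.
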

\begin{proof}Let $D_\varOmega$ stands for the Dirichlet-to-Neumann operator on $\varGamma_0$ (see Section~\ref{SEC:DtN} in the Appendix), 
let $p$ be in $\mathcal H(\varGamma)$ as in the statement of the Lemma and denote $g_{\varGamma_0}=K_{\varGamma}^{\varGamma_0}p$. 
The hypothesis means that:
$$-\big\langle \big[\partial_n \Delta\mathscr S_{\varGamma_0}\widehat g_{\varGamma_0}\big]_{\varGamma_0},q\big\rangle_{-\frac32,\frac32}+
\big\langle \big[ \Delta\mathscr S_{\varGamma_0}\widehat g_{\varGamma_0}\big]_{\varGamma_0},D_\varOmega q\big\rangle_{-\frac12,\frac12}
=0\forallt q\in H^{3/2}(\varGamma_0).$$
The operator $D_\varOmega$ being self-adjoint (see Proposition~\ref{prop:B1}), it follows that:
$$\big\langle \gamma_{\varGamma_0}^n \Delta\mathscr S^-_{\varGamma_0}\widehat g_{\varGamma_0}-D_\varOmega 
\Delta\mathscr S^-_{\varGamma_0}\widehat g_{\varGamma_0},q\big\rangle_{-\frac32,\frac32}
-\big\langle \gamma_{\varGamma_0}^n
\Delta\mathscr S^+_{\varGamma_0}\widehat g_{\varGamma_0}-D_\varOmega\Delta\mathscr S^+_{\varGamma_0}\widehat g_{\varGamma_0},
q\big\rangle_{-\frac12,\frac12}=0\forallt q\in H^{3/2}(\varGamma_0).$$
 The first term vanishes by definition of $D_\varOmega$, which allows us to deduce that  $\gamma_{\varGamma_0}^n
\Delta\mathscr S^+_{\varGamma_0}\widehat g_{\varGamma_0}=D_\varOmega\Delta\mathscr S^+_{\varGamma_0}\widehat g_{\varGamma_0}$.
Let $u_\varOmega$ be the function in $L^2(\varOmega)$, harmonic and such that $\gamma_{\varGamma_0}^d u_\varOmega=\gamma_{\varGamma_0}^d 
\Delta\mathscr S^+_{\varGamma_0}\widehat g_{\varGamma_0}$ (existence and uniqueness of such a function 
is asserted in Proposition~\ref{PROP:L2H}). The function $u$ defined by $u=u_\varOmega$ 
in $\varOmega$ and $u=\Delta\mathscr S^+_{\varGamma_0}\widehat g_{\varGamma_0}$ in $\mathbb R^2\setminus\overline{\varOmega}$ 
is therefore harmonic in $\mathbb R^2$. According to \cite[Lemma 4.1]{Munnier:2023aa}, $u(x)=\mathscr O(\ln|x|)$ as 
$|x|$ goes to $+\infty$, which together with \cite[Theorem 9.10]{Axler:2001aa} (generalized Liouville Theorem) implies 
that $u$ is constant in $\mathbb R^2$. Referring again to \cite[Lemma 4.1]{Munnier:2023aa}, the only possible constant is zero. Since 
$\mathscr S^+_\varGamma p=\mathscr S^+_{\varGamma_0}\widehat g_{\varGamma_0}$ in 
$\mathbb R^2\setminus\overline{\varOmega}$, we conclude that $\Delta \mathscr S^+_\varGamma  p=0$ in $\mathbb R^2
\setminus\overline{\mathcal O}$. 
\end{proof}
\begin{theorem}
Let $h$  be a harmonic function in $L^2(\varOmega)$. For every $\varepsilon>0$ there exists $f_{\varGamma_0}^\varepsilon$   in $\mathcal H(\varGamma_0)$
such that
$$\left|\big(K_\varGamma f_{\varGamma_0}^\varepsilon,f_{\varGamma_0}^\varepsilon\big)_{H(\varGamma_0)}- 
\int_{\mathcal O} h^2{\rm d}m\right|<\varepsilon.$$
\end{theorem}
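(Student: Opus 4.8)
The plan is to exploit the "keystone" identity in Theorem~\ref{Theo:52}, which expresses $\big(K_\varGamma f_{\varGamma_0},g_{\varGamma_0}\big)_{H(\varGamma_0)}$ in terms of boundary traces on $\varGamma$, together with the remark immediately following it: if $F$ is biharmonic in $\varOmega$ with $\gamma_{\varGamma_0}F=f_{\varGamma_0}\in\mathcal H(\varGamma_0)$, then $\big(K_\varGamma f_{\varGamma_0},f_{\varGamma_0}\big)_{H(\varGamma_0)}=\big(\Pi_\varGamma f_\varGamma,f_\varGamma\big)_{H(\varGamma)}$ where $f_\varGamma=\gamma_\varGamma F$. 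The target quantity is $\int_{\mathcal O}h^2\,{\rm d}m$, so the idea is to produce biharmonic functions $F^\varepsilon$ whose trace scalar product on $\varGamma$ reproduces this integral in the limit. Using the $L^2$-representation of the $H(\varGamma)$ scalar product from equation~\eqref{eq:zero_fct}, namely $(p,p')_{H(\varGamma)}=\int_{\mathbb R^2}\Delta(\mathscr S_\varGamma\widehat p)\,\Delta(\mathscr S_\varGamma\widehat p')\,{\rm d}m$, I would aim to arrange matters so that $\Delta\mathscr S_\varGamma\widehat{f_\varGamma}$ behaves like the harmonic function $h$ inside $\mathcal O$ and vanishes outside, thereby turning the $\mathbb R^2$-integral into $\int_{\mathcal O}h^2\,{\rm d}m$.

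The key steps, in order, are as follows. First, I would seek a density $p\in\mathcal H(\varGamma)$ whose potential $\mathscr S_\varGamma\widehat p$ is harmonic outside $\mathcal O$ and whose Laplacian inside $\mathcal O$ equals the prescribed $h$: this is precisely the configuration characterized by Lemma~\ref{LEM:51}, which says that the orthogonality condition $\big(K_\varGamma^{\varGamma_0}p,\gamma_{\varGamma_0}h\big)_{H(\varGamma_0)}=0$ for all harmonic $h$ forces $\mathscr S_\varGamma\widehat p$ to be harmonic outside $\mathcal O$. Second, I would relate such a $p$ to a biharmonic extension: the natural candidate $F$ satisfies $\Delta F=h$ in $\mathcal O$ and $\Delta F=0$ in $\mathcal F$, and its trace $f_\varGamma=\gamma_\varGamma F$ would then, via \eqref{eq:zero_fct}, produce $\big(\Pi_\varGamma f_\varGamma,f_\varGamma\big)_{H(\varGamma)}=\int_{\mathcal O}(\Delta F)^2\,{\rm d}m=\int_{\mathcal O}h^2\,{\rm d}m$ once the exterior contribution is killed. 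Third, since the keystone identity only gives access to $\big(K_\varGamma f_{\varGamma_0},f_{\varGamma_0}\big)_{H(\varGamma_0)}$ for $f_{\varGamma_0}=\gamma_{\varGamma_0}F$ with $F$ biharmonic in all of $\varOmega$ and $f_{\varGamma_0}\in\mathcal H(\varGamma_0)$, I would approximate the desired (possibly non-globally-biharmonic) configuration by genuinely biharmonic functions $F^\varepsilon$ on $\varOmega$. The approximation is driven by density: harmonic functions in $L^2(\varOmega)$ can be approximated by restrictions of entire (or $\varOmega$-wide) harmonic functions, and correspondingly the biharmonic functions whose traces land in $\mathcal H(\varGamma_0)$ form a sufficiently rich family. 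Taking $F^\varepsilon$ biharmonic in $\varOmega$ with $\Delta F^\varepsilon$ approximating $h$ in $L^2(\varOmega)$ (hence in $L^2(\mathcal O)$), setting $f_{\varGamma_0}^\varepsilon=\gamma_{\varGamma_0}F^\varepsilon$ after subtracting a constant to enforce membership in $\mathcal H(\varGamma_0)$, should give $\big(K_\varGamma f_{\varGamma_0}^\varepsilon,f_{\varGamma_0}^\varepsilon\big)_{H(\varGamma_0)}=\int_{\mathcal O}(\Delta F^\varepsilon)^2\,{\rm d}m$, which converges to $\int_{\mathcal O}h^2\,{\rm d}m$.

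I would control the error by splitting $\int_{\mathcal O}(\Delta F^\varepsilon)^2\,{\rm d}m-\int_{\mathcal O}h^2\,{\rm d}m=\int_{\mathcal O}(\Delta F^\varepsilon-h)(\Delta F^\varepsilon+h)\,{\rm d}m$ and estimating it by Cauchy--Schwarz: if $\|\Delta F^\varepsilon-h\|_{L^2(\mathcal O)}$ is small while $\|\Delta F^\varepsilon+h\|_{L^2(\mathcal O)}$ stays bounded, the difference is $O(\varepsilon)$. This reduces the whole statement to the approximation claim that for any harmonic $h\in L^2(\varOmega)$ there exist biharmonic functions $F^\varepsilon$ on $\varOmega$ with $\Delta F^\varepsilon\to h$ in $L^2(\varOmega)$; but this is immediate, since any harmonic $h$ is itself the Laplacian of a biharmonic function (take $F$ with $\Delta F=h$, which is biharmonic precisely because $\Delta^2 F=\Delta h=0$), so in fact one may often take $F^\varepsilon=F$ directly and dispense with the limit.

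The main obstacle I anticipate is the bookkeeping that links the two distinct ways the scalar product $\big(\Pi_\varGamma f_\varGamma,f_\varGamma\big)_{H(\varGamma)}$ can be read: the keystone identity demands that $F$ be globally biharmonic in $\varOmega$ (so that $\mathscr S_{\varGamma_0}\widehat f_{\varGamma_0}=F$ there), whereas the clean evaluation $\int_{\mathcal O}h^2\,{\rm d}m$ via \eqref{eq:zero_fct} wants $\Delta\mathscr S_\varGamma\widehat{f_\varGamma}$ to equal $h$ on $\mathcal O$ and vanish outside $\mathcal O$. Reconciling these through the projector $\Pi_\varGamma$ and verifying that the exterior/annular contributions genuinely vanish (invoking Lemma~\ref{LEM:51} and the decay estimates used in its proof, \cite[Lemma 4.1]{Munnier:2023aa}) is the delicate point. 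The role of $\varepsilon$ is precisely to avoid having to realize the idealized configuration exactly; instead one approximates within the admissible class of traces $f_{\varGamma_0}^\varepsilon$ for which the keystone identity literally applies, and passes to the limit. Once the approximation and the orthogonality/decay arguments are in place, the estimate $\big|\big(K_\varGamma f_{\varGamma_0}^\varepsilon,f_{\varGamma_0}^\varepsilon\big)_{H(\varGamma_0)}-\int_{\mathcal O}h^2\,{\rm d}m\big|<\varepsilon$ follows directly.
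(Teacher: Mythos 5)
Your proposal assembles the correct ingredients (Theorem~\ref{Theo:52}, Lemma~\ref{LEM:51}, identity \eqref{eq:zero_fct}), but the step you present as a harmless ``reduction'' is exactly where the argument fails, and it is the difficulty the paper's proof is built to overcome. Take $F$ biharmonic in $\varOmega$ with $\Delta F=h$ and $f_{\varGamma_0}=\gamma_{\varGamma_0}F\in\mathcal H(\varGamma_0)$. The keystone identity combined with \eqref{eq:zero_fct} gives
\begin{equation*}
\big(K_\varGamma f_{\varGamma_0},f_{\varGamma_0}\big)_{H(\varGamma_0)}
=\big\|\Pi_\varGamma \gamma_\varGamma F\big\|^2_{H(\varGamma)}
=\int_{\mathbb R^2}\big|\Delta\mathscr S_\varGamma\widehat p\,\big|^2\,{\rm d}m,
\qquad p=\Pi_\varGamma\gamma_\varGamma F.
\end{equation*}
Inside $\mathcal O$ the potential $\mathscr S_\varGamma\widehat p$ does coincide with $F$ up to constants (uniqueness of the biharmonic Cauchy problem), so the contribution of $\mathcal O$ is indeed $\int_{\mathcal O}h^2\,{\rm d}m$. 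But nothing makes $\mathscr S_\varGamma\widehat p$ harmonic in $\mathbb R^2\setminus\overline{\mathcal O}$: the projector $\Pi_\varGamma$ only removes the piecewise-constant components $\mathbf 1_{\varGamma_j}$, and by Lemma~\ref{LEM:51} the exterior part of the integral vanishes precisely when $p$ is orthogonal to $K_{\varGamma_0}^\varGamma\circ\gamma_{\varGamma_0}h'$ for \emph{every} $h'\in\mathscr H(\varOmega)$ --- an orthogonality that fails for a generic biharmonic $F$, and whose failure even depends on which $F$ with $\Delta F=h$ you choose ($F$ is only determined up to a harmonic function). Consequently your claimed identity $\big(K_\varGamma f_{\varGamma_0}^\varepsilon,f_{\varGamma_0}^\varepsilon\big)_{H(\varGamma_0)}=\int_{\mathcal O}(\Delta F^\varepsilon)^2\,{\rm d}m$ is false in general (the left side exceeds the right by the exterior contribution), and the statement does \emph{not} reduce to approximating $h$ by Laplacians of biharmonic functions --- that observation is, as you note, trivial, which should itself be a warning that it cannot be the content of the theorem. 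You flag the issue at the end as ``the delicate point'' to be reconciled, but that reconciliation \emph{is} the proof; deferring it leaves the argument without its core.

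The paper resolves it as follows. Choose $g_{\varGamma_0}$ with $\Delta\mathscr S^-_{\varGamma_0}\widehat g_{\varGamma_0}=h$, and let $g^h_\varGamma$ be the orthogonal projection of $K_{\varGamma_0}^\varGamma g_{\varGamma_0}$ onto the \emph{closure} $\overline{K_{\varGamma_0}^\varGamma\mathscr H(\varOmega)}$ in $H(\varGamma)$. The residual $K_{\varGamma_0}^\varGamma g_{\varGamma_0}-g^h_\varGamma$ is then orthogonal to all of $K_{\varGamma_0}^\varGamma\mathscr H(\varOmega)$, which via \eqref{PRO:41} is exactly the hypothesis of Lemma~\ref{LEM:51}; hence its potential is harmonic outside $\mathcal O$, while inside $\mathcal O$ its Laplacian is still $h$ (subtracting $g^h_\varGamma$ changes nothing there, since potentials of traces of harmonic functions are harmonic in $\mathcal O$). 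Identity \eqref{eq:zero_fct} then yields the \emph{exact} equality $\big\|K_{\varGamma_0}^\varGamma g_{\varGamma_0}-g^h_\varGamma\big\|^2_{H(\varGamma)}=\int_{\mathcal O}h^2\,{\rm d}m$. The $\varepsilon$ in the statement enters only because $g^h_\varGamma$ lies in a closure: one picks $h^\varepsilon\in\mathscr H(\varOmega)$ with $K_{\varGamma_0}^\varGamma\circ\gamma_{\varGamma_0}h^\varepsilon$ close to $g^h_\varGamma$ (estimate \eqref{eq:est}) and sets $f_{\varGamma_0}^\varepsilon=\Pi_{\varGamma_0}\big(g_{\varGamma_0}-\gamma_{\varGamma_0}h^\varepsilon\big)$. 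So the approximation in the theorem is an approximation of the projection by genuine harmonic functions --- the subtraction of $\gamma_{\varGamma_0}h^\varepsilon$ being what (nearly) kills the exterior term --- not an approximation of $h$ by Laplacians of biharmonic functions, which is where your proposal misplaces the $\varepsilon$.
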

\begin{proof}
Let $g_{\varGamma_0}$ be in $ H(\varGamma_0)$ such that $\Delta \mathscr S^-_{\varGamma_0}\widehat g_{\varGamma_0}=h$ (for instance choose $g_{\varGamma_0}=\gamma_{\varGamma_0}u$ with $u$ such that $\Delta u=h$ in $H^2(\varOmega)\cap H^1_0(\varOmega)$). 
Let $g^h_\varGamma$ be the orthogonal projection  in $H(\varGamma)$ of $K_{\varGamma_0}^\varGamma g_{\varGamma_0}$ onto 
 $\overline{K_{\varGamma_0}^\varGamma\mathscr H(\varOmega)}$. Thus, for every $\varepsilon>0$, there 
exists $h^\varepsilon\in\mathscr H(\varOmega)$ such that 
\begin{equation}
\label{eq:est}
\|g^h_\varGamma-K_{\varGamma_0}^\varGamma \circ \gamma_{\varGamma_0}h^\varepsilon\|_{H(\varGamma_0)}<
\varepsilon\big[{1+2\big\|K_{\varGamma_0}^\varGamma g_{\varGamma_0}-g^h_\varGamma\big\|_{H(\varGamma)}}\big]^{-1}.
\end{equation}
Define $f_{\varGamma_0}^\varepsilon=\Pi_{\varGamma_0}\big(g_{\varGamma_0}-\gamma_{\varGamma_0}h^\varepsilon\big)$ where
$\Pi_{\varGamma_0}$ is the orthogonal projection onto $\mathcal H(\varGamma_0)$ in $H(\varGamma_0)$. We have:
$$\big(K_\varGamma f_{\varGamma_0}^\varepsilon,f_{\varGamma_0}^\varepsilon\big)_{H(\varGamma_0)}
=\big(\Pi_\varGamma\circ K^\varGamma_{\varGamma_0} f_{\varGamma_0}^\varepsilon,K^\varGamma_{\varGamma_0}f_{\varGamma_0}^\varepsilon\big)_{H(\varGamma)}=\big(\Pi_\varGamma\circ K^\varGamma_{\varGamma_0} f_{\varGamma_0}^\varepsilon,\Pi_\varGamma\circ K^\varGamma_{\varGamma_0}f_{\varGamma_0}^\varepsilon\big)_{H(\varGamma)}.$$
Notice that, for every $p\in H(\varGamma_0)$, $\Pi_\varGamma\circ K_{\varGamma_0}^\varGamma\circ\Pi_{\varGamma_0}p=
\Pi_\varGamma\circ K_{\varGamma_0}^\varGamma p$ because the functions $\mathscr S_{\varGamma_0}p$ and $\mathscr S_{\varGamma_0}\circ \Pi_{\varGamma_0}p$ differ only up to a constant in $\varOmega$. This implies that:
\begin{subequations}
\label{eq:concl}
\begin{equation}
\big(K_\varGamma f_{\varGamma_0}^\varepsilon,f_{\varGamma_0}^\varepsilon\big)_{H(\varGamma_0)}
= \big\|\Pi_\varGamma\circ K_{\varGamma_0}^\varGamma\big(g_{\varGamma_0}-\gamma_{\varGamma_0}h^\varepsilon\big)\big\|_{H(\varGamma)}^2.
\end{equation}
On the other hand, observing that $\Pi_\varGamma\big(K_{\varGamma_0}^\varGamma g_{\varGamma_0}-g^h_\varGamma\big) =K_{\varGamma_0}^\varGamma g_{\varGamma_0}-g^h_\varGamma$, we get:
\begin{multline*}
\left|\big\|\Pi_\varGamma\circ K_{\varGamma_0}^\varGamma\big(g_{\varGamma_0}-\gamma_{\varGamma_0}h^\varepsilon\big)\big\|^2_{H(\varGamma)}
-\big\| K_{\varGamma_0}^\varGamma g_{\varGamma_0}-g^h_\varGamma \big\|^2_{H(\varGamma)}\right|
\leqslant \\
\big\|\Pi_\varGamma \big(g^h_\varGamma-K_{\varGamma_0}^\varGamma\circ\gamma_{\varGamma_0}h^\varepsilon\big)
\big\|_{H(\varGamma)}\Big[\big\|\Pi_\varGamma \big(g^h_\varGamma-K_{\varGamma_0}^\varGamma\circ\gamma_{\varGamma_0}h^\varepsilon\big)
\big\|_{H(\varGamma)}+2\big\| K_{\varGamma_0}^\varGamma g_{\varGamma_0}-g^h_\varGamma\big\|_{H(\varGamma)}
\Big],
\end{multline*}
which, with the estimate \eqref{eq:est} gives for every $\varepsilon<1$:
\begin{equation}
\left|\big\|\Pi_\varGamma\circ K_{\varGamma_0}^\varGamma\big(g_{\varGamma_0}-\gamma_{\varGamma_0}h^\varepsilon\big)\big\|^2_{H(\varGamma)}
-\big\| K_{\varGamma_0}^\varGamma g_{\varGamma_0}-g^h_\varGamma \big\|^2_{H(\varGamma)}\right|<\varepsilon.
\end{equation}
By construction:
$$\big(K_{\varGamma_0}^\varGamma g_{\varGamma_0}-g^h_\varGamma,K_{\varGamma_0}^\varGamma \circ\gamma_{\varGamma_0}h
\big)_{H(\varGamma)}=0,$$
but also, according to \eqref{PRO:41}, for every $h\in\mathscr H(\varOmega)$:
$$\big(K_{\varGamma_0}^\varGamma g_{\varGamma_0}-g^h_\varGamma,K_{\varGamma_0}^\varGamma \circ\gamma_{\varGamma_0}h
\big)_{H(\varGamma)}=\big(K_{\varGamma}^{\varGamma_0}\big(K_{\varGamma_0}^\varGamma g_{\varGamma_0}-g^h_\varGamma\big)
,\gamma_{\varGamma_0}h\big)_{H(\varGamma_0)}.$$
This implies, with Lemma~\ref{LEM:51} that $\mathscr S_{\varGamma}\big(K_{\varGamma_0}^\varGamma g_{\varGamma_0}-g^h_\varGamma\big)$ is harmonic outside $\mathcal O$. On the other hand, inside $\mathcal O$, 
$\mathscr S_{\varGamma}\circ K_{\varGamma_0}^\varGamma g_{\varGamma_0}=
\mathscr S_{\varGamma_0}g_{\varGamma_0}$ and $\mathscr S_{\varGamma}g^h_\varGamma$ is harmonic (because $g^h_\varGamma$ 
is in the space $\overline{K_{\varGamma_0}^\varGamma\mathscr H(\varOmega)}$) and therefore $\Delta \mathscr S_{\varGamma}\big(K_{\varGamma_0}^\varGamma g_{\varGamma_0}-g_\varGamma\big)=h$. According to \eqref{eq:zero_fct}, we have:
\begin{equation}
\big\| K_{\varGamma_0}^\varGamma g_{\varGamma_0}-g^h_\varGamma \big\|^2_{H(\varGamma)}=\int_{\mathcal O}h^2\,{\rm d}m.
\end{equation}
\end{subequations}
Combining the equations \eqref{eq:concl} leads to the conclusion.
\end{proof}
Using the polarization identity, we prove:
\begin{cor}
\label{cor:mom}
For every $\varepsilon>0$ and every $k,m\in\mathbb N$, there exist $f_k^\varepsilon$ and $f_m^\varepsilon$ in the complex space
$\mathcal H(\varGamma_0)+i\mathcal H(\varGamma_0)$ 
such that:
$$\left|\big(K_\varGamma f_k^\varepsilon,f_m^\varepsilon\big)_{H(\varGamma_0)}- 
\int_{\mathcal O}z^k\overline{z^m}\,{\rm d}m(z)\right|<\varepsilon.$$
\end{cor}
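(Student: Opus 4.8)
The plan is to reduce the complex moment to quadratic quantities of the form $\int_{\mathcal O}h^2\,{\rm d}m$ that the preceding theorem already controls, through a polarization argument. First I would set $h_1=z^k$ and $h_2=\overline{z^m}$. Both are harmonic in $\varOmega$ --- $z^k$ is holomorphic and $\overline{z^m}$ anti-holomorphic --- and, being polynomials, both belong to $L^2(\varOmega)$ because $\varOmega$ is bounded. Their product is precisely the integrand, so the target quantity is $\int_{\mathcal O}h_1h_2\,{\rm d}m$.

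Next I would record that the bilinear form $(f,g)\mapsto\big(K_\varGamma f,g\big)_{H(\varGamma_0)}$ is symmetric. This is immediate from \eqref{eq:keystone}: since $\Pi_\varGamma$ is an orthogonal projection, $\big(\Pi_\varGamma K_{\varGamma_0}^\varGamma f,K_{\varGamma_0}^\varGamma g\big)_{H(\varGamma)}=\big(\Pi_\varGamma K_{\varGamma_0}^\varGamma f,\Pi_\varGamma K_{\varGamma_0}^\varGamma g\big)_{H(\varGamma)}$, which is visibly symmetric in $f$ and $g$; thus $K_\varGamma$ is self-adjoint on $\mathcal H(\varGamma_0)$. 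I would then extend this form complex-bilinearly (with no conjugation) to the complexification $\mathcal H(\varGamma_0)+i\mathcal H(\varGamma_0)$, so that for all $f_1,f_2$ in that space the polarization identity
\[
\big(K_\varGamma f_1,f_2\big)_{H(\varGamma_0)}=\tfrac12\Big[\big(K_\varGamma(f_1+f_2),f_1+f_2\big)_{H(\varGamma_0)}-\big(K_\varGamma f_1,f_1\big)_{H(\varGamma_0)}-\big(K_\varGamma f_2,f_2\big)_{H(\varGamma_0)}\Big]
\]
holds. The very same identity holds for the integrals, giving $\int_{\mathcal O}h_1h_2\,{\rm d}m=\tfrac12\big(\int_{\mathcal O}(h_1+h_2)^2\,{\rm d}m-\int_{\mathcal O}h_1^2\,{\rm d}m-\int_{\mathcal O}h_2^2\,{\rm d}m\big)$.

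The decisive point --- and the step I would treat most carefully --- is that the three diagonal terms must be controlled by a single, consistent choice of approximants. To this end I would apply the preceding theorem to $h_1$ and to $h_2$, obtaining approximating harmonic functions $h_1^\varepsilon,h_2^\varepsilon\in\mathscr H(\varOmega)$ and the corresponding approximants $f_k^\varepsilon=\Pi_{\varGamma_0}(g_{\varGamma_0}(h_1)-\gamma_{\varGamma_0}h_1^\varepsilon)$ and $f_m^\varepsilon=\Pi_{\varGamma_0}(g_{\varGamma_0}(h_2)-\gamma_{\varGamma_0}h_2^\varepsilon)$ built as in its proof (for complex-valued $h_j$ one applies the theorem to the real and imaginary parts and assembles the result in $\mathcal H(\varGamma_0)+i\mathcal H(\varGamma_0)$). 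Because $g_{\varGamma_0}(\cdot)$ and the relevant projections depend linearly on the data, the sum $f_k^\varepsilon+f_m^\varepsilon$ is exactly the approximant that the theorem associates with $h_1+h_2$ for the choice $h_1^\varepsilon+h_2^\varepsilon$, and its approximation error is bounded by the sum of the two individual errors. Consequently all three quadratic evaluations $\big(K_\varGamma\,\cdot\,,\,\cdot\,\big)_{H(\varGamma_0)}$ at $f_k^\varepsilon$, $f_m^\varepsilon$ and $f_k^\varepsilon+f_m^\varepsilon$ approximate $\int_{\mathcal O}h_1^2$, $\int_{\mathcal O}h_2^2$ and $\int_{\mathcal O}(h_1+h_2)^2$ respectively, each to within a prescribed tolerance $\delta$.

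Finally I would combine these estimates through the polarization identity. Subtracting the integral polarization from the operator polarization, the errors add up to at most $\tfrac32\delta$, so choosing $\delta<\tfrac23\varepsilon$ yields $\big|\big(K_\varGamma f_k^\varepsilon,f_m^\varepsilon\big)_{H(\varGamma_0)}-\int_{\mathcal O}z^k\overline{z^m}\,{\rm d}m\big|<\varepsilon$, which is the claim. The main obstacle is exactly the consistency issue flagged above: a naive application of the theorem to $h_1$, $h_2$ and $h_1+h_2$ separately would produce three unrelated approximants and leave the cross term $\big(K_\varGamma f_k^\varepsilon,f_m^\varepsilon\big)$ --- the very quantity of interest --- uncontrolled, since by bilinearity it reappears inside $\big(K_\varGamma(f_k^\varepsilon+f_m^\varepsilon),f_k^\varepsilon+f_m^\varepsilon\big)$. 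It is the additivity $f_k^\varepsilon+f_m^\varepsilon=f^\varepsilon(h_1+h_2)$, inherited from the linearity of the construction, that breaks this circularity and lets the polarization close.
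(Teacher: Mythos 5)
Your proof is correct and is essentially the paper's own argument: the paper's entire proof of this corollary is the single phrase ``Using the polarization identity,'' and your write-up---polarizing the symmetric bilinear form $\big(K_\varGamma\,\cdot\,,\,\cdot\,\big)_{H(\varGamma_0)}$ (symmetric by \eqref{eq:keystone}) and exploiting the linearity of the construction in the preceding theorem so that $f_k^\varepsilon+f_m^\varepsilon$ is a valid approximant for $z^k+\overline{z^m}$---is precisely the intended filled-in version of that. The consistency point you flag and resolve through additivity of the approximants is indeed the only subtle step, and you handle it correctly.
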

We have now reached our goal: turning the reconstruction problem \ref{pb2} into a shape-from-moments problem. Indeed, Theorem~\ref{theo:fact} 
explains how to compute the operator $K_\varGamma$ from the measurement operator $\varLambda_\varGamma$ and Corollary~\ref{cor:mom} 
shows that a suitable choice of inputs can be used to calculate with an arbitrary precision the complex moments of the obstacles.
%
 \section{Using a finite section of all the complex moments}
 \label{SEC:6}
For a positive integer $n$ we assume known the complex moments of $\mathcal O$:
 $$\int_{\mathcal O}\overline{z^k}z^\ell\,{\rm d}m(z)\forallt k\leqslant n,\,\ell\leqslant n.$$
Applying a Gram-Schmidt process to the family of monomials $\{1,z,z^2,\ldots\,z^n\}$ we can compute from these moments the so-called Bergman polynomials $P_0$, $P_1,\ldots,P_n$, i.e. the polynomials orthonormalized for the scalar product of $L^2(\mathcal O)$ and such that $P_k$ is of degree 
$k$ for every $k=0,\ldots,n$. Following the ideas developed in  \cite{Gustafsson:2009aa}, we introduce the function $\varTheta_n$ defined by:
\begin{equation}
\label{def_Thea}
\varTheta_n(z)=\frac{1}{\sqrt{\pi\sum_{j=0}^n |P_j(z)|^2}}\forallt z\in\mathbb C.
\end{equation}
When all the curves $\varGamma_j$ ($j=1,\ldots,n$) are analytic, this function is shown  to approximate the distance to $\varGamma$ in $\mathcal O$ while it
decays to zero at certain rates, as $n$ goes to $+\infty$ on $\varGamma$ and in $\mathbb C\setminus\overline{\mathcal O}$.
More precisely, ${\rm dist}(z,\varGamma)\leqslant \varTheta_n(z)$ for $z\in\mathcal O$ and there exists two positive constants 
$C_1$ and $C_2$ such that $C_1/n\leqslant \varTheta_n(z)\leqslant C_2/n$ for $z\in\varGamma$. 
We will see in Section~\ref{SEC:num} that plotting the level sets $\{z\in\mathbb C\,:\,\varTheta_n(z)=\lambda/n\}$ for certain values of
 $\lambda$ 
gives a pretty good 
approximation of the boundaries of the obstacles.
%
 \section{Using a finite section of the harmonic moments only}
 \label{SEC:7}
For a positive integer $n$ we assume known the harmonic moments:
$$\tau_\ell= \int_{\mathcal O} z^\ell\,{\rm d}m(z)\forallt \ell=0,\ldots,2n-1.$$
An algorithm of reconstruction of $\mathcal O$ from the $\tau_\ell $ is detailed in \cite{Gerber-Roth:2023aa}. The key idea is to 
approximate $\mathcal O$ by  so-called quadrature domains. The method consists of two stages:
\begin{enumerate}
\item Determine complex weights $c_j$ and complex nodes $z_j$ ($j=1,\ldots,n$) such that
\begin{subequations}
\begin{equation}
\label{eq:st1}
\sum_{j=0}^n c_j z_j^\ell=\tau_j\forallt \ell=0,\ldots,2n-1.
\end{equation}
Such a system of equations is called a Prony's system.
\item Determine a domain $\mathcal O_n$ satisfying the quadrature identity:
\begin{equation}
\label{quad_dom}
\int_{\mathcal O_n} z^\ell\,{\rm d}m(z)=\sum_{j=0}^n c_j z_j^\ell\forallt \ell\in \mathbb N.
\end{equation}
\end{subequations}
By construction, $\mathcal O$ satisfies the identities above for $\ell=0,\ldots,2n-1$ but we emphasize that $\mathcal O_n$ is required 
to satisfy these identities for all the integers $\ell$. Such a domain is called a quadrature domain.
\end{enumerate}
Regarding the first stage, we introduce the polynomial:
\begin{equation*}
    P_n(z) = \begin{vmatrix}
    \tau_0     & \tau_1 & \cdots & \tau_{n-1}  & \tau_n\\
    \tau_1     & \tau_2 & \cdots & \tau_n      & \tau_{n+1}\\
    \vdots     & \vdots &        & \vdots      & \vdots\\
    \tau_{n-1} & \tau_n & \cdots & \tau_{2n-2} & \tau_{2n-1}\\
    1          & z      & \cdots & z^{n-1}     & z^n
    \end{vmatrix},
\end{equation*}
which enters the statement of the following result proved in \cite{Gerber-Roth:2023aa}:
\begin{theorem}
    \label{existence_prony}
    Equations \eqref{eq:st1} admit a solution if and only if the polynomial $P_n$ admits $n$ simple roots. 
    In this case, this solution is unique and the nodes $z_{1}, \ldots, z_{n}$ are the roots of $P_n$.
\end{theorem}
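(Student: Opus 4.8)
The plan is to connect the solvability of the nonlinear system \eqref{eq:st1} to the algebraic structure of the Hankel matrix built from the moments $\tau_0,\ldots,\tau_{2n-1}$, and to exploit the fact that a sum of $n$ point masses $\sum_{j} c_j \delta_{z_j}$ produces moments whose generating behaviour is governed by a polynomial whose roots are precisely the nodes $z_j$. The central object is the polynomial $P_n(z)$ defined by the determinant in the statement: expanding along its last row shows that $P_n$ is a genuine polynomial of degree $n$ in $z$ (the leading coefficient being the Hankel determinant $\det(\tau_{k+\ell})_{0\le k,\ell\le n-1}$), and the other rows encode exactly the Hankel recurrence that any valid Prony representation must satisfy.

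First I would prove the \emph{necessity} (``only if'') direction. Suppose weights $c_j$ and distinct nodes $z_j$ solve \eqref{eq:st1}, i.e. $\tau_\ell=\sum_{j=1}^n c_j z_j^\ell$ for $\ell=0,\ldots,2n-1$. I would form the monic polynomial $Q(z)=\prod_{j=1}^n (z-z_j)=\sum_{i=0}^n q_i z^i$ (with $q_n=1$) whose roots are the nodes, and verify the key linear relation $\sum_{i=0}^n q_i\,\tau_{\ell+i}=\sum_{j=1}^n c_j z_j^\ell Q(z_j)=0$ for $\ell=0,\ldots,n-1$, since each $Q(z_j)=0$. These $n$ relations say that the coefficient vector $(q_0,\ldots,q_n)$ lies in the kernel obtained by appending the monomial row, which is exactly the statement that $P_n$ is (up to the nonzero leading Hankel factor) proportional to $Q$; hence $P_n$ has the $n$ distinct roots $z_1,\ldots,z_n$, and in particular they are simple. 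This simultaneously forces the Hankel determinant to be nonzero (a Vandermonde-type factorization $\tau_{k+\ell}=\sum_j c_j z_j^{k+\ell}$ shows $\det(\tau_{k+\ell})=\big(\sum\ \text{products of } c_j\big)\prod_{j<j'}(z_j-z_{j'})^2\neq 0$ provided all $c_j\neq 0$), so the determinant defining $P_n$ really does have degree $n$.

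Next I would handle the \emph{sufficiency} (``if'') direction and uniqueness together. Assume $P_n$ has $n$ simple roots $z_1,\ldots,z_n$. Because the leading coefficient of $P_n$ is the Hankel determinant, its nonvanishing (guaranteed once $P_n$ genuinely has degree $n$) lets me read off from the determinantal rows that the coefficient vector of $P_n$ annihilates the Hankel recurrence, i.e. $\sum_{i=0}^n q_i \tau_{\ell+i}=0$ for $\ell=0,\ldots,n-1$. With the nodes fixed as the roots of $P_n$, recovering the weights $c_j$ reduces to solving the linear Vandermonde system $\sum_{j=1}^n c_j z_j^\ell=\tau_\ell$ for $\ell=0,\ldots,n-1$; since the $z_j$ are distinct, the $n\times n$ Vandermonde matrix is invertible, so the $c_j$ exist and are \emph{unique}. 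The recurrence relation then propagates the quadrature identity from $\ell\le n-1$ to all $\ell\le 2n-1$, closing \eqref{eq:st1}. Uniqueness of the whole solution follows because any solution must, by the necessity argument, have its nodes equal to the roots of $P_n$, and the weights are then pinned down by the invertible Vandermonde system.

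I expect the main obstacle to be the bookkeeping that makes the determinantal polynomial $P_n$ and the abstract annihilating polynomial $Q$ coincide \emph{as polynomials} rather than merely sharing roots: one must track the nonvanishing of the Hankel determinant carefully, since if it degenerated the degree of $P_n$ would drop and the root-counting argument would break. The cleanest route is to treat the Hankel determinant nondegeneracy as equivalent to the existence of a genuine $n$-node representation, so that the ``$n$ simple roots'' hypothesis and the invertibility of the Hankel/Vandermonde matrices are two faces of the same condition. Since this theorem is quoted from \cite{Gerber-Roth:2023aa}, I would cite that reference for the full determinantal computation and present only the structural argument above.
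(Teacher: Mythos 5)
The first thing to note is that the paper contains no proof of this theorem: it is quoted as a result ``proved in \cite{Gerber-Roth:2023aa}'', so there is no internal argument to compare yours against. Your sketch is the standard Prony--Hankel argument --- necessity via the annihilating polynomial $Q(z)=\prod_{j}(z-z_j)$ together with a rank/factorization argument for the Hankel matrix, sufficiency by reading the determinantal identity $\sum_{i}q_i\tau_{\ell+i}=0$ ($\ell=0,\ldots,n-1$) off the rows of $P_n$, solving the invertible Vandermonde system for the weights, and propagating the identity to $\ell\leqslant 2n-1$ by the shared linear recurrence --- and this structure is sound; it is in all likelihood essentially the argument of the cited reference.

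Two points need tightening, one of which is substantive. The equivalence as literally stated is \emph{false} unless ``solution'' is taken to mean $n$ pairwise \emph{distinct} nodes with \emph{nonzero} weights, and your proof silently relies on exactly this: you assume distinct $z_j$ at the outset, and your Hankel determinant is nonzero only ``provided all $c_j\neq 0$'', a proviso you never discharge. It cannot be discharged: if $\mathcal O$ is a disk of radius $r$ centered at $a$, the mean value property gives $\tau_\ell=\pi r^2 a^\ell$, so for $n=2$ the equations \eqref{eq:st1} are solved by $c_1=\pi r^2$, $z_1=a$, $c_2=0$, $z_2$ arbitrary, while $P_2\equiv 0$ (its first two rows are proportional) and hence has no simple roots. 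So the nondegeneracy must be built into the definition of a solution, as in \cite{Gerber-Roth:2023aa}; your write-up should say so explicitly rather than leave it as a parenthesis. Secondly, two smaller repairs: the Hankel determinant factorizes as $\det\mathbb H_0^{(n)}=\big(\prod_{j}c_j\big)\prod_{j<j'}(z_{j'}-z_j)^2$ via $\mathbb H_0^{(n)}=W^{\mathsf T}CW$ with $W$ Vandermonde and $C=\mathrm{diag}(c_1,\ldots,c_n)$ --- a single product, not a ``sum of products of the $c_j$'' (for complex nodes a genuine sum of squared Vandermonde factors could not be declared nonzero anyway); and in the sufficiency direction you should observe that the weights returned by the Vandermonde system are automatically nonzero (otherwise $\det\mathbb H_0^{(n)}$ would vanish, contradicting $\deg P_n=n$), so that the object you construct is a solution in the required nondegenerate sense --- which is also what your uniqueness argument implicitly uses.
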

The explicit determination of the nodes $z_j$ and weights $c_j$ ($j=1,\ldots,n$) is carried out by means of 
the matrix pencil method  described in \cite{Golub:2000aa}. Introducing the Hankel matrices:
\begin{subequations}
\begin{equation}
\label{eq:Hank}
    \mathbb{H}_0^{(n)} = \begin{pmatrix}
        \tau_0     & \tau_1   & \dots  & \tau_{n-1} \\
        \tau_1     & \tau_2   & \dots  & \tau_{n} \\
        \vdots     & \vdots   & \ddots & \vdots \\
        \tau_{n-1} & \tau_{n} & \dots  & \tau_{2n-2}
    \end{pmatrix}
\qquad\text{and}\qquad 
       \mathbb{H}_1^{(n)} = \begin{pmatrix}
        \tau_1   & \tau_2     & \dots  & \tau_{n} \\
        \tau_2   & \tau_3     & \dots  & \tau_{n+1} \\
        \vdots   & \vdots     & \ddots & \vdots \\
        \tau_{n} & \tau_{n+1} & \dots  & \tau_{2n-1}
    \end{pmatrix},
\end{equation}
the nodes $z_j$ are the solutions of the following generalized
eigenvalue problem:
\begin{equation}
\label{eq:eig}
    \exists\, \xi\in\mathbb C^n \setminus \left\{ 0  \right\}, \quad \mathbb{H}_0^{(n)} \xi = z\, \mathbb{H}_1^{(n)} \xi.
\end{equation}
If the $z_j$ are pairwise distinct, the weights $c_j$ are obtained by solving the Vandermonde linear system:
\begin{equation}
\label{eq:weight}
    \begin{pmatrix}
        1           & 1           & \dots  & 1 \\[-1mm]
        z_1         & z_2         & \dots  & z_{n} \\
        z_1^2     & z_2^2     & \dots  & z_n^2 \\
        \vdots      & \vdots      & \ddots & \vdots \\
        z_1^{n-1} & z_2^{n-1} & \dots  & z_n^{n-1}
    \end{pmatrix}
    \begin{pmatrix}
    c_1\\
    c_2\\
    c_3\\
    \vdots\\
    c_n
    \end{pmatrix}=\begin{pmatrix}
        \tau_0 \\ 
         \tau_1\\ 
          \tau_2 \\ 
        \vdots \\
        \tau_{n-1}
    \end{pmatrix}.
\end{equation}
\end{subequations}
One can verify that $z_1,\ldots,z_n$ and $c_1,\ldots,c_n$ solve \eqref{eq:st1}
(see \cite[Section 3]{Golub:2000aa}).  Since the inverse problem we are dealing with is ill-posed, it is not surprising that it leads to a
numerical method requiring the solution of two ill-conditioned problems: a generalized eigenvalue problem
with Hankel matrices  and a Vandermonde system. 
It is worth mentioning also that numerical instabilities increase  with $n$.
\par
%
We now turn our attention to the second step of the reconstruction method.
Disks are the simplest examples of quadrature domains, and can be shown to be the only ones for which $n=1$. 
 Rather counter-intuitively, there are many quadrature domains.  Actually,
every domain whose boundary consists in non-intersecting $\mathcal C^\infty$ Jordan curves is
arbitrarily close to a quadrature domain. 

Given a  set of nodes and weights, existence of a quadrature domain satisfying \eqref{quad_dom} is asserted  
in \cite[Theorem 2.4, (vi)]{Gustafsson:1990ab}, providing that the weights are real and positive (uniqueness does not hold in general). 
The
construction of this domain is strongly related to partial balayage of measures
(see \cite{Gustafsson:1994aa}) and free boundary problems (see \cite{Gustafsson:1996aa}).
In summary, if the weights $c_j$ ($j=1,\ldots,n$) are positive in \eqref{quad_dom}, a quadrature domain $\mathcal O_n$ satisfying 
\eqref{quad_dom} can 
be  obtained by means of the following identity:
\begin{subequations}
\label{eq:indicatrice}
\begin{equation}
 \mathbf{1}_{\mathcal O_n}=-\Delta V^n,
\end{equation}
where $V^n$ is the unique function achieving:
\begin{equation}
\min_{v\in K^n}\frac{1}{2}\int_\varOmega |\nabla v|^2\,{\rm d}m-\int_\varOmega v\,{\rm d}m,
\end{equation}
with:
\begin{equation}
U^n=-\frac{1}{2\pi}\sum_{j=1}^n c_j\ln|\cdot-z_j|\qquad\text{and}\qquad
K^n = \left\{ v \in H^1(\varOmega) \, : \, \gamma_{\varGamma_0}^dv = \gamma_{\varGamma_0}^dU^{n}  \text{ and }  
        v \leqslant U^{n} \text{ in } \varOmega \right\}.
\end{equation}
\end{subequations}
The function that associates to any set of nodes and weights the corresponding quadrature domain (by means of the steps \eqref{eq:indicatrice})
is called the partial balayage operator.

\section{Algorithm and numerical tests}
\label{SEC:num}
In this section we describe an algorithm derived from the results of Sections~\ref{SEC:6} and \ref{SEC:7} and we provide some numerical tests to illustrate the efficiency of the reconstruction methods. 
\par
\subsubsection*{Generation of the measurements}
Fix a positive integer $m\geqslant 3$, denote $m'=2m-1$ and for $k=1,\ldots,m'$   define the functions:
$$F_k(z)=\begin{cases} z^k&\text{if }k=1,\ldots,m\\
\overline{z}z^{k-m}/(4(k-m))&\text{if }k=m+1,\ldots,m'
\end{cases}\forallt z\in\mathbb C.$$
Notice that $\Delta \overline{z}z^j/(4j)=z^{j-1}$ for every $j\geqslant 1$ and hence $\Delta^2 F_k=0$ for every $k=1,\ldots,m'$.
 Our first objective is to generate measurements corresponding to the boundary data 
$f_k=\widetilde \gamma_{\varGamma_0} F_k$ where $\widetilde \gamma_{\varGamma_0}=
\Pi_{\varGamma_0}\circ  \gamma_{\varGamma_0}$.

Computations are based on a BEM involving   biharmonic single-layer potentials. For every $k=1,\ldots,m'$, we compute the complex densities $q_j^k$ ($j=0,\ldots,n$) that solve the system 
of integral equations:
\begin{subequations}
\begin{alignat}{3}
\gamma_{\varGamma_0}\left[\mathscr S_{\varGamma_0}q^k_0 +\mathscr S_{\varGamma_1}q^k_1 +\ldots+\mathscr S_{\varGamma_N}q^k_N \right]&=\gamma_{\varGamma_0} F_k&\quad&\text{ on }\varGamma_0\\
\gamma_{\varGamma_j}\left[\mathscr S_{\varGamma_0}q^k_0 +\mathscr S_{\varGamma_1}q^k_1 +\ldots+\mathscr S_{\varGamma_N}q^k_N \right]&=0&&\text{ on }\varGamma_j
\quad(j=1,\ldots,N).
\end{alignat}
\end{subequations}
The function $\psi_k=\sum_{j=0}^N\mathscr S_{\varGamma_j} q_j^k$ is not the solution we are looking for as it satisfies neither  
the conditions
$\gamma_{\varGamma_0}\psi=f_k$ nor the boundary condition \eqref{sys_courant_1_eq3} (such that \eqref{sys_courant_1_eq6} holds).
So, we compute for every $j,k=0,\ldots,N$, the densities $p_j^k$ such that:
$$
\gamma_{\varGamma_j}\left[\mathscr S_{\varGamma_0}p^k_0 +\mathscr S_{\varGamma_1}p^k_1 +\ldots+\mathscr S_{\varGamma_N}p^k_N \right]=(\delta_{jk},0)\quad\text{ on }\varGamma_j
\quad(j=0,1,\ldots,N).
$$
It follows that $\xi_k=\sum_{j=0}^N\mathscr S_{\varGamma_j}p_j^k$ is the function introduced in 
Section~\ref{SEC:2} and that solves System \eqref{defxi}.
Then, we determine the constants $\alpha_{j,k}$ ($j=0,\ldots,N$, $k=1,\ldots,m'$) by inverting the linear system:
$$\big[\big\langle \widehat{\mathbf 1}_{\varGamma_j},q_j^k\big\rangle\big]_{0\leqslant j\leqslant N\atop 1\leqslant k\leqslant m'}
=\big[\big\langle \widehat{\mathbf 1}_{\varGamma_j},p_j^k\big\rangle\big]_{0\leqslant j\leqslant N\atop 0\leqslant k\leqslant N}
\big[\alpha_{j,k}\big]_{0\leqslant j\leqslant N\atop 1\leqslant k\leqslant m'},$$
and we define $\tilde q_j^k=q_j^k-\sum_{i=0}^N\alpha_{i,k}p^i_j$. This time,
the function $\widetilde\psi_k=\sum_{j=0}^N\mathscr S_{\varGamma_j}\tilde q_j^k$ satisfies all the conditions mentioned above.
Recall that we assume we have the operator $\varLambda_\varGamma$ (defined in \eqref{fct_Lambda}) at our disposal. Since 
the definition of the operator $\varLambda_0$ depends only on the boundary $\varGamma_0$, we have access, as already mentioned, to the operator $R_\varGamma=S_{\varGamma_0}\circ \big(\varLambda_\varGamma-\varLambda_0\big)$ (see Theorem~\ref{theo:fact}) and then also to the operator $V_\varGamma={\rm Id}+R_\varGamma$. We easily verify that:
$$V_\varGamma f_k=\tilde q_0^k\forallt k=1,\ldots,m',$$
and this will be the measurements we shall use for the reconstruction.
\subsubsection*{Computing the scalar product on $\varGamma$}
The objective of this subsection is to explain how to compute the scalar products:
$$\big( \widetilde \gamma_{\varGamma}\overline{F_j}, \widetilde \gamma_{\varGamma}F_k\big)_{H(\varGamma)}\forallt 
j,k=1,\ldots,m'.$$
According to Theorem~\ref{theo:fact},   $K_\varGamma={\rm Id}-V_\varGamma^{-1}$ and it follows that:
\begin{equation}
\label{pldeuj}
\big(K_\varGamma \overline{f_j},f_k\big)_{H(\varGamma_0)}=\big( \overline{f_j},f_k\big)_{H(\varGamma_0)}-\big(V_\varGamma^{-1} \overline{f_j},f_k\big)_{H(\varGamma_0)}\forallt j,k=1,\ldots,m'.
\end{equation}
Notice that, by definition 
$$\big( \overline{f_j},f_k\big)_{H(\varGamma_0)}=\big(\widetilde\gamma_{\varGamma_0} \overline{F_j},
\widetilde\gamma_{\varGamma_0}F_k\big)_{H(\varGamma_0)},$$
and according to Theorem~\ref{Theo:52}, for every $j,k=1,\ldots,m'$ we have:
$$
\big(K_\varGamma  \overline{f_j},f_k\big)_{H(\varGamma_0)}=\Big(  \Pi_\varGamma\circ K_{\varGamma_0}^\varGamma  \overline{f_j},
K_{\varGamma_0}^\varGamma f_k\Big)_{H(\varGamma)}=
\big( \widetilde \gamma_{\varGamma} \overline{F_j}, \widetilde \gamma_{\varGamma}F_k\big)_{H(\varGamma)}.$$
We define the positive-definite symmetric matrices: 
$$Q^{\varGamma_0}_m=\big[( \overline{f_j},f_k)_{H(\varGamma_0)}\big]_{1\leqslant j\leqslant m'\atop 1\leqslant k\leqslant m'}\qquad\text{and}\qquad
Q^{\varGamma}_m=\big[(K_\varGamma  \overline{f_j},f_k)_{H(\varGamma)}\big]_{1\leqslant j\leqslant m'\atop 1\leqslant k\leqslant m'},$$
and also:
$$V_m=\big[(V_\varGamma  \overline{f_j},f_k)_{H(\varGamma_0)}\big]_{1\leqslant j\leqslant m'\atop 1\leqslant k\leqslant m'}
\qquad\text{and}\qquad W_m=\big[(V^{-1}_\varGamma  \overline{f_j},f_k)_{H(\varGamma_0)}\big]_{1\leqslant j\leqslant m'\atop 1\leqslant k\leqslant m'}.$$
The matrix $W_m$ is approximated by the matrix $Q^{\varGamma_0}_mV_m^{-1}Q^{\varGamma_0}_m$ so that \eqref{pldeuj} 
can be rewritten in matrix form:
$$
Q_m^\varGamma=Q_m^{\varGamma_0}-Q^{\varGamma_0}_mV_m^{-1}Q^{\varGamma_0}_m.
$$
\subsubsection*{Computing the complex moments of the obstacles}
According to Lemma~\ref{LEM:51}, if $p$ is in $\mathcal H(\varGamma)$ and satisfies $(p,K_{\varGamma_0}^\varGamma\circ\gamma_{\varGamma_0}h)_{H(\varGamma)}=0$ for every $h\in\mathscr H(\varOmega)$, then 
$\Delta\mathscr S^+_{\varGamma}\widehat p=0$ and in this case:
$$\|\mathscr S_{\varGamma}\widehat p\|_{H(\varGamma)}^2=\|\Delta \mathscr S_\varGamma\widehat p\|_{L^2(\mathcal O)}^2.$$
To put these principles into practice, let us denote  $G_k=F_{m+k}$ and recall that $\Delta G_k=z^{k-1}$ for $k=1,\ldots,m-1$. 
We decompose the matrix $Q_m^\varGamma$ into 4 sub-matrices:
$$Q_m^\varGamma=\begin{bmatrix} X_m&Y_m\\Y_m^\ast&Z_m\end{bmatrix},$$
where, by construction: 
$$X_m=\big[(\widetilde\gamma_{\varGamma}\overline{z^j},\widetilde\gamma_{\varGamma}{z^k})_{H(\varGamma)}\big]_{1\leqslant j\leqslant m\atop1\leqslant k\leqslant m},\quad 
Y_m=\big[(\widetilde\gamma_{\varGamma}\overline{z^j},\widetilde\gamma_{\varGamma}{G_k})_{H(\varGamma)}\big]_{1\leqslant j\leqslant m\atop1\leqslant k\leqslant m-1}\quad
\text{and}\quad
Z_m=\big[(\widetilde\gamma_{\varGamma}\overline{G_j},
\widetilde\gamma_{\varGamma}{G_k})_{H(\varGamma)}\big]_{1\leqslant j\leqslant m-1\atop1\leqslant k\leqslant m-1}.$$
It follows that the entries of the matrix $\mathcal M^\varGamma_m=Z_m-Y^\ast_m X_m^{-1}Y_m$ are 
$$\big(\Pi_m\circ\gamma_\varGamma \overline{G_j},
\Pi_m\circ\gamma_\varGamma G_k\big)_{H(\varGamma)}\forallt j,k=1,\ldots,m-1,$$
where $\Pi_m$ is the orthogonal projection in $H(\varGamma)$ onto the orthogonal of the 
subspace spanned by $\{1,z,z^2,\ldots,z^{m}\}$. As a conclusion, 
for $m$ large enough, the entries of the matrix $\mathcal M^\varGamma_m$ are such that:
$$\big(\Pi_m\circ\gamma_\varGamma \overline{G_j},
\Pi_m\circ\gamma_\varGamma G_k\big)_{H(\varGamma)}\simeq
 \int_{\mathcal O}\overline{z^{j-1}}z^{k-1}\,{\rm d}m(z)\forallt j,k=1,\ldots,m-1.$$
\subsubsection*{Reconstruction with a finite section of all the complex moments}
Let $L_m$ be the upper triangular matrix entering the Cholevsky factorization of $\mathcal M^\varGamma_m$ i.e. $\mathcal 
M^\varGamma_m=L_m^\ast L_m$.
Then, it is easy to verify that the columns of $L_m$ are the coefficients of the Bergman polynomials. It is then possible to 
evaluate the function $\varTheta_{m-2}$ (defined in \eqref{def_Thea}) at any point of $\varOmega$ and draw the set corresponding 
to $\varTheta_{m-2}= \lambda/(m-2)$ for some values of $\lambda$.
\subsubsection*{Reconstruction with a finite section of the harmonic moments only}
With this approach, we use only the first row of the matrix $\mathcal M^\varGamma_m$. We fix an integer $n\leqslant (m-1)/2$ and we apply the 
method described in Section~\ref{SEC:7}. As explained there, to solve the Prony's system \eqref{eq:st1} we construct the Hankel matrices 
\eqref{eq:Hank} and solve the generalized eigenvalue problem \eqref{eq:eig} which provides the values of the nodes $z_j$. 
The weights $c_j$ are obtained by solving the linear system \eqref{eq:weight}. We replace $c_j$ by $\real(c_j)$ when $\imag(c_j)$ is 
non-zero and we solve the convex minimization problem \eqref{eq:indicatrice} (partial balayage process) 
to obtain (hopefully) approximations of the obstacles.
\subsubsection*{Numerical tests}
For all the tests, the domain $\varOmega$ is a disk centered at the origin and of radius 1. 
\medskip
\par
\noindent\underline{{\it Example 1}}.
We  consider a single cross-shaped 
obstacle whose boundary is parameterized as:
$$\begin{pmatrix}x_1(\theta)\\x_2(\theta)\end{pmatrix}
=\left[0.25(1+0.4\cos(4\theta))\begin{pmatrix}
\cos(\theta) \\
\sin(\theta)
\end{pmatrix}  + 0.2
\begin{pmatrix}1\\1
\end{pmatrix}\right],\qquad(\theta\in[0,2\pi[).$$
Note that this is a non-trivial example (neither small, nor too close to the boundary $\varGamma_0$, nor a ball, nor even convex).
The reconstruction is performed with $m=13$ (corresponding to $2m-1=25$ measurements).  
It means that we have at our disposal the moments:
$$\int_{\mathcal O}\bar z^kz^n\,{\rm d}m(z)\forallt k,n=0,\ldots,11.$$
Numerical instabilities make it difficult to increase $m$ much further in this case (for larger $m$ the matrix $\mathcal M^\varGamma_m$ 
exhibits spurious negative eigenvalues). First we apply the method described in Section~\ref{SEC:6}.
Several level lines corresponding to $\varTheta_{11}=\lambda/(m-2)$ for $\lambda=0,2,0,4,0.6,0.8$ and $1$ are plotted on Fig.~\ref{fig_recont1}. The position of the obstacle is accurately found and the level lines outline clearly its convex hull. 
However, the details of the branches are not captured by the reconstruction. This would probably require $m$ to be larger.
\begin{figure}[h]
\centerline{
\begin{tabular}{cc}
\includegraphics[width=0.5\textwidth]{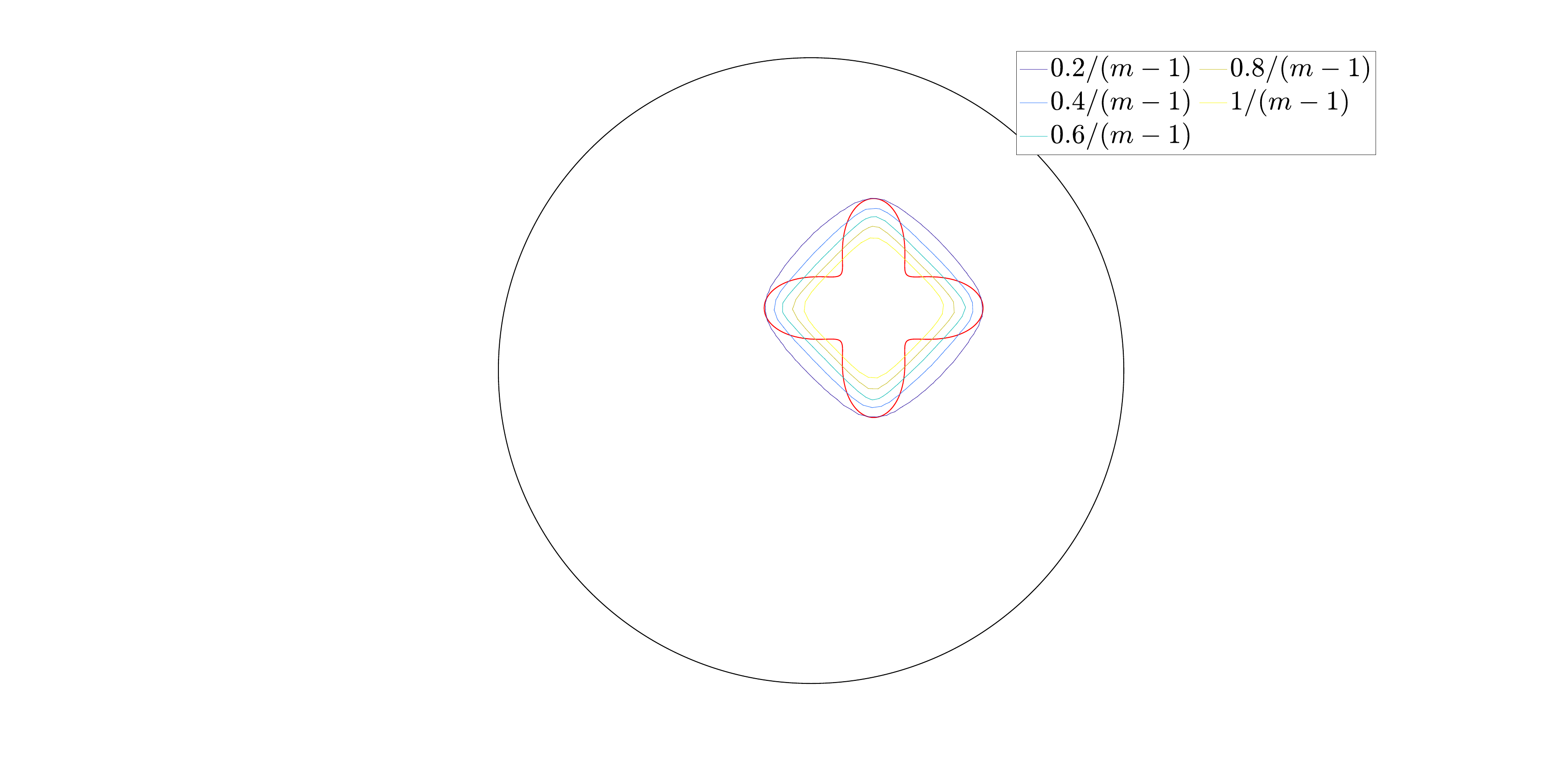}&\includegraphics[width=0.25\textwidth]{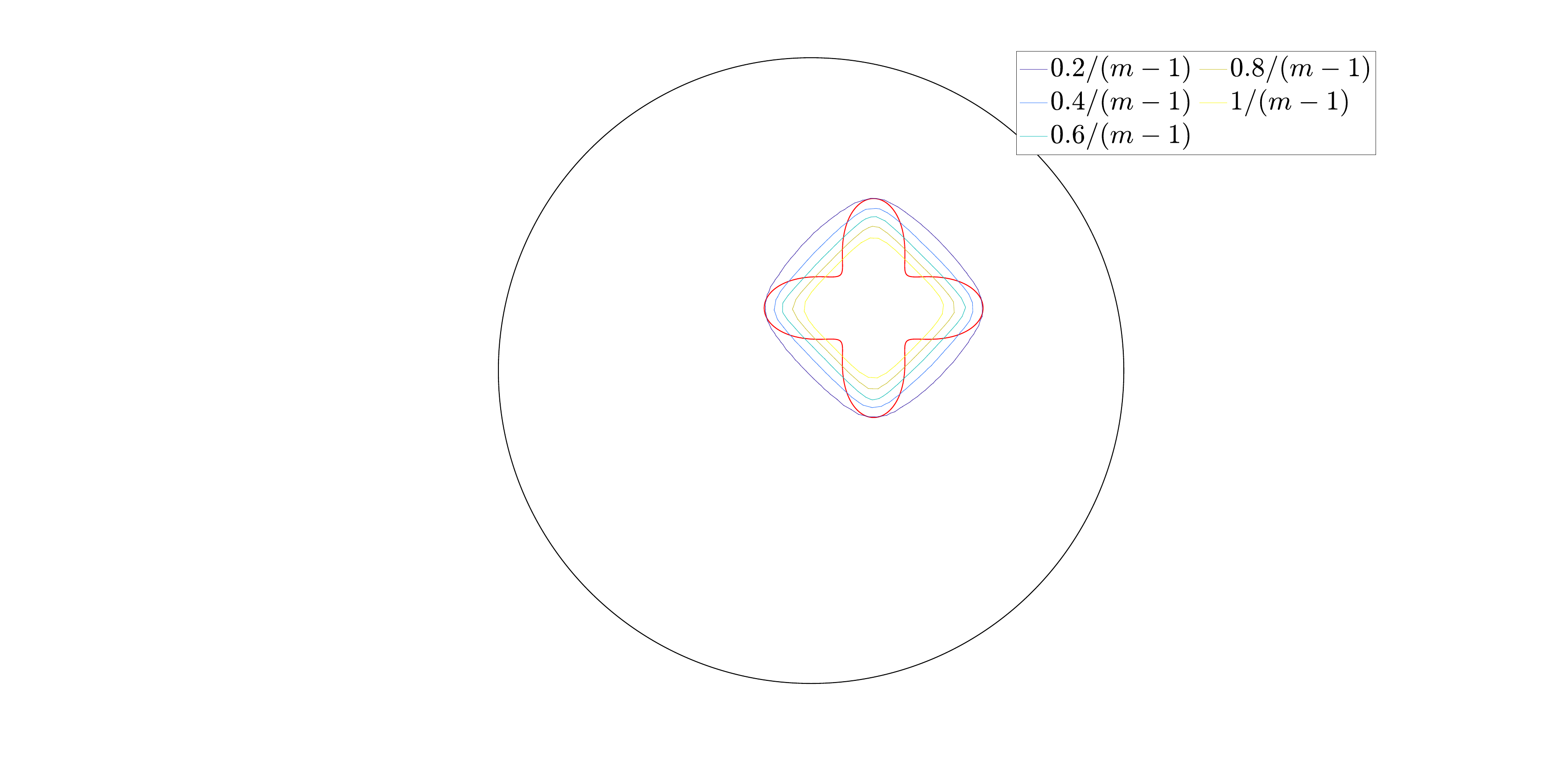}
\end{tabular}}
\caption{\label{fig_recont1}Reconstruction of a single cross-shaped obstacle using the method described in Section~\ref{SEC:6} (borrowed from \cite{Gustafsson:2009aa}) with 25 measurements (i.e. $m=13$). The image on the right is a close-up of the obstacle and its reconstruction.}
\end{figure}
\par
We apply now the method explained in Section~\ref{SEC:7} and for which only the harmonic moments:
$$\int_{\mathcal O} z^k\,{\rm d}m(z)\forallt k=0,\ldots,10,$$
are involved. We solve the related Prony's system \eqref{eq:st1} for $n=1,2,\ldots,5$ and for each case, we  represent in Fig.~\ref{fig_frtp2} 
the disks centered at the nodes 
$z_j$ and of radii $\sqrt{\real(c_j)/\pi}$. Indeed, when these disk are pairwise disjoint, the mean value property for harmonic functions 
ensures that their union is a quadrature domain that satisfies equality \eqref{quad_dom}. For $n=1,2,3$, only one weight is non zero and hence only one 
disk is non-degenerated (obviously no partial balayage step is needed). For $n=4$, we obtain four disjoint disks, so no partial balayage 
step is necessary either in this case. On the other hand, the disks overlap for $n=5$, so we solve the convex minimization 
problem \eqref{eq:indicatrice}. The resulting quadrature domain is represented on Fig.~\ref{fig_frtp3}. The reconstruction is much better than on 
Fig.~\ref{fig_recont1}, as the outline of the cross can be clearly seen. Such reconstruction accuracy is remarkable for this type of problem.
\begin{figure}[h]
\begin{tabular}{ccc}
\includegraphics[width=0.3\textwidth]{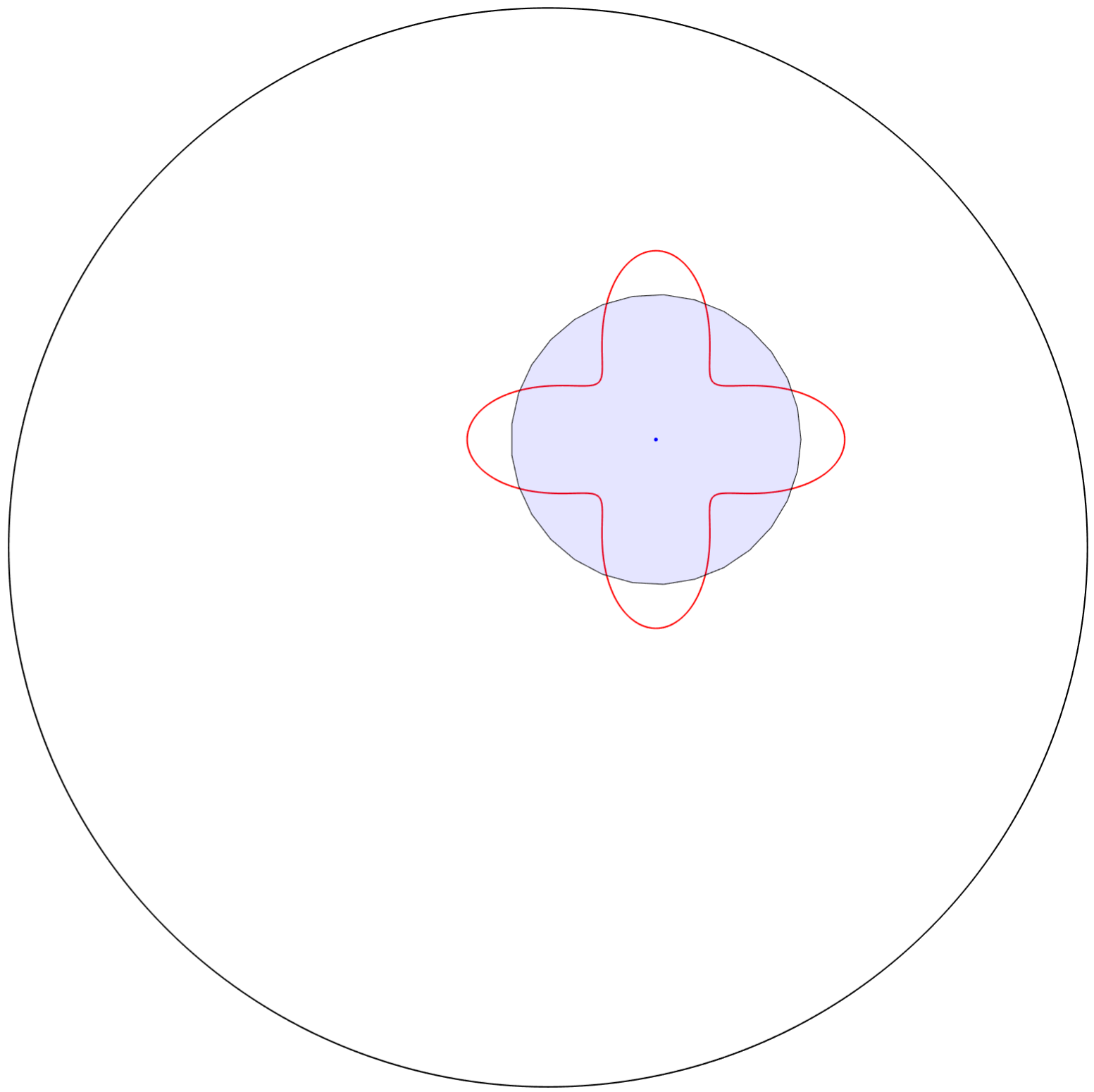}&\includegraphics[width=0.3\textwidth]{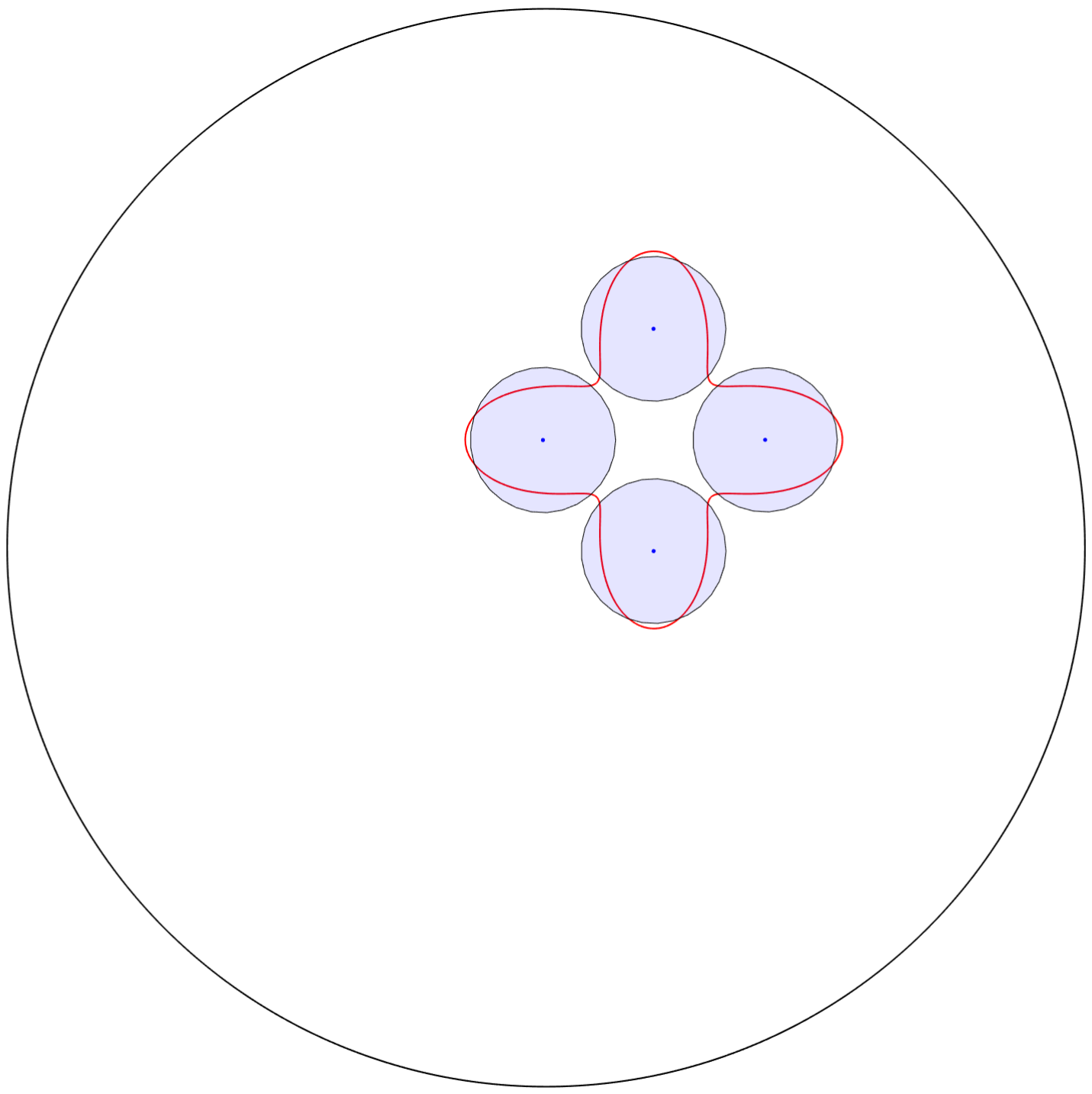}&
\includegraphics[width=0.3\textwidth]{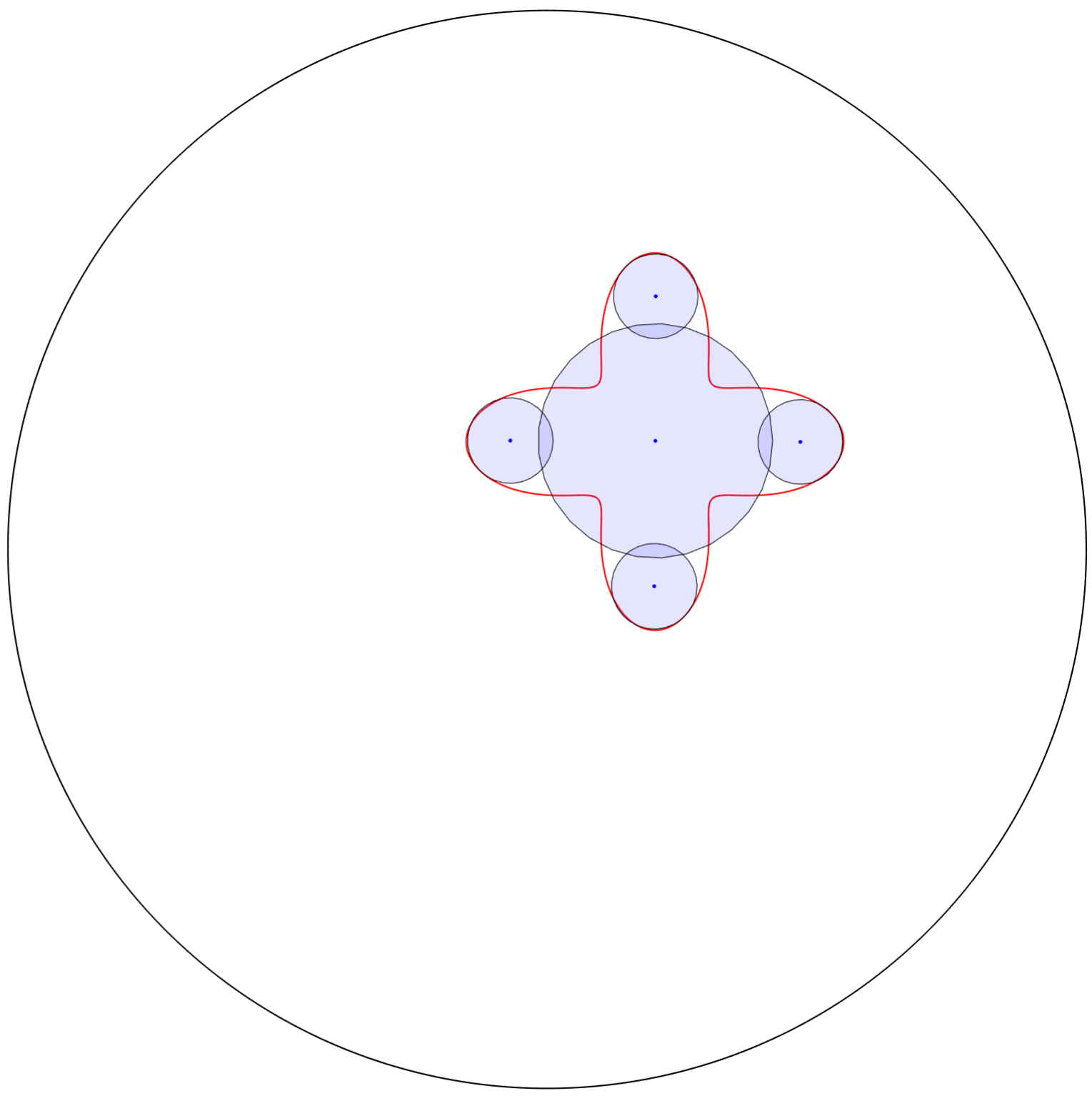}
\end{tabular}
\caption{\label{fig_frtp2}Reconstruction of a single obstacle with disks obtained by solving the Prony's system corresponding to the harmonic moments for $n=1,2,3$ (first picture), $n=4$ (second picture) and $n=5$ (third picture).}
\end{figure}
\begin{figure}[h]
\centerline{\begin{tabular}{ccc}
\includegraphics[width=0.3\textwidth]{cross_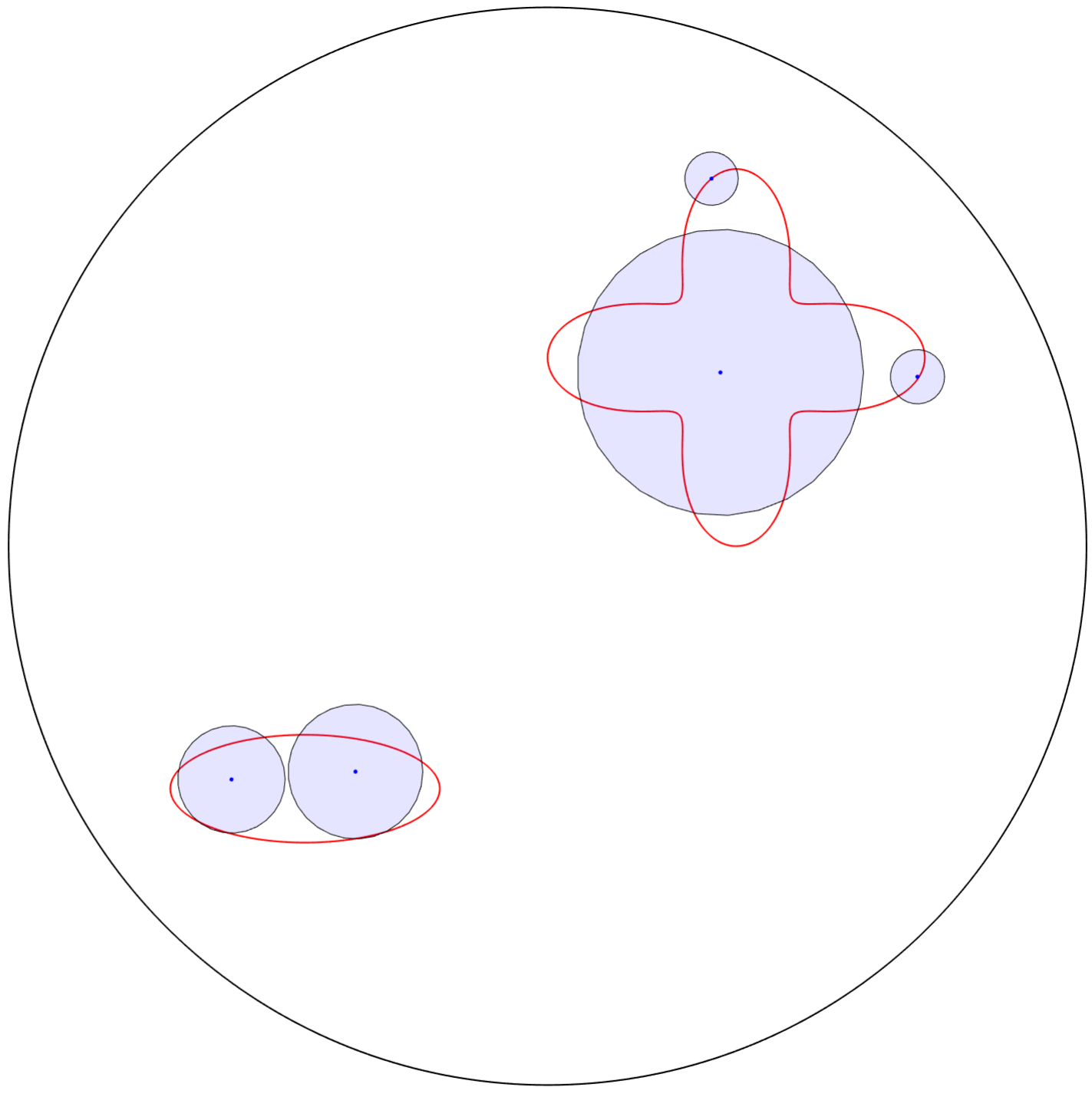}&
\includegraphics[width=0.3\textwidth]{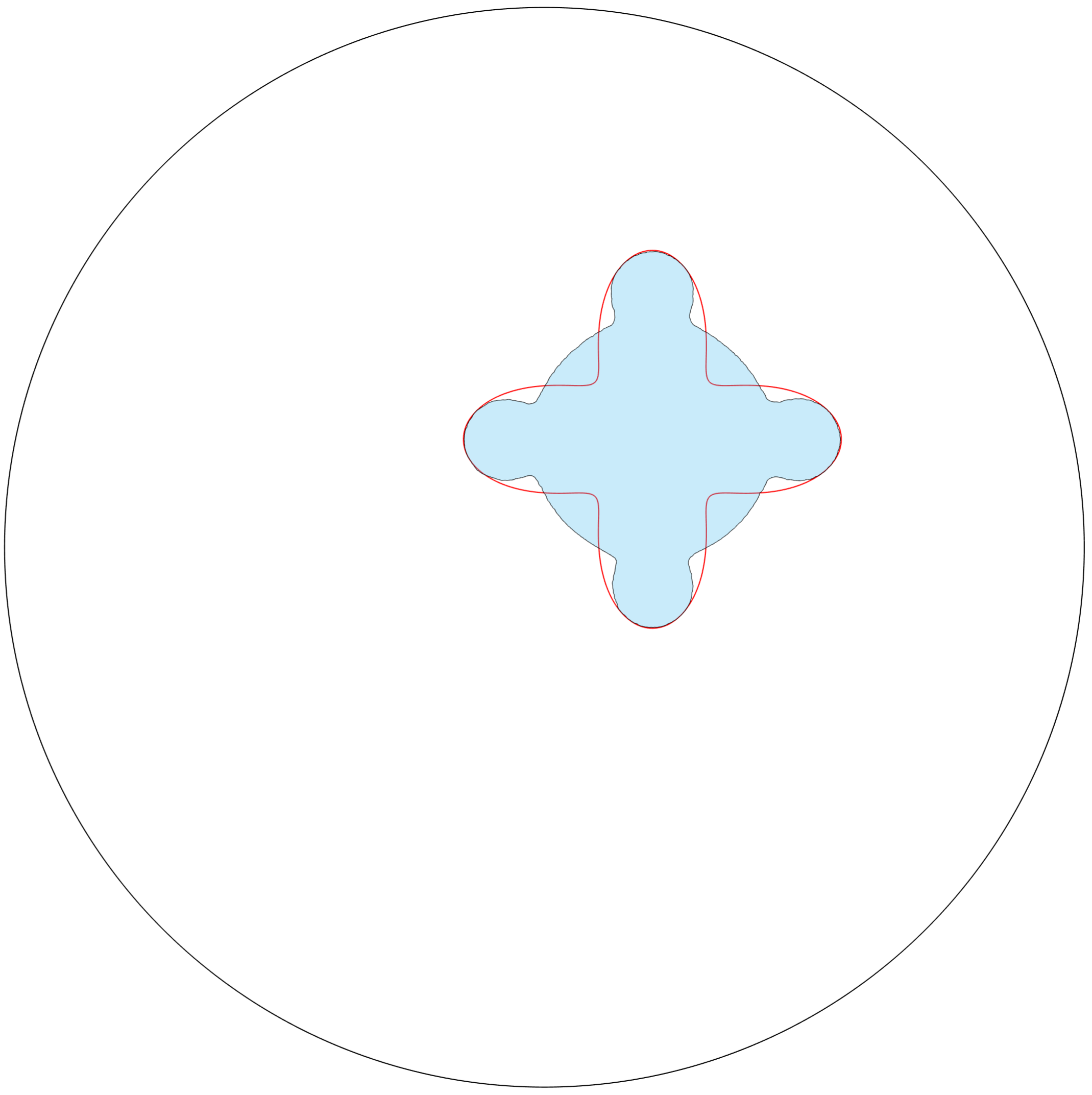}&
\includegraphics[width=0.31\textwidth]{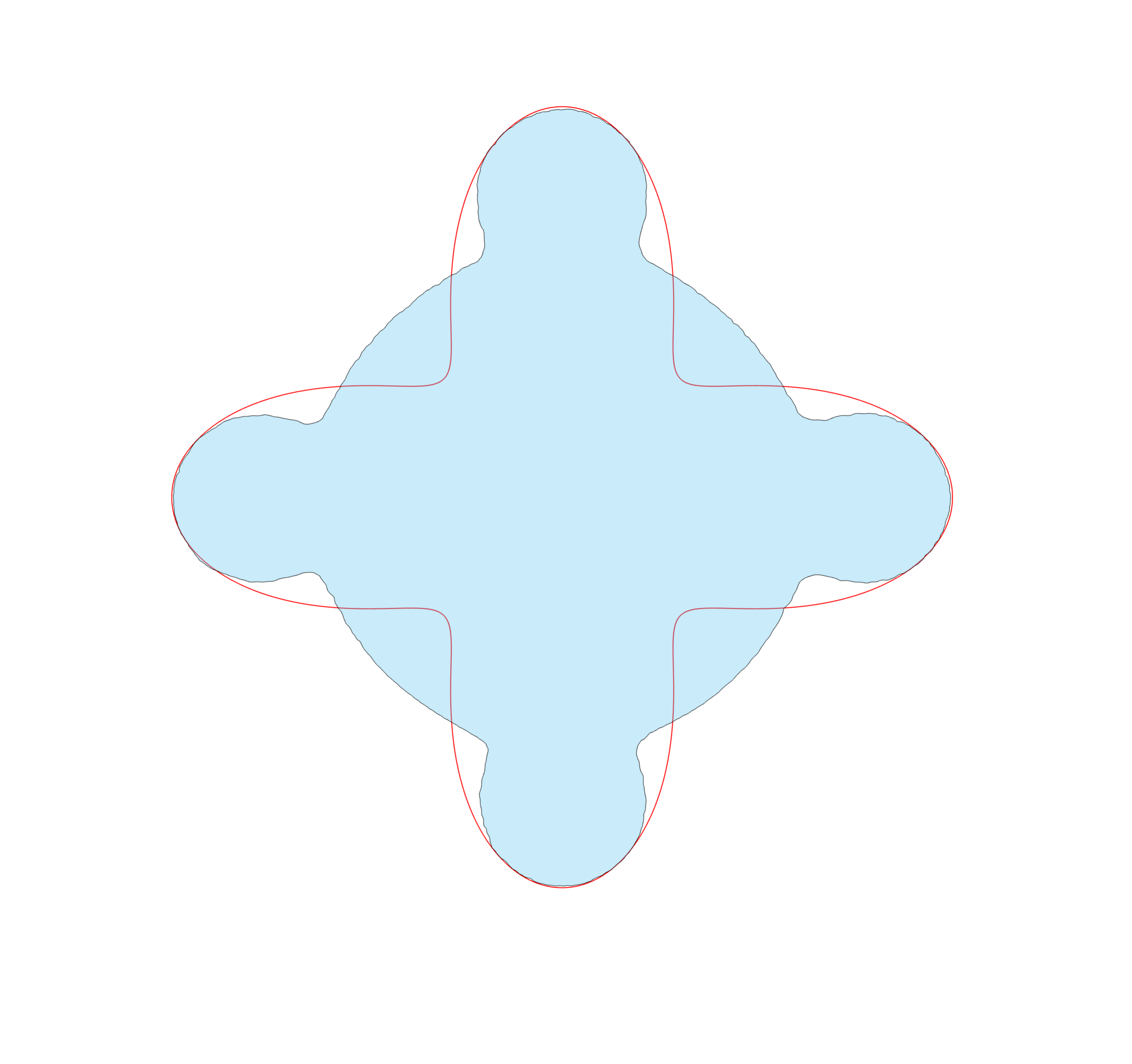}
\end{tabular}}
\caption{\label{fig_frtp3}Reconstruction of the cross-shaped obstacle after applying the partial balayage operator to the case $n=5$.}
\end{figure}
\medskip
\par
\noindent\underline{{\it Example 2}}.
We consider now two obstacles. 
The cross-shaped obstacle is retained but slightly translated and is now set as follows:
$$\begin{pmatrix}x_1(\theta)\\x_2(\theta)\end{pmatrix}
=\left[0.25(1+0.4\cos(4\theta))\begin{pmatrix}
\cos(\theta) \\
\sin(\theta)
\end{pmatrix}  + 0.35
\begin{pmatrix}1\\1
\end{pmatrix}\right],\qquad(\theta\in[0,2\pi[).$$
We add an ellipse-shaped obstacle parameterized as:
$$\begin{pmatrix}x_1(\theta)\\x_2(\theta)\end{pmatrix}=\left[\begin{pmatrix}
0.25\cos(\theta)\\
 0.1\sin(\theta)
 \end{pmatrix} - 0.45\begin{pmatrix}1\\1
\end{pmatrix}\right],\qquad(\theta\in[0,2\pi[).$$
In this case, we can increase the parameter $m$  to $m=19$ (corresponding to 37 measurements). Some level lines of the function $\varTheta_{17}$  (corresponding to values of order $1/17$) are displayed in Fig.~\ref{fig_recont2}. Obstacle positions and sizes are recovered satisfactorily. Reconstruction quality is better for the ellipse than for the crosse (convexity seems to  play a role for this method).
\begin{figure}[h]
\centerline{\includegraphics[width=0.5\textwidth]{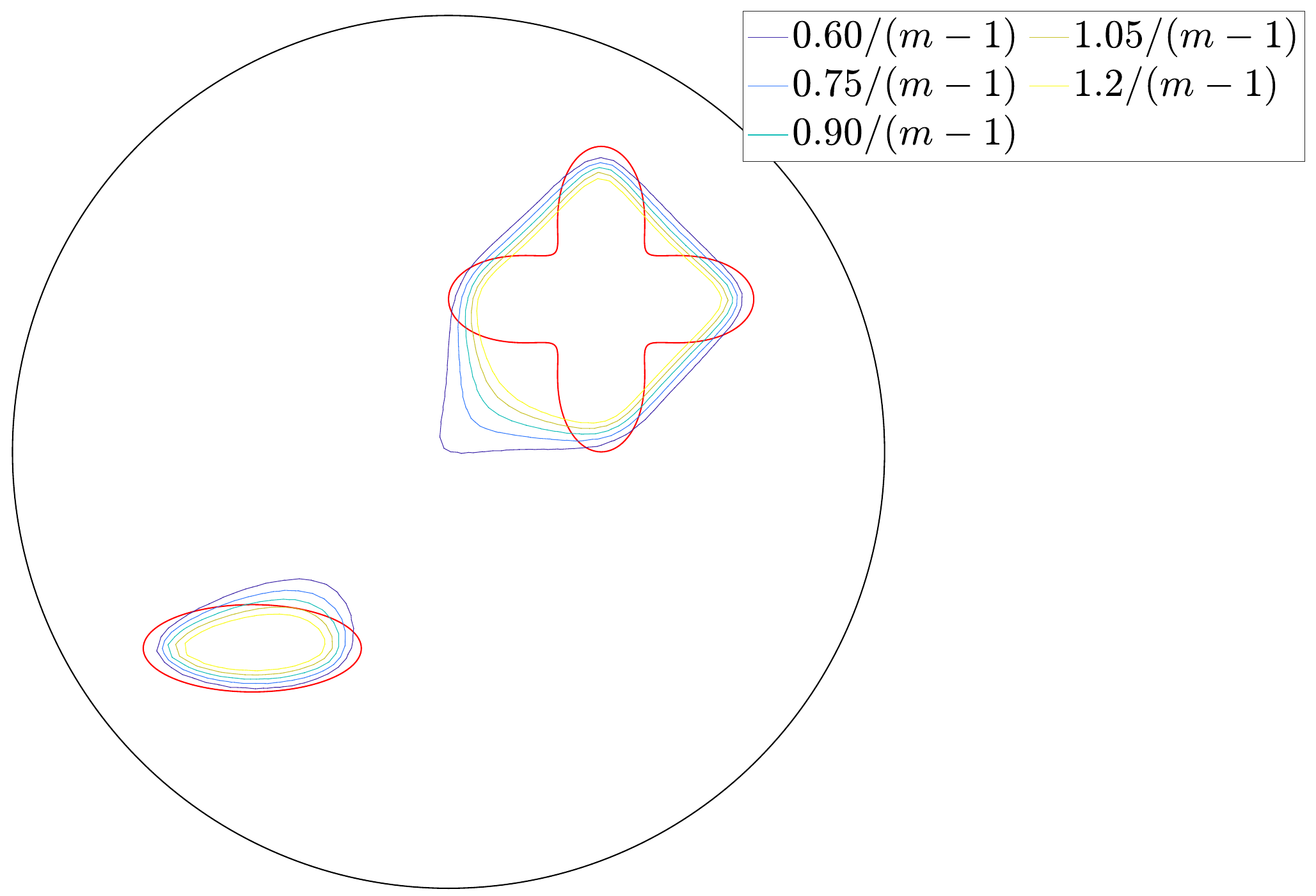}}
\caption{\label{fig_recont2}Reconstruction of two obstacles  using the method described in Section~\ref{SEC:6}. Some level 
lines of the function $\varTheta_{17}$ are represented.}
\end{figure}
\par
Using the first 18 harmonic moments only, we  apply now the method of Section~\ref{SEC:7} and plot the corresponding disks (as 
explained for the first example) for $n=2,5$ and $8$ in Fig~\ref{fig8}. For $n=2$ and $n=5$, no partial balayage step is needed as 
the disks do not overlap. For $n=8$, we apply the partial balayage operator by solving the convex minimization problem \eqref{eq:indicatrice}, 
which provides the quadrature domains represented in Fig.~\ref{fig_frtp4} (the two small spurious disks are neglected). 
Two branches of the cross can be identified and the  shape of the ellipse is correctly recovered. Once again, this is a very satisfactory reconstruction for this  demanding example.
\begin{figure}[h]
\begin{tabular}{ccc}
\includegraphics[width=0.3\textwidth]{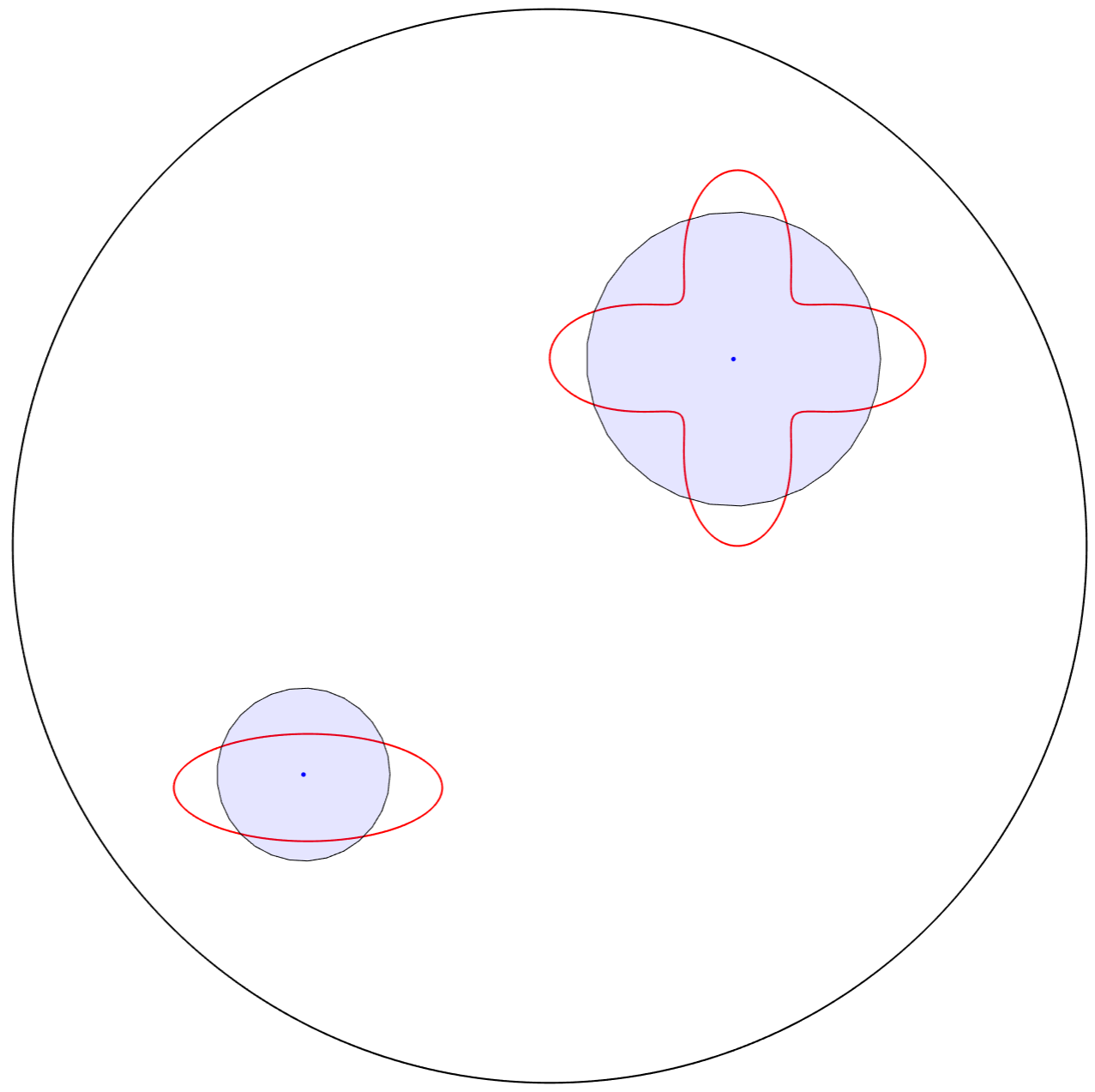}&\includegraphics[width=0.3\textwidth]{prony_5.pdf}&
\includegraphics[width=0.3\textwidth]{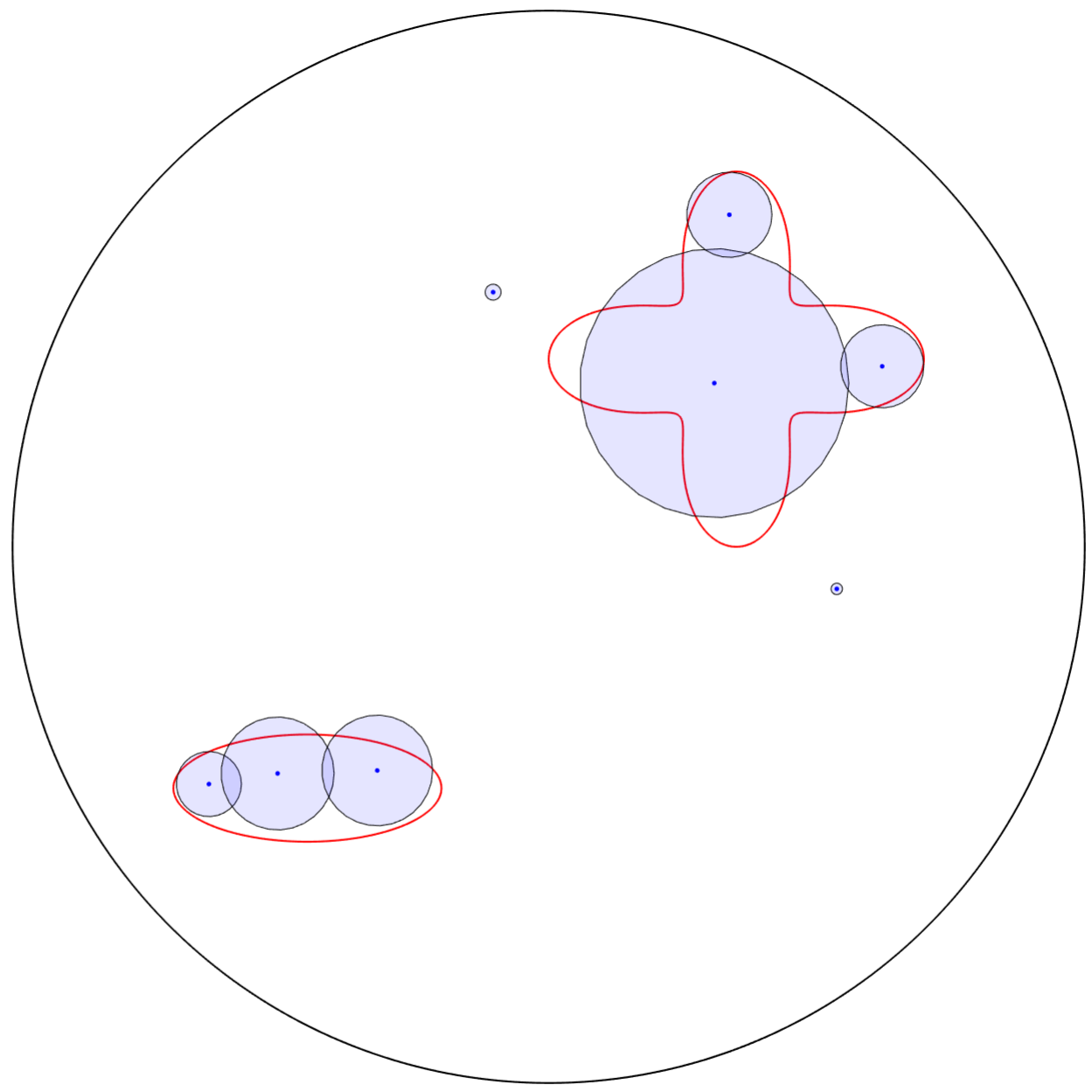}
\end{tabular}
\caption{\label{fig8}Reconstruction with 2, 5 and 8 disks. Two small spurious disks are observed in the case $n=8$. They are neglected 
for the partial balayage step.}
\end{figure}
\begin{figure}[h]
\begin{tabular}{ccc}
\includegraphics[width=0.3\textwidth]{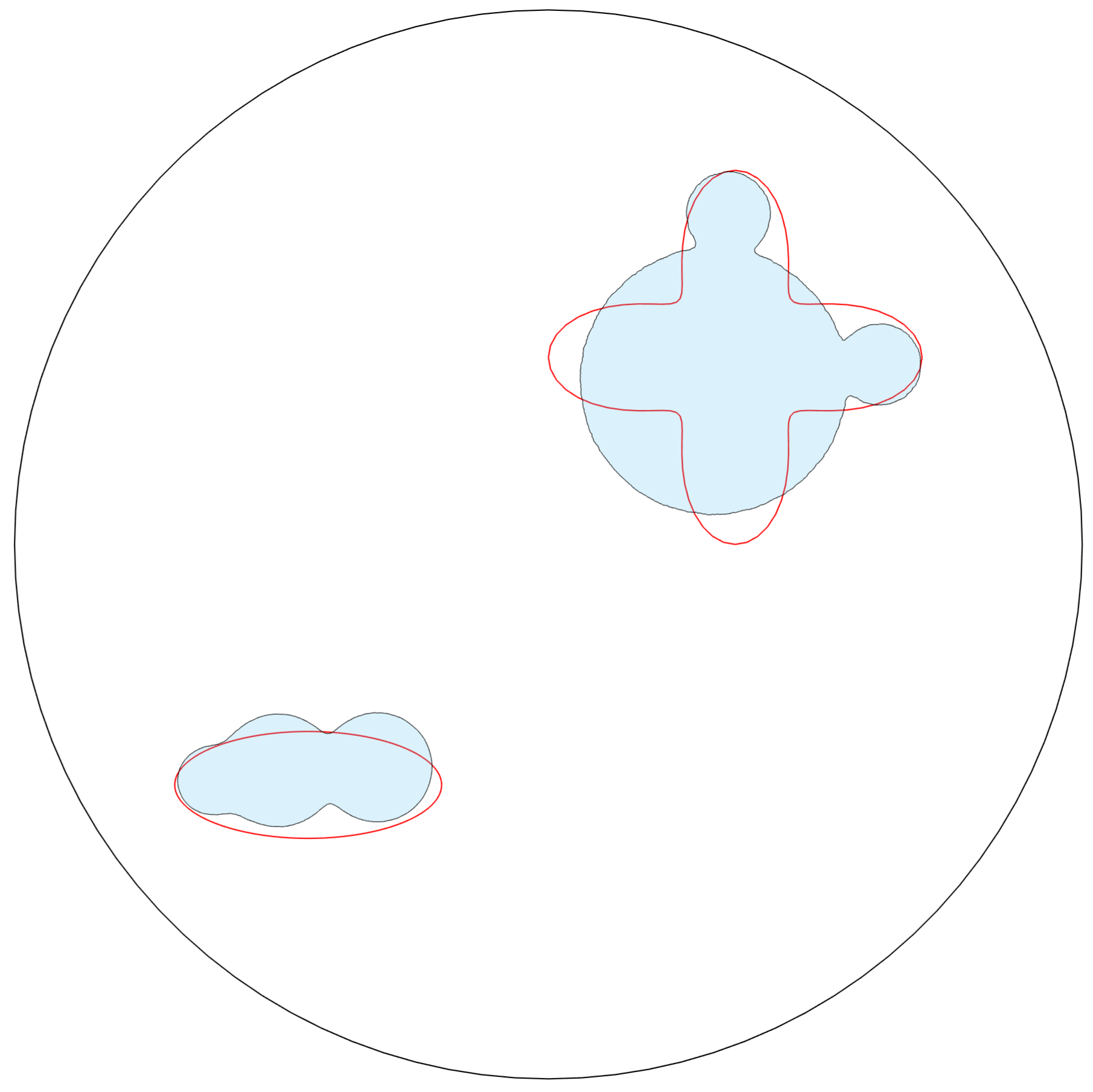}&\includegraphics[width=0.3\textwidth]{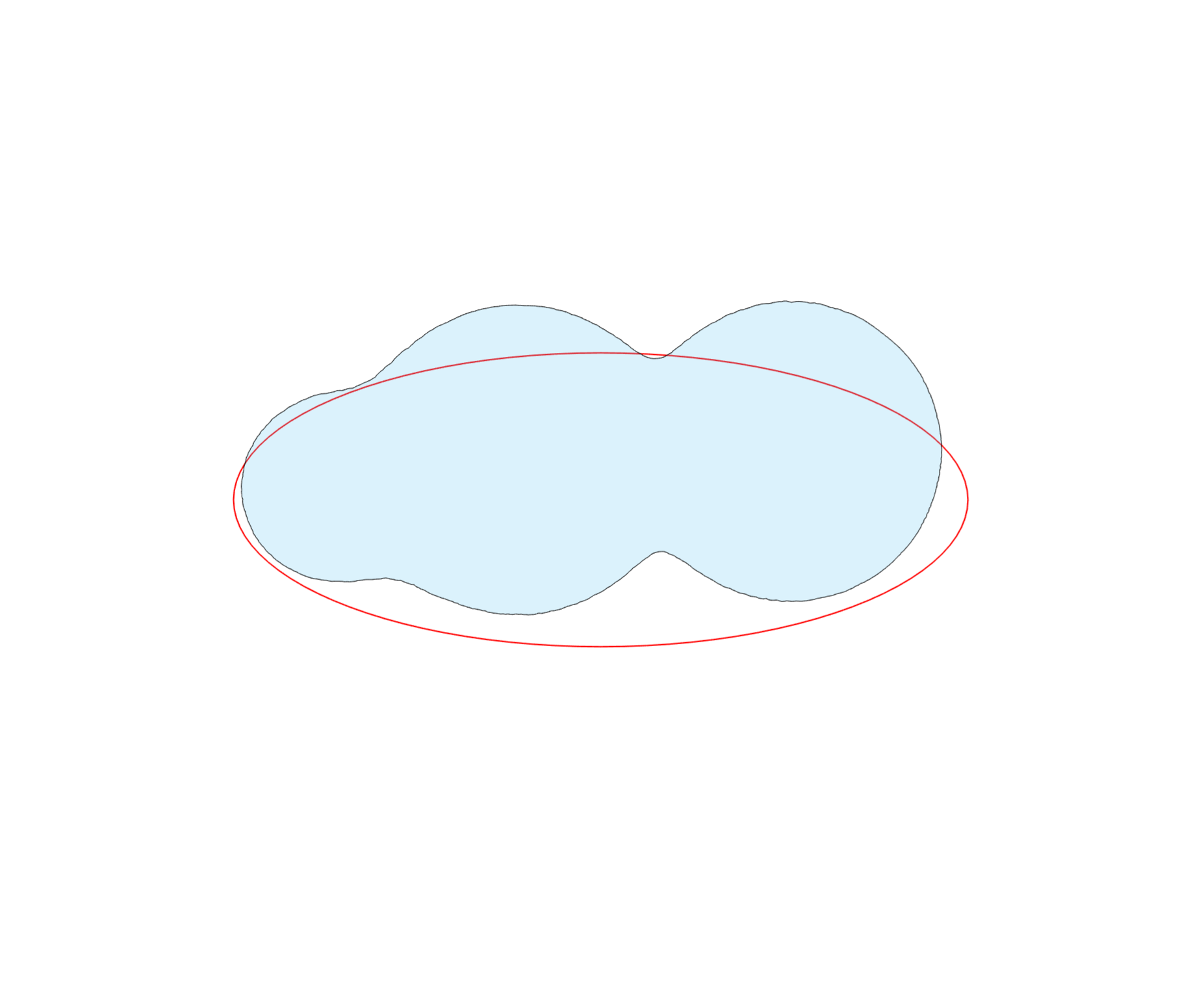}&
\includegraphics[width=0.3\textwidth]{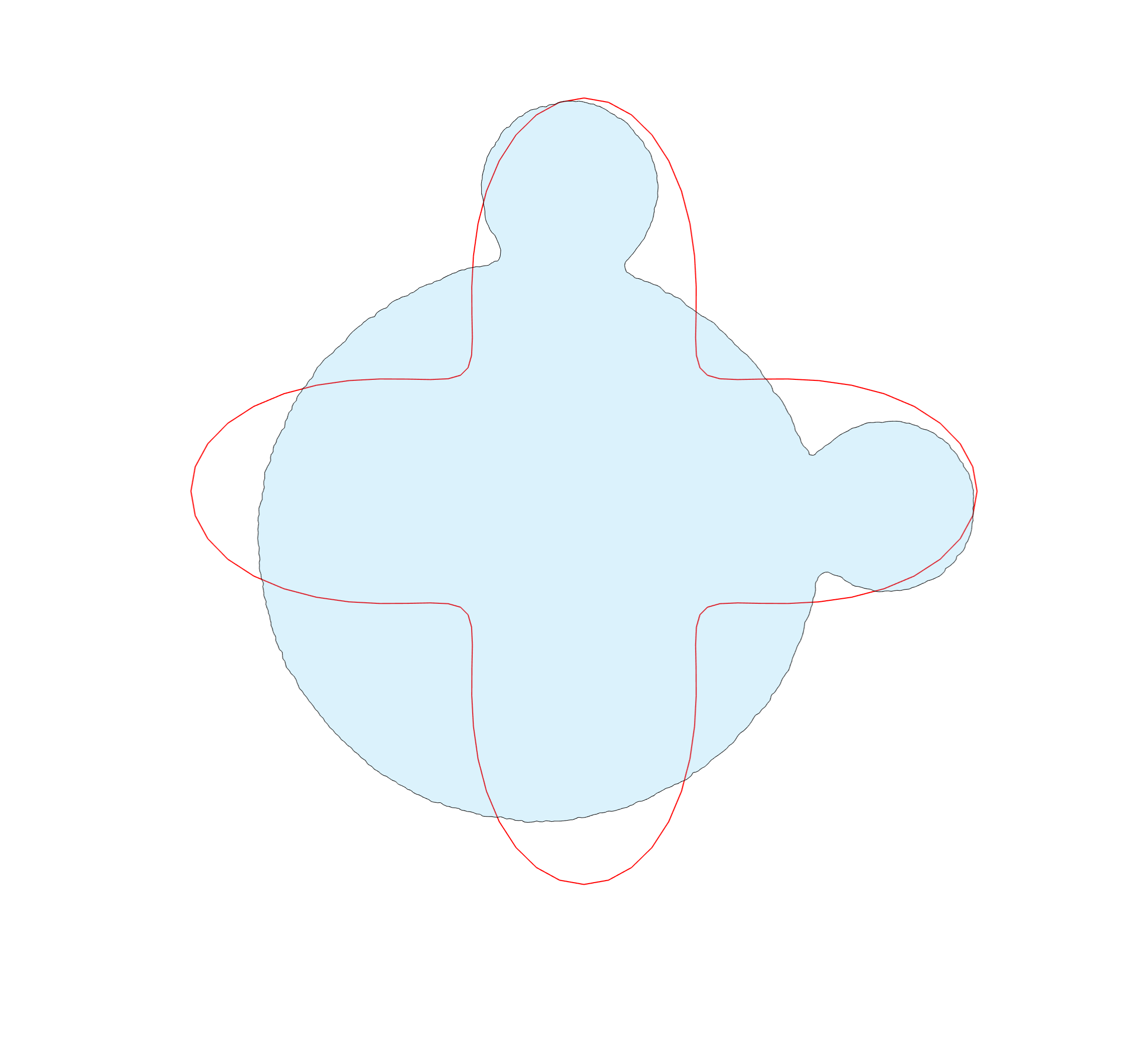}
\end{tabular}
\caption{\label{fig_frtp4}Quadrature domains obtained by applying the partial balayage operator to the case $n=8$ of Fig.~\ref{fig8}.}
\end{figure}
\medskip
\par
\noindent\underline{{\it Example 3}}.
The last case we consider is borrowed from \cite{Caubet:2016ab}. The obstacles are two squares centered at
$(-0.6, 0.3)$ and $(0.6, 0.3)$ with a distance between the center and the vertices equal to $0.2$. The interest of studying this case is twofold:  the obstacle boundaries are not smooth (unlike examples 1 and 2), and results can be compared with those provided 
in \cite{Caubet:2016ab} where the obstacles are a priori assumed to be star-shaped. The parameter $m$ is set to $m=18$ (35 measurements). The results obtained with the method of 
Section~\ref{SEC:6} 
are displayed in Fig.~\ref{fig_recont3} and those obtained with the method of Section~\ref{SEC:7} in Fig.~\ref{fig9} (no partial balayage 
step is applied). Both methods deliver highly satisfactory reconstructions.  %
\begin{figure}[h]
\centerline{
\begin{tabular}{cc}
\includegraphics[width=0.5\textwidth]{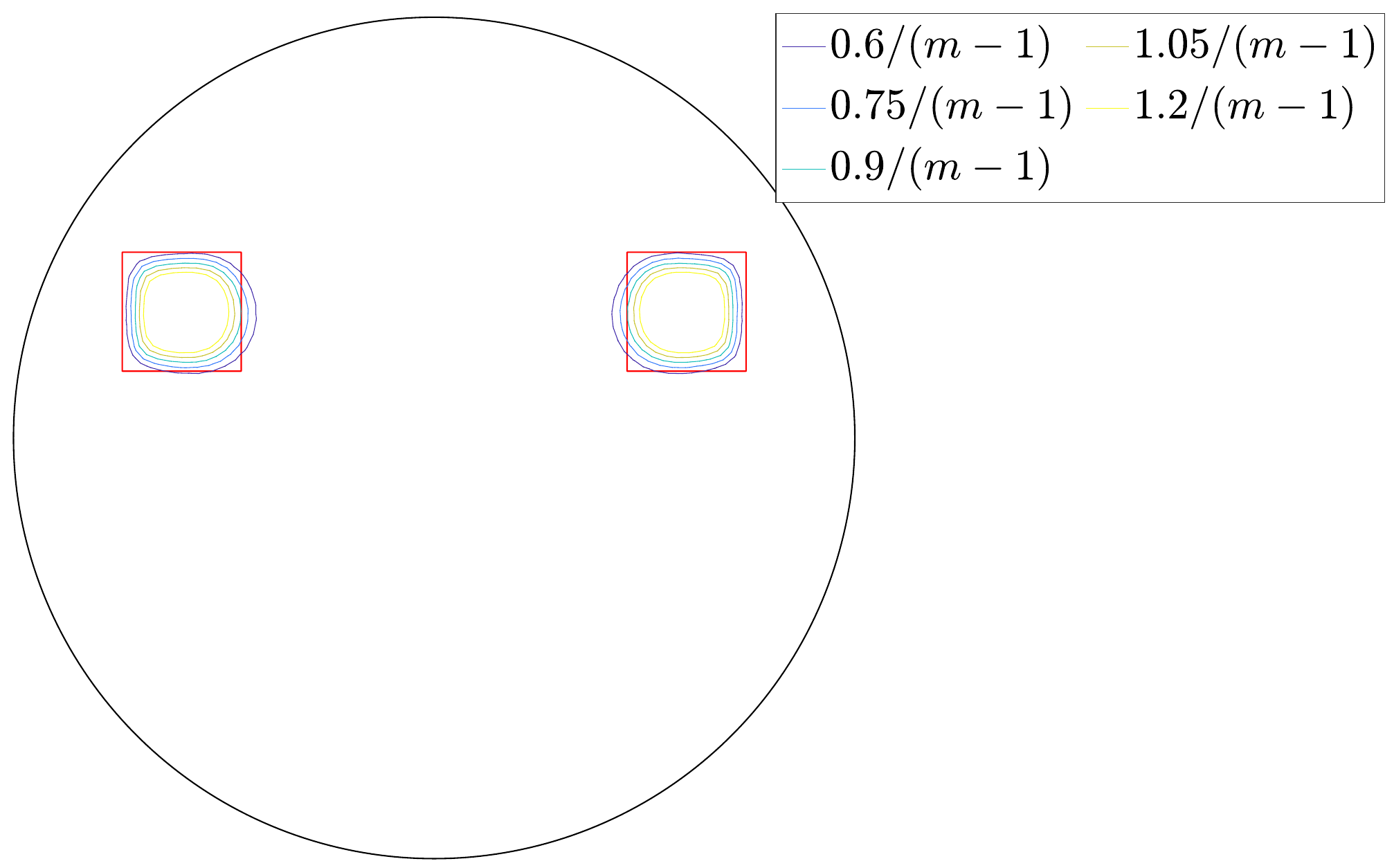}&
\includegraphics[width=0.3\textwidth]{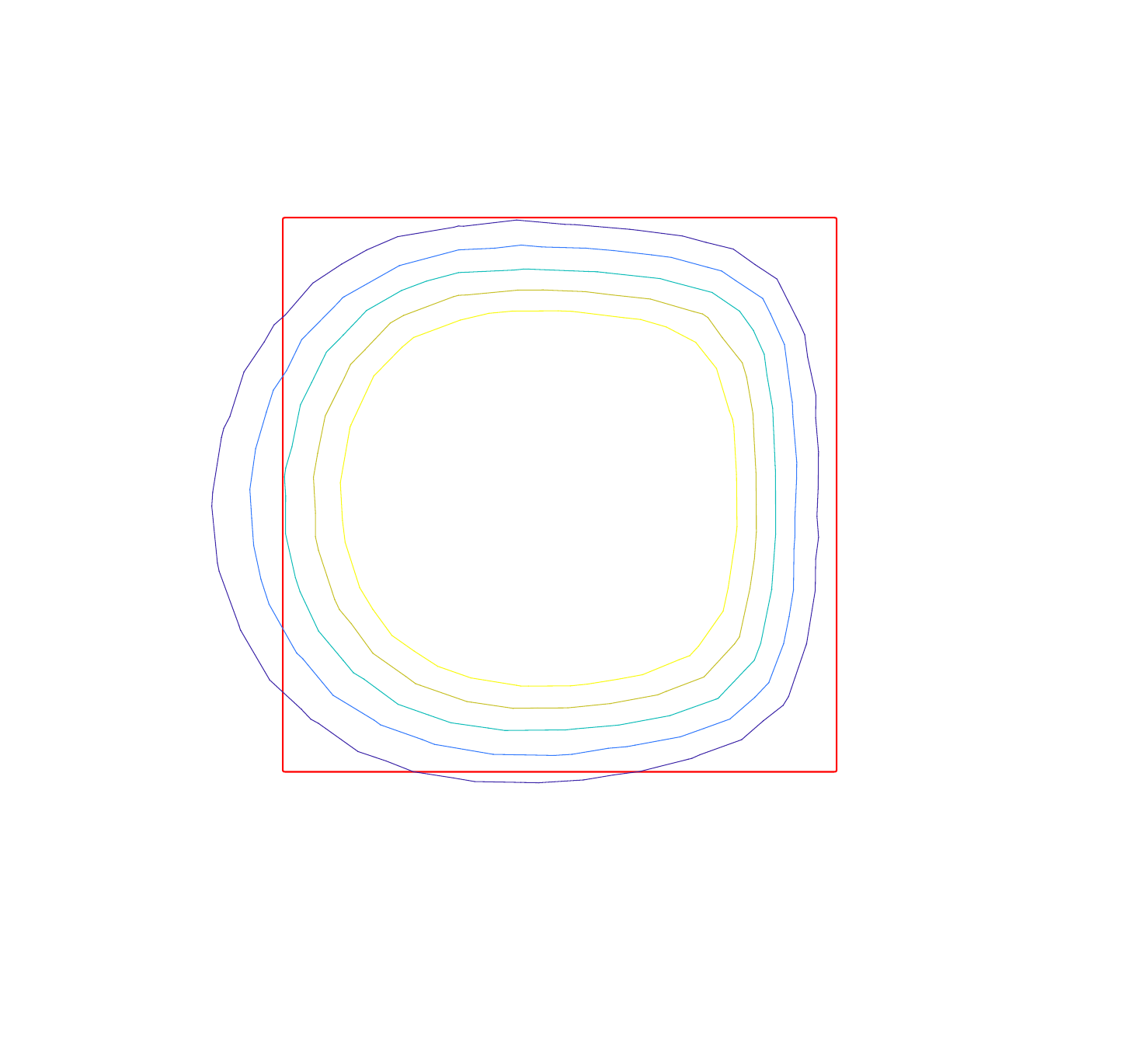}
\end{tabular}
}
\caption{\label{fig_recont3}Reconstruction of two squares  using the method described in Section~\ref{SEC:6}. Some level 
lines of the function $\varTheta_{16}$ are represented. The picture on the right is a close-up of the right square and its reconstruction.}
\end{figure}

\begin{figure}
\centerline{
\begin{tabular}{cc}
\includegraphics[width=0.3\textwidth]{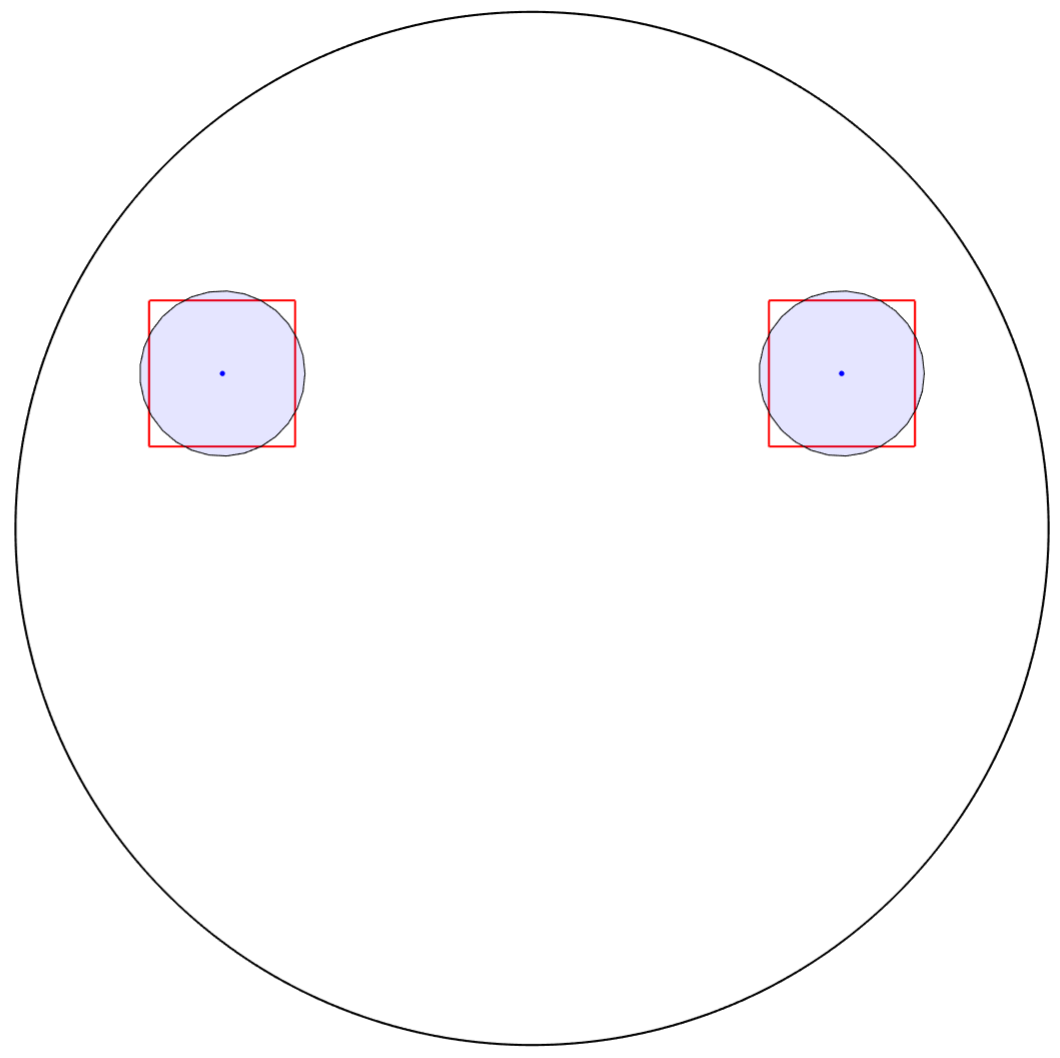}&\includegraphics[width=0.3\textwidth]{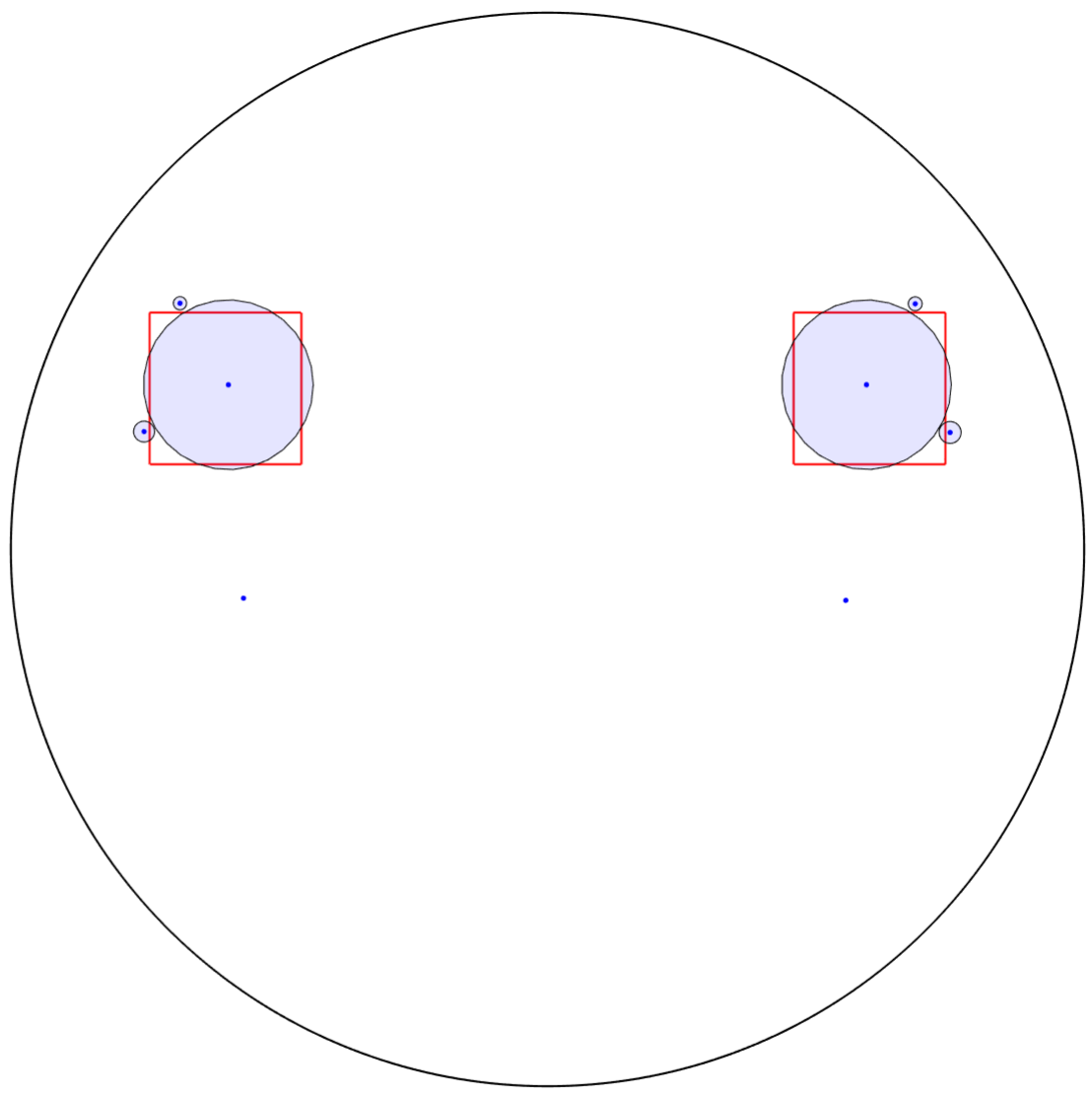}
\end{tabular}}
\caption{\label{fig9}Reconstruction of two squares with 2 and 8 disks. Small spurious disks are observed in the case $n=8$.}
\end{figure}
\section{Conclusion}
By transforming the classical inverse problem of reconstructing immersed obstacles using boundary measurements 
into a shape-from-moment problem, we have been able to develop algorithms leading to remarkably accurate reconstructions in this context. 
The methods we propose are not iterative and require no a priori information about the obstacles. A number of legitimate points remain to be addressed in a future paper:
\begin{enumerate}
\item Stability issues. The use of noisy data has not been rigorously tested in this work, but  some simple principles can nevertheless be stated.  The reconstruction methods are based on a certain number of measurements. {\it The higher the number, the more accurate the reconstruction, but also the more sensitive to noise}. For example, the reconstruction of Fig.~\ref{fig8} with only two disks can be achieved with 
9 measurements, and the reconstruction is quite stable. However, this needs to be precisely quantified. 
\item Incomplete data. One may want to take measurements only along part of the outer boundary $\varGamma_0$ and Theorem~\ref{theo:ident} states that this is theoretically sufficient to identify the obstacles. This additional difficulty has not been taken into account in this work and deserves to be studied further.
\end{enumerate}
\begin{appendix}
\section{The Dirichlet-to-Neumann operator}
\label{SEC:DtN}
Let $\varOmega$ be a $\mathcal C^{0,1}$ bounded domain and denote by $\varGamma$ its boundary. 
For every $p\in H^{1/2}(\varGamma)$, let $u_p$ be the unique function achieving:
$$\min\big\{\|\nabla \theta\|_{L^2(\varOmega)}\,:\, \theta\in H^1(\varOmega),\,\gamma_\varGamma^d\theta=p\big\}.$$
The Dirichlet-to-Neumann operator is the operator $D_\varOmega:H^{1/2}(\varGamma)\longrightarrow H^{-1/2}(\varGamma)$, $D_\varOmega p=\gamma_\varGamma^n u_p$.
Since, for every $p,q\in H^{1/2}(\varGamma)$:
$$\langle D_\varOmega p,q\rangle=-\big(\nabla u_p,\nabla u_q\big)_{L^2(\varOmega)},$$
the operator $D_\varOmega$ is self-adjoint.
\par
If $\varOmega$ is $\mathcal C^{1,1}$, by elliptic regularity, $D_\varOmega$ maps continuously  $H^{3/2}(\varOmega)$ 
into $H^{1/2}(\varOmega)$.  We deduce:
\begin{prop}
\label{prop:B1}
The operator $D_\varOmega$ extends by density as a self-adjoint operator from $H^{-1/2}(\varGamma)$ to $H^{-3/2}(\varGamma)$, i.e. for 
every $p\in H^{-1/2}(\varGamma)$ and every $q\in H^{3/2}(\varGamma)$ (with obvious notations):
$$\langle D_\varOmega p,q\rangle_{-\frac32,\frac32}=\langle p,D_\varOmega q\rangle_{-\frac12,\frac12}.$$
\end{prop}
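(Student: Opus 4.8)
The plan is to bootstrap the elementary symmetry of $D_\varOmega$ on regular data up to the dual pair $H^{-1/2}(\varGamma)\times H^{3/2}(\varGamma)$ by a duality-and-density argument. First I would record the base case: for $p,q\in H^{1/2}(\varGamma)$ the identity $\langle D_\varOmega p,q\rangle=-(\nabla u_p,\nabla u_q)_{L^2(\varOmega)}$ exhibits $\langle D_\varOmega p,q\rangle$ as a symmetric bilinear form of $(p,q)$, so $\langle D_\varOmega p,q\rangle=\langle D_\varOmega q,p\rangle$; since the duality pairing itself is symmetric (it extends the $L^2(\varGamma)$ inner product) this reads $\langle D_\varOmega p,q\rangle=\langle p,D_\varOmega q\rangle$. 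In particular the identity already holds whenever $p\in H^{1/2}(\varGamma)$ and $q\in H^{3/2}(\varGamma)$, and there the right-hand side is meaningful because the preceding elliptic regularity statement gives $D_\varOmega q\in H^{1/2}(\varGamma)$.

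Next I would construct the extension as the transpose of the restricted operator. Writing $A=D_\varOmega|_{H^{3/2}(\varGamma)}:H^{3/2}(\varGamma)\to H^{1/2}(\varGamma)$, which is bounded by elliptic regularity, its transpose $A':H^{-1/2}(\varGamma)\to H^{-3/2}(\varGamma)$ is bounded and satisfies by definition $\langle A'p,q\rangle_{-\frac32,\frac32}=\langle p,Aq\rangle_{-\frac12,\frac12}=\langle p,D_\varOmega q\rangle_{-\frac12,\frac12}$ for all $p\in H^{-1/2}(\varGamma)$ and $q\in H^{3/2}(\varGamma)$. Here the duality identifications are those fixed by the chain of dense continuous inclusions $H^{3/2}(\varGamma)\subset H^{1/2}(\varGamma)\subset L^2(\varGamma)\subset H^{-1/2}(\varGamma)\subset H^{-3/2}(\varGamma)$ with $L^2(\varGamma)$ as pivot, which is precisely what guarantees that $A'$ lands in $H^{-3/2}(\varGamma)$.

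It then remains to check that $A'$ genuinely extends $D_\varOmega$, i.e. that $A'p=D_\varOmega p$ for $p\in H^{1/2}(\varGamma)\subset H^{-1/2}(\varGamma)$. For such $p$ and any $q\in H^{3/2}(\varGamma)$ the base case yields $\langle A'p,q\rangle_{-\frac32,\frac32}=\langle p,D_\varOmega q\rangle_{-\frac12,\frac12}=\langle D_\varOmega p,q\rangle$; since $A'p$ and $D_\varOmega p$ both lie in $H^{-3/2}(\varGamma)$ (the latter via $H^{-1/2}\hookrightarrow H^{-3/2}$) and induce the same functional on the whole of $H^{3/2}(\varGamma)$, nondegeneracy of the pairing forces $A'p=D_\varOmega p$. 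Because $H^{1/2}(\varGamma)$ is dense in $H^{-1/2}(\varGamma)$ and $A'$ is continuous, $A'$ is the unique continuous extension, which I simply rename $D_\varOmega$; the asserted identity $\langle D_\varOmega p,q\rangle_{-\frac32,\frac32}=\langle p,D_\varOmega q\rangle_{-\frac12,\frac12}$ is then exactly the defining relation of $A'$. The only genuinely delicate point is the bookkeeping: one must ensure that every duality bracket is induced by the same $L^2(\varGamma)$ pivot, so that the transpose of $A$ maps into the correct dual space and the base-case symmetry transfers with no spurious adjoint or conjugation. The analytic inputs—boundedness $H^{3/2}(\varGamma)\to H^{1/2}(\varGamma)$ and the symmetric Dirichlet form—are already in hand.
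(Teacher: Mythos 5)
Your proof is correct and follows exactly the route the paper intends: the paper records the symmetric Dirichlet-form identity $\langle D_\varOmega p,q\rangle=-(\nabla u_p,\nabla u_q)_{L^2(\varOmega)}$ and the elliptic-regularity mapping property $H^{3/2}(\varGamma)\to H^{1/2}(\varGamma)$, and then simply writes ``We deduce'' the proposition; your transpose-plus-density argument is precisely the deduction left implicit there. There are no gaps --- the bookkeeping of the $L^2(\varGamma)$ pivot identifications, which you handle explicitly, is the only point the paper glosses over.
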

\section{The harmonic Dirichlet problem in $L^2$}
Let $\varOmega$ be a $\mathcal C^{1,1}$ bounded domain,  denote by $\varGamma$ its boundary  and define $V=H^2(\varOmega)\cap H^1_0(\varOmega)$. For any 
$q\in H^{1/2}(\varGamma)$ define $u_q$ as the unique function in $V$ achieving:
$$\min\big\{\|\Delta\theta\|_{L^2(\varOmega)}\,:\,\gamma_\varGamma^n\theta=q\big\}.$$
Then define the operator $T:H^{1/2}(\varGamma)\longrightarrow H^{-1/2}(\varGamma)$, $Tq=\gamma_\varGamma^d\Delta u_q$. 
For every $q,p\in H^{1/2}(\varGamma)$, we have:
$$\langle Tq,p\rangle_{-\frac12,\frac12}=-\big(\Delta u_q,\Delta u_p\big)_{L^2(\varOmega)},$$
and therefore $T$ is an isomorphism from $H^{1/2}(\varGamma)$ onto $H^{-1/2}(\varGamma)$.
\begin{prop}
\label{PROP:L2H}
For every $p\in H^{-1/2}(\varGamma)$, there exists a unique function $v$ in $L^2(\varOmega)$, harmonic in $\varOmega$ 
and such that $\gamma_\varGamma^d v=p$.
\end{prop}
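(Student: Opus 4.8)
The plan is to obtain existence directly from the isomorphism $T$ introduced above, and to settle uniqueness by a duality (transposition) argument against the standard Dirichlet problem for $\Delta$. For existence, given $p\in H^{-1/2}(\varGamma)$ I would set $q=T^{-1}p\in H^{1/2}(\varGamma)$ and define $v=\Delta u_q$, which lies in $L^2(\varOmega)$ since $u_q\in V\subset H^2(\varOmega)$. The first thing to check is that $v$ is harmonic: the admissible set in the minimization defining $u_q$ is the affine space $u_q+H^2_0(\varOmega)$, so the first-order optimality condition reads $(\Delta u_q,\Delta\phi)_{L^2(\varOmega)}=0$ for every $\phi\in H^2_0(\varOmega)$, which is exactly $\Delta^2 u_q=0$ in $\mathscr D'(\varOmega)$, hence $\Delta v=0$. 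Finally $\gamma_\varGamma^d v=\gamma_\varGamma^d\Delta u_q=Tq=p$ by the very definition of $T$, so $v$ solves the problem.

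For uniqueness it suffices to show that a harmonic $w\in L^2(\varOmega)$ with $\gamma_\varGamma^d w=0$ must vanish. Here I would use that $\Delta:V\to L^2(\varOmega)$ is an isomorphism (unique solvability of the Dirichlet problem together with $\mathcal C^{1,1}$ elliptic regularity): given an arbitrary $f\in L^2(\varOmega)$, let $\phi\in V$ solve $\Delta\phi=f$. A generalized Green identity should then yield
$$\big(w,f\big)_{L^2(\varOmega)}=\big(w,\Delta\phi\big)_{L^2(\varOmega)}=\big\langle\gamma_\varGamma^d w,\gamma_\varGamma^n\phi\big\rangle_{-\frac12,\frac12}=0,$$
the boundary contribution carrying $\gamma_\varGamma^d\phi$ dropping out because $\phi\in H^1_0(\varOmega)$ (up to the orientation convention for $n$). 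Since $f$ is arbitrary, $w=0$.

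The delicate point is the Green identity at this low level of regularity: a merely $L^2$ function has no trace in general, so one must first justify that a harmonic $w\in L^2(\varOmega)$ possesses a Dirichlet trace $\gamma_\varGamma^d w\in H^{-1/2}(\varGamma)$ and that integration by parts against $\phi\in V$ is licit. I would handle this through the transposition (very weak solution) framework: define $\gamma_\varGamma^d w$ precisely through the pairing $f\mapsto\big(w,\Delta^{-1}f\big)_{L^2(\varOmega)}$, which is a bounded functional on $L^2(\varOmega)$ and hence, via the isomorphism $\Delta:V\to L^2(\varOmega)$, a bounded functional of $\gamma_\varGamma^n\phi\in H^{1/2}(\varGamma)$. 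This renders the identity above tautological and, by the Riesz representation theorem, re-proves existence and uniqueness of $v\in L^2(\varOmega)$ simultaneously; a density argument approximating $w$ by smooth harmonic functions would be an equally valid route. This trace and integration-by-parts step at the $L^2$ level is the main obstacle; once it is in place the rest is routine.
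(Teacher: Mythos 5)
Your proof is correct and follows essentially the same route as the paper: existence via $v=\Delta u_{T^{-1}p}$ (harmonicity from the first-order optimality condition of the minimization, the boundary condition from the definition of $T$), and uniqueness by pairing the harmonic function against $\Delta\phi$ with $\phi\in V$ and integrating by parts, the boundary terms vanishing because $\gamma_\varGamma^d w=0$ and $\phi\in H^1_0(\varOmega)$. The paper's only (cosmetic) difference is that it tests against the single $\phi\in V$ solving $\Delta\phi=w$, obtaining $\|\Delta\phi\|_{L^2(\varOmega)}^2=0$ directly, rather than against all $f\in L^2(\varOmega)$; your explicit discussion of how the trace of an $L^2$ harmonic function is defined by transposition fills in a point the paper leaves implicit.
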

\begin{proof}
The function $v=\Delta u_{T^{-1}p}$ satisfies the conditions. To prove uniqueness, assume that $v$ is in $L^2(\varOmega)$, harmonic 
in $\varOmega$ and such that 
$\gamma_\varGamma^d v=0$. Let $u$ be the unique solution in  $H^1_0(\varOmega)$ 
to the problem $\Delta u=v$. By elliptic regularity, $u$ is in $V$ and, integrating by parts, $\|\Delta u\|_{L^2(\varOmega)}^2=0$. 
The conclusion follows.
\end{proof}
Note 
that in a $\mathcal C^{0,1}$ domain, uniqueness is not guaranteed (an example of square integrable non-zero harmonic function with vanishing Dirichlet 
boundary conditions in a $\mathcal C^{0,1}$ bounded domain is provided in \cite{Dauge:1987tg}).
%
\end{appendix}

\end{document}